\def\rr{{\mathbb R}}
\def\rn{{{\rr}^n}}
\def\nn{{\mathbb N}}
\def\fz{\infty}
\def\az{\alpha}
\def\dist{{\mathop\mathrm{\,dist\,}}}
\def\loc{{\mathop\mathrm{\,loc\,}}}
\def\lz{\lambda}
\def\dz{\delta}
\def\ez{\epsilon}
\def\bz{\beta}
\def\gz{{\gamma}}
\def\wz{\widetilde}
\def\esup{\mathop\mathrm{\,esssup\,}}
\newtheorem{thm}{Theorem}[section]
\newtheorem{lem}[thm]{Lemma}
\newtheorem{prop}[thm]{Proposition}
\newtheorem{rem}[thm]{Remark}
\newtheorem{cor}[thm]{Corollary}
\newtheorem{defn}[thm]{Definition}
\numberwithin{equation}{section}
\begin{document}
\arraycolsep=1pt

\title[Absolute minimizers  for convex Hamiltonians]
{Regularity of absolute minimizers for continuous convex Hamiltonians}

\author{ Peng Fa, Changyou Wang, and Yuan Zhou}

          \address{ Department of Mathematics, Beihang University, Beijing 100191, P.R. China}
                    \email{SY1609131@buaa.edu.cn }

                      \address{ Department of Mathematics, Purdue University,
 West Lafayette, IN 47907, USA}
                    \email{wang2482@purdue.edu}

\address{ Department of Mathematics, Beihang University, Beijing 100191, P.R. China}
                    \email{yuanzhou@buaa.edu.cn}

\thanks{    }

\date{\today }

 \begin{abstract}
 {For any $n\ge 2$, $\Omega\subset\rn$,  and any given convex and coercive Hamiltonian function $H\in C^{0}(\rn)$,
we find an optimal sufficient condition on $H$, that is,  for any $c\in\mathbb R$, the level set $H^{-1}(c)$ does not contains any line segment, such
then any absolute minimizer $u\in AM_H(\Omega)$ enjoys the linear approximation property.
As consequences, we show that when $n=2$, if $u\in AM_H(\Omega)$ then $u\in C^1$; and  if $u\in AM_H(\rr^2)$
satisfies a linear growth at the infinity, then $u$ is a linear function on $\rr^2$.  
In particular, if $H$ is a strictly convex Banach norm $\|\cdot\|$ on $\mathbb R^2$,
e.g. the $l_\alpha$-norm  for $1<\alpha<1$, then any $u\in AM_H(\Omega)$ is $C^1$.
The ideas of proof are, instead of PDE approaches,  purely variational and geometric.

}
\end{abstract}
\maketitle
\tableofcontents
\section{Introduction}

For $ n\ge2$, assume that $H\in C^0(\rn)$ is a continuous function that
is convex, and coercive, i.e.,
$$\displaystyle\lim_{|p|\to\fz}H(p) =\fz.$$
In a series of papers \cite{a1,a2,a3,A1968,a4},
G. Aronsson initiated the study of  minimization problems involving
the supremum norm (or $L^\infty$) functional:
$${\mathcal F}_{H}\left(u,\Omega\right)=\esup_{x\in\Omega}H\left(Du (x) \right),$$
$\mbox{for any domain $\Omega\subset\rn$ and function $u\in W^{1,\fz}\left(\Omega\right)$}.$

According to Aronsson \cite{a1,a2}, given a domain $\Omega\subset\rn$
a function $u\in W^{1,\infty}_{\rm{loc}}\left(\Omega\right)$ is called an absolute minimizer  of $H$,
or $u\in AM_H\left(\Omega\right)$ for brevity,  if
$${\mathcal F}_{H}\left(u,V\right)\le {\mathcal F}_{H}\left(v,V\right),$$  $\mbox{whenever $V\Subset\Omega$, $v\in W^{1,\infty}_{\loc}\left(V\right)\cap C (\overline V )$ satisfies $v=u$ on $\partial V$}.$

According to Crandall-Evans \cite{ce},   a function $u\in C^0\left(\Omega\right)$ is said to enjoy the linear approximation property, if for  any $x\in \Omega$ and any sequence $\{r_i\}_{i\in\nn}\rightarrow 0$,
there exist a subsequence $\{r_{i_k}\}_{k\in\nn}$
and a vector $e\in \rn$
such that
\begin{equation}\label{LAP}\lim_{k\to\fz}\sup_{y\in B\left(0,1\right)}\big|\frac{u (x+r_{i_k}y )-u\left(x\right)}{r_{i_k}} -  e\cdot y \big|=0.
\end{equation}
We let $\mathscr D u(x)$ denote the collection of all possible vector $e\in\rn$
appearing in \eqref{LAP}. It is readily seen that $u$ is differentiable at $x$ if and only if  $\mathscr D u(x)=\{D u(x)\}$ is a singleton.

A Lipschitz function does not necessarily satisfy the linear approximation property, for example if $u(x)=|x|$ then $\mathscr Du(0)=\emptyset$.
The following example,  due to D. Preiss,
 $$w(x_1,x_1')=\left\{\begin{array}{ll}  x_1 \sin\Big(\log \big| \log |x_1|\big|\Big),\quad&
   0<|x_1|<1,\ x_1'\in\rr^{n-1},\\
   0&  x_1=0,\ x_1'\in\rr^{n-1}, \end{array}\right.
   $$
indicates that a Lipschitz function, satisfying the linear approximation property \eqref{LAP}, may not be differentiable.

It was first shown by \cite{ce} that if $u\in W^{1,\infty}_{\rm{loc}}(\Omega)$ is an absolutely minimal Lipschitz extension (AMLE),
or equivalently an absolute minimizer of $H(p)=|p|^2$, then it
satisfies the linear approximation property \eqref{LAP} for any $x\in\Omega$, and
\begin{equation}\label{norm-grad}
|e|=S^+u(x)=\lim_{r\rightarrow 0} \frac{1}{r}\big(\max_{B_r(x)}u -u(x)\big), \ \forall e\in\mathscr Du(x).
\end{equation}
Wang-Yu \cite{wy} estabilshed the linear approximation property \eqref{LAP} for
$u\in AM_H(\Omega)$ at any $x\in\Omega$, and
\begin{equation}\label{slope}
H(e)=\lim_{r\rightarrow 0} \big\|H(D u)\big\|_{L^\infty(B_r(x))}, \ \forall e\in\mathscr Du(x),
\end{equation}
for any nonnegative, uniformly convex, and coercive $H\in C^2(\rn)$.

The linear approximation property \eqref{LAP} and \eqref{norm-grad} of an AMLE, or an infinity harmonic function,
$u\in C^{0,1}(\Omega)$, has played an important role in establishing
its $C^{1,\alpha}$-regularity by Savin \cite{s05} and Evans-Savin \cite{es08}
in dimension $n=2$, and its differentiability by Evans-Smart \cite{es11a,es11b}
in dimensions $n\ge 3$, see also \cite{wy}. \eqref{LAP} and \eqref{slope} has also played an important role for the $C^1$-regularity of absolute minimizers $u\in AM_H(\Omega)$ for $C^2$-uniformly
convex $H$ in dimension $n=2$ by Wang-Yu \cite{wy}.

The notion of absolute minimizers can be defined for any continuous
Hamiltonian $H$. It has been an outstanding open question whether the
the linear approximation property holds for $u\in AM_H(\Omega)$, if
we weaken the assumption that $H\in C^2(\rn)$ is uniformly convex to the natural condition
that $H\in C^0(\rn)$ is convex.

The main contribution of this paper gives an affirmative
answer to this problem by showing the linear approximation property
of absolute minimizers, provided $H\in C^0(\rn)$ is a convex function
whose level set does not contain any line segment. As consequences,
we are able to establish both the $C^1$-regularity and a Liouville property of absolute minimizers
for any such $H$ in dimension two.  More precisely, we have

\begin{thm}\label{lla}  For $n\ge 2$, if $H\in C^{0}(\rn)$ is convex and coercive, and satisfies

\noindent {\rm ({\bf A})} the level set $H^{-1}(c)$ does not contain any line segment
for any $c\in\mathbb R$,

\noindent then

\noindent {\rm ({\bf B})} any $u\in AM_H(\Omega)$, $\Omega\subset\rn$,
satisfies the linear approximation property \eqref{LAP} and \eqref{slope}.
\end{thm}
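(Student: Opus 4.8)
The plan is to argue entirely through the variational/geometric machinery of absolute minimizers, reducing everything to a blow-up analysis controlled by comparison with cones. First I would fix $x\in\Omega$ and consider the rescaled functions $u_r(y)=\frac{u(x+ry)-u(x)}{r}$ on the unit ball. Since $u\in AM_H(\Omega)$ is locally Lipschitz, the family $\{u_r\}$ is uniformly Lipschitz near the origin, so along any sequence $r_i\to0$ some subsequence converges locally uniformly to a limit $u_0$; the core task is to show $u_0(y)=e\cdot y$ is affine, and that the slope $|e|$ (measured in the ``dual'' sense) equals $L:=\lim_{r\to0}\|H(Du)\|_{L^\infty(B_r(x))}$. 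The number $L$ is well defined because $r\mapsto\|H(Du)\|_{L^\infty(B_r(x))}$ is monotone, a standard consequence of the absolute minimizing property (comparison with the constant, or with linear, competitors). A scaling computation shows each blow-up limit $u_0$ is itself an absolute minimizer on $\rn$ with $\|H(Du_0)\|_{L^\infty}\le L$, and in fact $=L$ on every ball by the monotonicity transferred to the limit.

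Next I would extract the geometric content of hypothesis (A). Because $H$ is convex and coercive, the sublevel set $K_L=\{p:H(p)\le L\}$ is a compact convex body; a function $v$ with $\|H(Dv)\|_{L^\infty}\le L$ is exactly a function that is $1$-Lipschitz with respect to the (possibly asymmetric) ``norm'' whose unit ball is the polar body $K_L^\circ$. Hypothesis (A) says precisely that $\partial K_L$ contains no segment, i.e. $K_L$ is \emph{strictly convex}; dually this means the gauge of $K_L^\circ$ is \emph{smooth} (differentiable away from the origin) — equivalently, it has no ``corners''. The point of strict convexity of $K_L$ is that it forces the extremal $1$-Lipschitz functions on large balls to be the linear ones: if $v$ is an absolute minimizer on $B_R$ with $\|H(Dv)\|_\infty= L$ and $v$ attains, near the center, the maximal slope $L$ along \emph{two} chords pointing in directions whose gradients $p_1\ne p_2$ both lie on $\partial K_L$, strict convexity of $K_L$ is contradicted unless $p_1=p_2$. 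This is the mechanism by which ``no segments in level sets'' upgrades comparison-with-cones to comparison-with-planes.

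The heart of the argument is then a comparison-with-cones statement adapted to the general convex $H$: for $u\in AM_H(\Omega)$ one shows the two one-sided slopes
\begin{equation}\label{slopes}
S^+_H u(x)=\lim_{r\to0}\sup_{|y|\le1}\frac{u(x+ry)-u(x)}{r},\qquad
S^-_H u(x)=\lim_{r\to0}\sup_{|y|\le1}\frac{u(x)-u(x-ry)}{r}
\end{equation}
exist, are equal, and coincide with the ``radius'' in which $L$ places $Du$, namely with the support function value realizing $H(p)=L$. One proves this by the classical Aronsson/Jensen comparison scheme: absolute minimizers satisfy comparison with cones built from the dual gauge of $K_c$, the increasing-slope function is monotone, the decreasing-slope function is monotone, and they meet in the limit. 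Armed with $S^+_Hu(x)=S^-_Hu(x)=L$, one revisits the blow-up limit $u_0$: it achieves slope exactly $L$ at the origin from both sides, so for a.e. point along an optimal segment through $0$ the gradient $Du_0$ lies on $\partial K_L$; combined with the fact that $u_0$ is an absolute minimizer on all of $\rn$ with this maximal slope and with the strict convexity of $K_L$ from step two, one concludes $Du_0$ is the \emph{constant} $e$ on a neighborhood of $0$, forcing $u_0(y)=e\cdot y$ and $H(e)=L$. Since this holds for every subsequential blow-up, we get \eqref{LAP} with the identification \eqref{slope}.

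\textbf{Main obstacle.} The principal difficulty is that with merely $H\in C^0$ convex — no $C^2$, no uniform convexity — one cannot invoke the $H$-infinity-Laplace PDE or viscosity-solution techniques that underlie \cite{ce,wy}; every step must be recast as an honest comparison argument with explicitly constructed competitors (cones and planes) built from the polar body $K_c^\circ$, whose gauge is only Lipschitz and, under (A), merely $C^1$ rather than $C^{1,1}$. In particular, establishing that the increasing slope function $r\mapsto r^{-1}(\max_{\partial B_r(x)}u-u(x))$ is monotone and that its limit equals the decreasing one, \emph{in this low-regularity asymmetric-norm setting}, and then pushing this through the blow-up while keeping track of which points have $Du\in\partial K_L$, is the technical crux; hypothesis (A) is exactly what is needed — and, as the abstract asserts, is optimal — to close the gap between ``slope $L$ is achieved'' and ``the gradient is locally constant.''
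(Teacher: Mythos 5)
Your overall skeleton coincides with the paper's: rescale $u$ at a point, extract a locally uniform subsequential limit, check that the limit inherits a two\-/sided maximal slope condition at the origin together with the global slope bound, and then conclude that the limit is linear with $H(e)=\lim_{r\to0}\|H(Du)\|_{L^\infty(B_r(x))}$. The paper does exactly this in the proof of ({\bf A})$\Rightarrow$({\bf B}), using the ACJS convexity/concavity criteria and the Hamilton--Jacobi flow slopes $S^\pm_t$ to verify the hypotheses of its Theorem 5.1. But that last step --- the Liouville\-/type rigidity asserting that a global Lipschitz function with $S^+_tu(0)=-S^-_tu(0)=k$ and $\max\{S^+_tu,-S^-_tu\}\le k$ everywhere must be affine with $H(p_0)=k$ --- is the entire technical content of the theorem, and your proposal does not prove it. The mechanism you offer (``two chords with gradients $p_1\ne p_2$ on $\partial K_L$ contradict strict convexity of $K_L$'') is not an argument: strict convexity of the sublevel set in no way prevents an absolute minimizer of maximal slope from having its gradient take many different values on $\partial K_L$ at different points, and nothing in your sketch produces the ``optimal segments'' through $0$, shows $Du_0\in\partial K_L$ a.e.\ along them, or passes from that to local constancy of $Du_0$. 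Note also that \eqref{slope} is a statement about $H(e)$ versus $\lim_r\|H(Du)\|_{L^\infty(B_r)}$, not about the Euclidean slopes in your displayed definition, so even the identification you aim for is stated loosely.

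The reason this gap is serious, and not a routine fill\-/in, is precisely the phenomenon the paper is built around: under (A) alone the conjugate $L=H^*$ need be neither $C^1$ nor strictly convex, so the Crandall--Evans, Wang--Yu and Yu arguments you implicitly lean on (which exploit a Hilbert structure, $C^2$ uniform convexity, or strict convexity plus $C^1$ of $L$) are unavailable. The paper replaces them by (a) the structure theorem for $\partial L(q)$ under (A) --- it is either a singleton or a line segment on which $H$ is strictly monotone (Proposition 3.1) --- and (b) a construction via the Hamilton--Jacobi flow at large times of directions $y^\pm$ along which the blow\-/up is affine, linearity on the planar sector they span, a translation limit $v(z)=\lim_s u(x_s+z)$ handled by an auxiliary lemma, and a four\-/case analysis according to whether $\partial L(y^+)$, $\partial L(-y^-)$ are singletons or segments (using that such a segment cannot be perpendicular to the relevant direction because $H$ is strictly monotone on it). None of this structure appears in your proposal; your appeal to smoothness of the gauge of $K_L^\circ$ (i.e.\ differentiability of the cone functions away from $0$, which does follow from (A)) is never used in a way that substitutes for it. So the proposal is a correct plan for the easy outer layer of the proof, but the core rigidity statement is asserted rather than established, and the heuristic offered in its place would not close it.
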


As an immediate application, we obtain both $C^1$-regularity and a Liouville property of absolute minimizers of $H\in C^0(\mathbb R^n)$ satisfying the condition ({\bf A})
of Theorem \ref{lla} for $n=2$.  More precisely, we have

\begin{thm}\label{lla0}
Assume $H\in C^{0}(\mathbb R^2)$ is convex  and coercive, and satisfies
condition ({\bf A}) of Theorem \ref{lla}. Then

\noindent{\rm({\bf C})} $u\in AM_H(\Omega)$, $\Omega\subset\rr^2$, is in $C^1(\Omega)$; and

\noindent{\rm({\bf D})}  if $\Omega=\rr^2$ and $u\in AM_H(\rr^2)$  has a linear growth at the infinity,
then $u$ is a  linear function in $\rr^2$.
\end{thm}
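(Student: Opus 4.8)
The plan is to derive Theorem \ref{lla0} as a corollary of Theorem \ref{lla}, whose conclusion ({\bf B}) — the linear approximation property \eqref{LAP} together with the slope identity \eqref{slope} — is now available to us in dimension $n=2$. For part ({\bf C}), fix $x\in\Omega$ and suppose toward a contradiction that $u$ is not differentiable at $x$, so that $\mathscr D u(x)$ contains at least two distinct vectors $e_1\neq e_2$. By ({\bf B}) both satisfy $H(e_1)=H(e_2)=\lambda$, where $\lambda:=\lim_{r\to 0}\|H(Du)\|_{L^\infty(B_r(x))}$. The key extra ingredient, which I expect is supplied by the body of the paper, is a compactness/rigidity statement for blow-ups: any blow-up limit of $u$ at $x$ along a sequence $r_i\to 0$ is, up to the additive constant $u(x)$, a linear function $e\cdot y$ with $e\in\mathscr D u(x)$, and moreover $\mathscr D u(x)$ is a connected subset of the sphere-like level set $\{H=\lambda\}$ — one typically gets connectedness because $\mathscr D u(x)$ is the set of subsequential limits of the single-valued ``difference quotient flow'' $e(r)$ as $r\to 0$, hence a continuum. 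Then $\mathscr Du(x)$ is a nondegenerate connected subset of $H^{-1}(\lambda)$, which in particular contains a nontrivial path and therefore (in a convex level set in $\mathbb R^2$) forces a line segment inside $H^{-1}(\lambda)$, contradicting hypothesis ({\bf A}). This gives $\mathscr D u(x)=\{Du(x)\}$, i.e. differentiability at every $x$; continuity of $Du$ then follows from the standard fact that pointwise differentiability plus the quantitative closeness in \eqref{LAP} (with a rate that is locally uniform, coming from the local Lipschitz bound and the slope identity) upgrades to $Du\in C^0(\Omega)$.

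For part ({\bf D}), assume $\Omega=\mathbb R^2$ and $u\in AM_H(\mathbb R^2)$ has linear growth at infinity, i.e. $|u(x)|\le C(1+|x|)$. The plan is a blow-down argument mirroring the blow-up in ({\bf C}). Define $u_R(y):=u(Ry)/R$; the linear growth bound makes $\{u_R\}_{R\ge 1}$ locally uniformly bounded, and the absolute-minimizing property is scale invariant, so (after passing to a subsequence $R_j\to\infty$ and using local Lipschitz estimates for absolute minimizers of a fixed coercive convex $H$) we obtain a limit $u_\infty\in AM_H(\mathbb R^2)$. By ({\bf C}) applied to $u$ on all of $\mathbb R^2$, $u\in C^1(\mathbb R^2)$, and one checks that the blow-down limit $u_\infty$ must itself be everywhere differentiable with constant gradient — equivalently, that $S^+u$ and the slope function $r\mapsto\|H(Du)\|_{L^\infty(B_r)}$ stabilize as $r\to\infty$. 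The cleanest route is: the nonnegative function $\rho\mapsto \|H(Du)\|_{L^\infty(B_\rho(x_0))}$ is nondecreasing in $\rho$ (comparison with linear competitors, a standard consequence of the absolute-minimizing property), and the linear growth forces it to be bounded, hence it has a finite limit $\lambda_\infty$ as $\rho\to\infty$; a comparison argument then shows $H(Du)\equiv\lambda_\infty$ a.e. and that $u$ is ``$\lambda_\infty$-tight'' on every segment. Combined with ({\bf C}), $Du$ is continuous and $u$ is affine along a family of lines; the rigidity of the level set $H^{-1}(\lambda_\infty)$ from ({\bf A}) pins down $Du$ to a single value $e_0$, and hence $u(x)=e_0\cdot x + \text{const}$.

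The main obstacle I anticipate is the blow-down step in ({\bf D}): one must ensure the limit $u_\infty$ is genuinely linear rather than merely an absolute minimizer of linear growth, and the honest way to do this is to show that linear growth forces the slope/comparison quantities to be constant at scale infinity, which is essentially the statement that there are no nontrivial ``cones at infinity.'' Here condition ({\bf A}) is exactly what rules out the degenerate alternative — without it, $H^{-1}(\lambda_\infty)$ could contain a segment and one could build non-linear linearly-growing absolute minimizers (e.g. genuine Aronsson-type solutions for $H=\|\cdot\|_\infty$). So the proof of ({\bf D}) should be organized as: (i) monotonicity and finiteness of the slope functional at large scales via comparison with linear functions; (ii) identify the blow-down limit as a global absolute minimizer with constant slope $\lambda_\infty$; (iii) invoke the linear approximation property \eqref{LAP}–\eqref{slope} and ({\bf A}) to conclude the limit is a fixed linear function $e_0\cdot y$; (iv) transfer back, using that $u$ has linear growth and that the blow-down along \emph{every} subsequence gives the \emph{same} $e_0$ (because the limiting slope is unique), to deduce $Du\equiv e_0$ on $\mathbb R^2$, hence $u$ is linear. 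Part ({\bf C}) is comparatively routine once Theorem \ref{lla} is in hand, the only subtlety being the connectedness of $\mathscr D u(x)$, which I would record as a short lemma: $\mathscr D u(x)$ is the set of limit points of a continuous selection $r\mapsto e(r)$ of near-gradients and is therefore connected, so in $\mathbb R^2$ it is either a point or contains an arc in $H^{-1}(\lambda)$, the latter contradicting ({\bf A}).
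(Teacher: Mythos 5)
Your proof of ({\bf C}) breaks down at the step where you derive a contradiction with ({\bf A}). Even granting connectedness of $\mathscr D u(x)$ (for which there is no canonical continuous selection $r\mapsto e(r)$, so this too would need an argument), a nondegenerate connected subset of the level set $H^{-1}(\lambda)\subset\rr^2$ does \emph{not} force a line segment inside $H^{-1}(\lambda)$: under hypothesis ({\bf A}) that level set is a strictly convex closed curve, and any arc of it is a nondegenerate continuum containing no segment (take $H(p)=|p|^2$, $\lambda=1$: an arc of the unit circle). So no contradiction with ({\bf A}) arises. Indeed, if your argument worked it would give a two-line proof of Savin's theorem, since Crandall--Evans had already established the linear approximation property for $H(p)=|p|^2$, yet $C^1$ regularity of infinity harmonic functions in the plane required Savin's planar topology argument. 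This is exactly why the paper does not deduce ({\bf C}) from Theorem \ref{lla} alone: it proves the quantitative slope-uniqueness statement Theorem \ref{xprop} (any $\dz$-approximate slope $e_8$ at scale $8$ is $\ez$-close to every $e\in\mathscr Du(0)$), whose proof uses Savin-type topology (Lemma \ref{xcon-com}), a length estimate via a discrete gradient flow (Lemma \ref{xlength}), and the angle estimates (Lemmas \ref{xangle}, \ref{xangler}) together with the analytic characterizations of ({\bf A}) in Propositions \ref{e-e6} and \ref{LEM4.5}. Your final remark that continuity of $Du$ ``follows from the standard fact'' that the rate in \eqref{LAP} is locally uniform hides the same difficulty: that uniformity is precisely Theorem \ref{c2}(ii), obtained by a compactness argument from Theorem \ref{xprop}, and it is not automatic (pointwise differentiability without $C^1$ is the state of the art for $n\ge 3$).

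Concerning ({\bf D}), the paper's route is shorter and avoids your blow-down: linear growth gives $k:=\|H(Du)\|_{L^\fz(\rr^2)}<\fz$ by Corollary \ref{llg}, and then the scale-invariant modulus $\rho_k$ of Theorem \ref{c2}(ii), applied to $u_R(x)=u(Rx)/R$, yields $|Du(x)-Du(0)|\le\limsup_{R\to\fz}\rho_k(|x|/R)=0$, so $u$ is linear. Your outline, besides resting on the unproved ({\bf C}), has its own gaps: the assertion that a comparison argument gives $H(Du)\equiv\lambda_\fz$ a.e.\ is unjustified, and uniqueness of the limiting value $\lambda_\fz$ of the slope does not determine the direction $e_0$, so different blow-down subsequences could a priori produce different linear limits; excluding that requires exactly the quantitative estimate (Theorem \ref{xprop}/Theorem \ref{c2}(ii)) that your argument never establishes.
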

Recall a function $u\in C^0(\rn)$ has a linear growth
at the infinity, if  there exists a constant $C>0$ such that
\begin{equation} \label{LG}
 |u(x)|\le C(1+|x|), \ \forall x\in\rn.
 \end{equation}
({\bf D}) of Theorem \ref{lla0} is usually referred  as a Liouville property.

 {We remark that for all $n\ge 2$, the condition ({\bf A}) is optimal (necessary in some sense) for the properties ({\bf B}), ({\bf C}) or  ({\bf D})
as in Theorems \ref{lla} and \ref{lla0}. In fact, if ({\bf A}) of Theorem \ref{lla} were false, then there would
exist a $c\in \mathbb R$ and a line segment $[a, b]\subset \rn$, with $a\not=b$,
such that $H(y)=c$ for any $y\in [a,b]$. Then we can modify an example by
Katzourakis \cite{k11} to construct an $u\in AM_H(\Omega)$, which satisfies
none of the properties ({\bf B}), ({\bf C}), and ({\bf D}) stated in Theorems \ref{lla} and \ref{lla0},
see Section 4 for details.}

Let $(\rn, \|\cdot\|)$ be a Banach space and $H(p)=\|p\|$, for $p\in\rn$,
be the Banach norm.
Then it is not hard to see that the following three statements are equivalent:\\
(1) $H^{-1}(c)$ does not contain line segments for any $c\ge 0$.\\
(2) The unit sphere $\big\{p\in\rn: \|p\|=1\big\}\subset\rn$ does not contain any line-segment.\\
(3) $H(p)=\|p\|$ is strictly convex.\\
In particular, if we consider the $l_\alpha$-norm on
$\rn$ ($n\ge 2$):
\begin{equation}\label{alpha}
|p|_{\alpha}
=\begin{cases} \displaystyle\Big(\sum_{i=1}^n|p_i|^\alpha \Big)^{\frac1{\alpha}},
& 1\le \alpha<\infty,\\
\displaystyle\max_{i=1}^n |p_i|, & \alpha=\infty,
\end{cases}
\end{equation}
and define $H(p)=|p|_{\alpha}$, then $H$ satisfies the condition
({\bf A}) of Theorem \ref{lla} if and only if $1<\alpha<\infty$.

As an immediate consequence of Theorem \ref{lla} and Theorem \ref{lla0},
we have
\begin{cor} \label{lla1} For $n\ge 2$, if a Banach norm $\|\cdot\|$ in $\rr^n$
is strictly convex, then
any absolute minimizer $u\in AM_H(\Omega)$ for $H(p)=\|p\|$ satisfies the linear approximation property \eqref{LAP} and \eqref{slope}.  For $n=2$, we have that

\begin{itemize}
\item [a)] any absolute minimizer $u\in AM_H(\Omega)$ of $H(p)=\|p\|$ is
in $C^1(\Omega)$, and
\item [b)] any absolute minimizer $u\in AM_H(\mathbb R^2)$ of $H(p)=\|p\|$,
satisfying the linear growth condition \eqref{LG}, must be a linear function.
\end{itemize}
\end{cor}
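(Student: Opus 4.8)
The plan is to derive Corollary~\ref{lla1} directly from Theorems~\ref{lla} and \ref{lla0} by verifying the hypothesis ({\bf A}) in the Banach-norm setting. First I would establish the equivalence of the three statements (1)--(3) listed just before the corollary. The implication $(2)\Rightarrow(1)$ is essentially trivial: for $c>0$ the level set $H^{-1}(c)=\{p:\|p\|=c\}$ is the sphere of radius $c$, a dilate of the unit sphere, so it contains a segment iff the unit sphere does; and $H^{-1}(0)=\{0\}$ is a single point. For $(1)\Leftrightarrow(3)$ I would argue as follows. If $H=\|\cdot\|$ is not strictly convex, then there are $p\neq q$ with $\|p\|=\|q\|=1$ and $\|\tfrac{p+q}{2}\|=1$; by convexity of the norm the entire segment $[p,q]$ lies on the unit sphere, so $H^{-1}(1)$ contains a segment, contradicting (1). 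Conversely, if $H^{-1}(c)$ contains a nondegenerate segment $[a,b]$ for some $c$, then necessarily $c>0$ (since $H^{-1}(0)=\{0\}$), and rescaling by $1/c$ puts a segment on the unit sphere; for any two distinct points $p,q$ on that segment, $\|\tfrac{p+q}{2}\|=1=\tfrac12\|p\|+\tfrac12\|q\|$, so equality holds in the triangle inequality along the midpoint, which contradicts strict convexity. This shows a strictly convex Banach norm satisfies condition ({\bf A}).

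With the equivalence in hand, the corollary is immediate. Given a strictly convex Banach norm $\|\cdot\|$ on $\rr^n$, the function $H(p)=\|p\|$ is continuous (norms are continuous), convex (triangle inequality plus homogeneity), and coercive (on a finite-dimensional space all norms are equivalent, so $H(p)\ge c_0|p|\to\infty$ as $|p|\to\infty$ for some $c_0>0$). By the equivalence just proved, $H$ satisfies condition ({\bf A}). Hence Theorem~\ref{lla} applies and gives the linear approximation property \eqref{LAP} and \eqref{slope} for every $u\in AM_H(\Omega)$, $\Omega\subset\rr^n$. For $n=2$, Theorem~\ref{lla0} applies verbatim: its conclusion ({\bf C}) gives part~a), namely $u\in AM_H(\Omega)\subset C^1(\Omega)$, and its conclusion ({\bf D}) gives part~b), namely a linearly growing $u\in AM_H(\rr^2)$ is linear.

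I do not anticipate a genuine obstacle here, since this is a specialization of the main theorems rather than a new argument; the only point requiring a little care is the passage between "the unit sphere contains no segment'' and "$H$ is strictly convex,'' where one must remember that strict convexity of a norm is defined as strict convexity restricted to pairs with $\|p\|=\|q\|$ (a norm is never strictly convex as a function on all of $\rr^n$, being positively homogeneous of degree one along rays). Once that definitional subtlety is dispatched, the rest is a direct citation of Theorems~\ref{lla} and \ref{lla0}. Finally, to connect with the specific example in the statement, I would note that for $1<\alpha<\infty$ the $l_\alpha$-norm $|p|_\alpha$ in \eqref{alpha} is strictly convex --- its unit sphere has no flat pieces, as follows from strict convexity of $t\mapsto t^\alpha$ on $[0,\infty)$ --- whereas $|p|_1$ and $|p|_\infty$ are not, their unit spheres being polytopes with genuine facets; thus Corollary~\ref{lla1} covers precisely the range $1<\alpha<\infty$, consistent with the discussion following \eqref{alpha}.
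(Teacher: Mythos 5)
Your proposal is correct and follows exactly the route the paper intends: verify that a strictly convex Banach norm satisfies condition ({\bf A}) via the equivalence of statements (1)--(3) stated before the corollary, and then cite Theorems \ref{lla} and \ref{lla0} directly. The paper treats the corollary as an immediate consequence of those theorems, so your filling in of the equivalence argument (and the remark on the Banach-space sense of strict convexity) matches the paper's approach with no essential difference.
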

\begin{cor}\label{lla2} For $1<\alpha<\infty$, $\Omega\subset\mathbb R^2$,
if $H(p)=|p|_\alpha$ is the $l_\alpha$-norm, then  $u\in AM_H(\Omega)$ is in $C^1(\Omega)$.
If, in addition, $\Omega=\mathbb R^2$ and $u$ satisfies the linear growth property
\eqref{LG}, then $u$ is a linear function.
 \end{cor}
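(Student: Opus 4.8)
The plan is to deduce this from Corollary \ref{lla1}, which in turn follows from Theorems \ref{lla} and \ref{lla0}. So the only thing to check is that for $1<\alpha<\infty$ the $l_\alpha$-norm $H(p)=|p|_\alpha$ on $\rr^2$ is a convex, coercive, continuous Hamiltonian satisfying condition ({\bf A}); equivalently, by the equivalence (1)$\Leftrightarrow$(2)$\Leftrightarrow$(3) recorded just above, that $|\cdot|_\alpha$ is a strictly convex norm on $\rr^2$. Continuity, positive homogeneity, and the triangle inequality (hence convexity) of $|\cdot|_\alpha$ are the classical Minkowski inequality; coercivity is immediate since all norms on $\rr^2$ are comparable, so $|p|_\alpha\to\infty$ as $|p|\to\infty$. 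Thus the substantive point is strict convexity.

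First I would verify strict convexity of $|\cdot|_\alpha$ for $1<\alpha<\infty$ by the standard argument: the function $t\mapsto t^\alpha$ is strictly convex on $[0,\infty)$, so equality in Minkowski's inequality $\||p|+|q|\|_{\ell^\alpha}\le\|p\|_{\ell^\alpha}+\|q\|_{\ell^\alpha}$ (applied coordinatewise together with the triangle inequality $|p_i+q_i|\le|p_i|+|q_i|$) forces $p$ and $q$ to be nonnegative multiples of a common vector after taking absolute values coordinatewise, and then sign considerations force $q=\lambda p$ for some $\lambda\ge 0$. Equivalently, and perhaps cleaner for a two–dimensional statement: the unit sphere $\{p\in\rr^2:|p|_\alpha=1\}$ contains no line segment, because along any affine segment not through the origin the restriction of the strictly convex function $p\mapsto |p_1|^\alpha+|p_2|^\alpha$ is strictly convex, hence cannot be constantly equal to $1$ on a nondegenerate subsegment. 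This establishes condition ({\bf A}) for $H(p)=|p|_\alpha$ when $1<\alpha<\infty$.

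With ({\bf A}) in hand, Corollary \ref{lla1} (case $n=2$) applies verbatim: part a) of that corollary gives $u\in C^1(\Omega)$ for any $u\in AM_H(\Omega)$, $\Omega\subset\rr^2$, which is the first assertion; and part b) gives that if $\Omega=\rr^2$ and $u\in AM_H(\rr^2)$ has linear growth \eqref{LG}, then $u$ is linear, which is the second assertion. This completes the proof.

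I do not anticipate a genuine obstacle here; the corollary is purely a matter of checking that the $l_\alpha$-norm is an admissible Hamiltonian, the only mildly delicate point being to make the strict–convexity/no–segment verification precise (in particular handling the coordinate hyperplanes $p_i=0$ where $|p_i|^\alpha$ fails to be $C^2$, which is why I would phrase the argument via strict convexity of $t\mapsto t^\alpha$ on $[0,\infty)$ and equality analysis in Minkowski's inequality rather than via second derivatives).
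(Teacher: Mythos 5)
Your proposal is correct and follows essentially the same route as the paper: the paper treats Corollary \ref{lla2} as an immediate consequence of Theorems \ref{lla} and \ref{lla0} (equivalently Corollary \ref{lla1}), after noting the equivalence of condition ({\bf A}) with strict convexity of the norm and that $|\cdot|_\alpha$ is strictly convex exactly for $1<\alpha<\infty$. Your explicit verification of strict convexity (via strict convexity of $t\mapsto|t|^\alpha$, avoiding second derivatives on the coordinate axes) just fills in the detail the paper leaves as "not hard to see."
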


When $H\in C^1(\rn)$,   Aronsson \cite{a1,a2,a4}
derived the Euler-Lagrange equation, called as Aronsson's equation nowdays,
for an absolute minimizer $u\in AM_H(\Omega)$:
 \begin{equation}\label{eq1.2}
\mathscr A_H[u]:= \sum_{i,j=1}^n
H_{p_i} \left(Du\right) H_{p_j} \left(Du\right) u_{x_ix_j}     =0\quad\mbox{\rm in}\;\Omega.
\end{equation}
When  $H(p)= \frac12|p|^2$ for $p\in\rn$,
 \eqref{eq1.2} reduces to  the  $\infty$-Laplace equation
 \begin{equation}\label{eq1.3}\Delta_\infty u:=\sum_{i,j=1}^n
u_{x_i}   u_{x_j}   u_{x_ix_j} =0\quad{\rm in }\ \Omega.
\end{equation}

It is well-known that Crandall-Lions' viscosity solution
theory \cite{CIL} can be employed to study both \eqref{eq1.2} and \eqref{eq1.3}.
Jensen  \cite{j93} was the first to show that
an  AMLE is equivalent
to a viscosity solution to \eqref{eq1.3} (or an infinity
harmonic function), both of which are  unique under the Dirichlet
boundary condition $u|_{\partial\Omega}=g\in C(\partial\Omega)$,
see also \cite{as, bb,cgw,pssw} for alternate approaches to the uniqueness.
In general, when $H\in C^1(\rn)$ is convex and coercive,
through Crandall-Wang-Yu \cite{cwy}   and  Yu  \cite{y06} we know that
a viscosity solutions to   Aronsson's equation \eqref{eq1.2}
is equivalent to an absolute minimizer for $H$, see also \cite{acjs, bjw,c03,gwy06}.
Barron-Jensen-Wang \cite{bjw}
have obtained an existence result of absolute minimizers
for general $H(x,z,p)\in C(\Omega\times \mathbb R\times\rn)$, that
is level-set convex in the $p$-variable, which is a viscosity solution
of Aronsson's equation under some further assumptions
on $H$, see also \cite{c03}. The uniqueness of absolute minimizers was subsequently proved by Jensen-Wang-Yu \cite{bjw} and Armstrong-Crandall-Julin-Smart \cite{acjs}
for convex and coercive $H\in C^2(\rn)$  and $H\in C^{0}(\rn)$ respectively,
provided the minimal level set $\displaystyle H^{-1}(\displaystyle\min_{\rn}H)=\big\{p\in\rn: H(p)=\min_{\rn} H \big\}$ has an empty interior.

It is readily seen that the condition ({\bf A}) of Theorem \ref{lla} implies
that the minimal level set of $H$ has en empty interior, hence the uniqueness
holds for absolute minimizer of $H$ satisfying ({\bf A}) of Theorem \ref{lla}.
On the other hand, it is easy to construct a convex and coercive
$H\in C^0(\rn)$ such that $\big(H^{-1}({\rm{min}}_{\rn}H)\big)$ has an
empty interior, but $H^{-1}(c)$ contains a line segment for some $c\in\mathbb R$. According
to Theorem \ref{lla}, for any such a $H$ there exists an absolute minimizer $u$
that does not enjoy  \eqref{LAP} and \eqref{slope}. This indicates that the linear approximation property \eqref{LAP} and \eqref{slope} for absolute minimizers
is, in fact, stronger  than the property of uniqueness  under the Dirichlet
boundary condition.

A few remarks on our main results are in order:
\begin{rem} {\rm a) Theorem \ref{lla} and Theorem \ref{lla0} have previously
been shown by Wang-Yu \cite{wy} where $H$ is $C^2(\rn)$ and uniformly convex. Our primary advances assert that both Theorems remain to be true under the natural condition that $H\in C^0(\rn)$ is convex,
and satisfies condition ({\bf A}) of Theorem \ref{lla}. It is worthwhile pointing out that
the uniform convexity of $H$ implies condition
({\bf A}) of Theorem \ref{lla}. \\
b) If $H$ is a uniformly convex, $C^1$-function in $\rn$, then Theorem
\ref{lla} and Theorem \ref{lla0} also hold for viscosity solutions of
\eqref{eq1.2}, since an absolute minimizer of $H$ is equivalent
to a viscosity solution of \eqref{eq1.2} (see \cite{cwy} and \cite{y06}).
However, if $H$ is merely $C^0$ in $\rn$, it is unknown whether
Theorem \ref{lla} and Theorem \ref{lla0} hold for viscosity solutions
of \eqref{eq1.2}, which can be defined
by replacing $H_p(Du)$ by $q\in \partial H(Du)$ (the subdifferential of
$H$), see \cite{acjs} or c) below. It remains to be an open
question that a viscosity solution of \eqref{eq1.2} is an absolute minimizer of $H$ in this class.\\
c) For $n\ge 2$, if $H_\alpha(p)=|p|_\alpha, p\in\rn,$ is the $l_\alpha$ for $1\le\alpha\le\infty$, it follows from \cite{cgw} that an absolute minimizer $u\in C^{0,1}(\Omega)$ of $H_\alpha$ if and only if $u$ is an infinity harmonic function with respect to $l_\alpha$-norm, i.e.,
\begin{equation}\label{eq1.3'}
\begin{cases}
\forall (x, \phi) \in \Omega\times C^2(\Omega), x\ \mbox{is a local maximum of}\ u-\phi
\Longrightarrow \displaystyle\max_{q\in\partial H_\alpha(D\phi(x))}\langle D^2\phi(x)q,q\rangle\ge 0,\\
\forall (x, \phi)\in (\Omega, C^2(\Omega)), x\ \mbox{is a local minimum of}\ u-\phi
\Longrightarrow \displaystyle\min_{q\in\partial H_\alpha(D\phi(x))}\langle D^2\phi(x)q,q\rangle\le 0.
\end{cases}
\end{equation}
Corollary \ref{lla1} and Corollary \ref{lla2} imply that for any $\alpha\in (1,\infty)$,
an infinity harmonic function $u$, with the $l_\alpha$-norm, enjoys the
linearity approximation property \eqref{LAP} and \eqref{slope} for $n\ge 2$;
and is $C^1$ and enjoys the Liouville property in $\mathbb R^2$. It would be
interesting to ask whether an infinity harmonic function with the
$|\cdot|_\alpha$-norm is $C^{1,\alpha}$ in $\mathbb R^2$, and differentiable in $\rn$ for $n\ge 3$.}
\end{rem}

\subsection{Outline of ideas of proofs}

First, observe that in Theorems \ref{lla} and \ref{lla0}, we may assume
$H$ satisfies the stronger condition:
\begin{equation}\label{assh0}\mbox{$H\in C^0(\rn)$ is convex, superlinear, that is $\displaystyle\lim_{|p|\to\fz}   \frac{H\left(p\right)}{|p|} =\fz$,
and $\displaystyle H\left(0\right)=\min_{p\in\rn} H\left(p\right)=0$.}
\end{equation}
Indeed, if  $H\in C^0(\rn)$ is convex  and coercive, then
 there exists a $p_0\in\rn$ such that $\displaystyle H(p_0)=\min_{p\in\rn}H(p)$.
Set $\wz H(p)=\big(H(p+p_0)-H(p_0)\big)^2$ for $p\in\rn$.
Then it is easy to show\\
\indent 1) $\wz H$ satisfies \eqref{assh0};\\
\indent 2) $\wz H$ satisfies ({\bf A}) of Theorem \ref{lla} if and only if the same holds for $H$;\\
\indent 3) $u\in AM_{H} (\Omega ) $  if and only if $\wz u(x) \in AM_{\wz H} (\Omega )$,
where  $\wz u(x)=u(x)-p_0\cdot x$ for all $x\in\Omega$; and\\
\indent 4) $u$ satisfies ({\bf B}), ({\bf C}), ({\bf D}) of Theorems \ref{lla} and \ref{lla0}
if and only if the same holds for $\wz u$.\\
Thus, from now on, we will assume that $H$ satisfies \eqref{assh0}.

Let $L=H^*$ be the Legendre transform (or the convex conjugate) of $H$:
\begin{equation}\label{conj}L\left(q\right)=\sup_{p\in\rn}\big(p\cdot q -H\left(p\right)\big).
\end{equation}
Then $L$ satisfies \eqref{assh0} and $H=L^*$ is also the convex conjugate of $L$. (i) of Theorem \ref{lla} guarantees that \cite{acjs} is applicable, and
hence any $u\in AM_H(\Omega)$ enjoys the convexity or concavity criteria,
the comparison principle, and the property of comparison with cones,
see Section 2  for details.


The proof of  ({\bf A}) $\Rightarrow$ ({\bf B}) of Theorem \ref{lla} relies on  the following property, see Section 5 below for details.

\medskip
\noindent{\bf Theorem 5.1}. {\it For $n\ge 2$, assume $H$ satisfies both \eqref{assh0}
and condition ({\bf A}) of Theorem \ref{lla}.
If $u\in C^{0,1}(\rn)$   satisfies
\begin{equation*}
\mbox{$S^+_tu(0)=-S^-_tu(0)=k$,
and $\max\big\{S^+_tu(x), -S^-_tu(x)\big\}\le k$\ $\forall x\in\rn$, $t>0$   }
  \end{equation*}
  for some $0\le k<\fz$, then
  there exists a vector $p_0\in\rn$ such that $H(p_0)=k$ and
  $$\mbox{$u\left(x\right)=u\left(0\right)+p_0\cdot x$,\ $\forall   x\in\rn$.}$$
}
\indent Theorem 5.1 was first proven by Crandall-Evans \cite{ce}
for $H(p)=\frac12|p|^2$ by using the Hilbert structure, later by
Wang-Yu \cite{wy} for uniformly convex $H\in C^2(\rn)$
by using the $C^1$-regularity of cone functions $C^H_k(\cdot)$.
If $H\in C^1(\rn)$ is strictly convex and satisfies \eqref{assh0},
by using both  $C^1(\rn)$-regularity of $L$
and strict convexity of  $L $ (see Proposition \ref{LEMstrcon}  below),
we can deduce  Theorem 5.1  by adapting the arguments of Yu \cite{y07}.
However, if $H$ is merely continuous and satisfies \eqref{assh0} and
({\bf A}) of Theorem \ref{lla}, then it is not necessarily true that  $L$ is  either $C^1$  or strictly convex, see Proposition 3.2; and
it is also an open question that
the cone function $C^H_k(\cdot)\in C^1(\rn)$.
Thus, in order to show Theorem 5.1, we need to develop the following
new ideas:
\begin{enumerate}
\item[(a)] The subdifferential set of  $L$ at any $q\in\rn$ must be either
a singleton or a line segment on which $H$ is strictly monotone,
see Proposition \ref{LEMC3}.

\item[(b)] Based on the geometric property (a),
and some careful analysis on the analytic and geometric structure of
Hamilton-Jacobi flows and the subdifferential set of $L$,
either there exists a point $y^+\in\rn$ such that $u$ is linear in $\rr y^+$,
or there are a pair of points $y^\pm\in\rn$ such that  $u$ is linear in
$[sy^-,sy^+]$ for all $s\ge0$. This, with the help of this geometric structure of $\partial L$,
implies that there is a unique $p_0\in  \partial L(y^+)$
such that $H(p_0)=k$ and $u\left(x\right)=u\left(0\right)+p_0\cdot x,\forall x\in\rn,$
see Section 5 for details.
\end{enumerate}

For $n=2$, $\Omega\subset\mathbb R^2$, and
a function $u\in C^{0 }(\Omega)$,
let $x^0\in\Omega$, $\dz\in(0,1]$, and
$0<r<\min\{1, d(x^0,\partial\Omega)\}$, and denote by
$\mathscr D(u)(x^0;r;\dz)$   the set of
vectors $e\in\mathbb R^2 $ such that
\begin{equation}\label{delta-lap}
\sup_{B(x^0,r )}|u(x )-u(x^0)-e \cdot (x-x_0) | \le \dz r.
\end{equation}
In other words, $\mathscr D(u)(x^0;r;\dz)$ collects all the slopes of
linear approximations of $u$ in $B(x^0, r)$ at the scale $\dz$.

The proof of ({\bf A}) $\Rightarrow$ ({\bf C}) and ({\bf D}) in Theorem \ref{lla0}
is based on Theorem 6.1.

\medskip
\noindent{\bf Theorem 6.1}. {\it For $n=2$,
assume $H$ satisfies both \eqref{assh0}  and ({\bf A}) of Theorem \ref{lla}.
For each  $\ez >0$ and each vector  $e_8\in H^{-1}([1,2])  $
there exist  $ \dz_\ast(H,\ez,e_8 ) >0$  such that for
any $u \in AM_H(B(0,8))$ and any $0<\delta< \dz_\ast(H,\ez,e_8 )$,
we have
 \begin{equation*}
 \max_{e\in \mathscr D u(0)}| e_8 -e |\le \ez\quad
 \mbox{whenever
  $e_8 \in \mathscr D(u )(0;8;\frac{\dz}8)$.}
\end{equation*}}

\medskip
From Theorem \ref{lla} and Theorem 6.1, we can show
Theorem \ref{c2}, from which
({\bf C}) and   ({\bf D})  of Theorem \ref{lla0} follow in a rather standard routine,
see Section 7 for details.

\medskip
\noindent{\bf Theorem 7.1}. {\it For $n=2$, assume
 $H$ satisfies both \eqref{assh0} and ({\bf A}) of Theorem \ref{lla}.
 \begin{enumerate}
 \item
For any domain $\Omega\subset\mathbb R^2$,
if $u\in AM_H\left(\Omega\right)$ then $u\in C^1\left(\Omega\right)$.
\item For any $k>0$, there exists a continuous, monotone increasing
function $\rho_k$,   with $\rho_k\left(0\right)=0$, such that for any
$z\in\mathbb R^2$ and $r>0$,
if $v\in AM_H(B(z,2r))$ satisfies $\|H(Dv)\|_{L^\infty(B(z,2r))}\le k$, then
 \begin{equation*}
 \sup_{ x,y\in B\left(z,s\right)}
  |Dv(x)-Dv(y)|\le \rho_k\big(\frac{s}{r}\big), \ \forall 0<s<r.
  \end{equation*}
  \end{enumerate}
}

\medskip
Recall that  a stronger version of
Theorem \ref{xprop}, with
$\delta_\ast$ independent of $e_8$, was first proved  by Savin \cite[Propposition 1]{s05} for infinity harmonic functions, and later
by Wang-Yu \cite[Propsotion 4.1]{wy}
when $H\in C^2(\rr^2)$ is locally uniformly convex.
Here, we  prove Theorem \ref{xprop} by using Theorem \ref{lla}
and blending some ideas of Savin \cite{s05} with the procedure
in the proof of Wang-Yu \cite[Propsotion 4.1]{wy}.
However, to deal with several essential difficulties
arising from general $H$ as in Theorem \ref{xprop},
we need  a few new ideas:

\begin{enumerate}
\item[(c)] We establish two analytic characterizations of  ({\bf A}) of Theorem \ref{lla}, {\color{red}
which measure a  very weak modulus continuity of $\partial H$ via angles or inner product; } 
see Propositions \ref{LEM4.5} and \ref{e-e6}.
Both 
We also develop several fundamental properties of cone functions of $H$,
see Lemmas \ref{la-mu}, \ref{LEMest} and \ref{xxlem},
and Corollaries \ref{com-linear} and \ref{xew4} for details.

\item[(d)] With the help of the properties in (a), we are able to modify the arguments in \cite{wy} to achieve:
\begin{enumerate}
\item [(d1)] First, as in \cite[Lemma 4.2]{wy} one gets  an auxiliary vector $e$ from the linear approximation property \eqref{LAP}
and Savin's planar topology argument, see Lemma \ref{xcon-com}.
The analytic properties of $H$ and cone functions given by (a)
allow us to construct a discrete gradient flow, which yields
the length estimate (see Lemma \ref{xlength}): there exists $\eta(\ez)\rightarrow 0$ such that
$$H(e_8)\le H(e)+\eta(\ez), \ \mbox{provided $\dz>0$ is small}.$$
This, combined with $H(e )\le H(e_{0,8})+C\dz$, implies that
$|H(e_8)-H(e)|\le \eta(\ez)$ and  $|H(e_{0,8})-H(e )|\le \eta(\ez)$.
\item [(d2)] Next, we will show that
$$|e_8-e|\le\frac{\ez}2\ \mbox{and}\ |e_{0,8}-e|\le\frac{\ez}2$$
so that \eqref{xlin3x} follows.
To this end, we establish
 an angle estimate, in terms of $\eta (\ez)$, between the vector $e_8-e$ (resp. $e_{0,8}-e$) and some vector
 $q\in\partial H(p)$ with $|p-e|<\eta$ and $H(p)=H(e)$. This is the content of  Lemma \ref{xangle} (resp. Lemma \ref{xangler}). By suitably choosing $\eta(\ez)>0$, the above norm estimate follows from Proposition \ref{e-e6}, Lemma \ref{xangle}
(resp. Lemma \ref{xangler}).
The angles estimates in Lemmas \ref{xangle} and \ref{xangler} will
be proved by applying Proposition \ref{LEM4.5} and some planar topology.
\end{enumerate}
 \end{enumerate}

We would like to remark that the angle estimates  in Lemmas \ref{xangle}
and \ref{xangler} play essential roles in the proof of Theorem 6.1.  In fact, without these angle estimates, we can only obtain
$|H(e_8)-H(e_{0,8})|\le  \ez$ in Theorem \ref{xprop}. 
So,  instead of  everywhere differentiability of $u$ and the modulus continuity of $Du$ as in \eqref{rho} of Theorem \ref{c2}, we can only
get the modulus  continuity of $Su$. 
However,
the modulus  continuity of $Su$ is weaker than the everywhere differentiability of $u$ and the modulus continuity of $Du$.

 In a recent prepreint \cite{fmz}, the authors are able to employ
 Theorem \ref{lla0} above  to establish in dimension two, the Sobolev $W^{1,2}_\loc$-regularity of
 absolute minimizer  $u$ of $[H(Du)]^\alpha$ for all $\alpha>0$ when $H\in C^2(\rr^2)$ is locally strongly convex
 or  $\alpha>\tau_H(0)$  when $H\in C^0(\rr^2)$ is locally strongly convex.
 In  another forthcoming paper \cite{fmz2}, the authors further apply Theorem \ref{lla0} to
 study the differentiability of absolute minimizers in dimensions $n\ge3$, when $H\in C^0(\rn)$ is locally strongly convex.

\section{Preliminaries}

In this section, we will collect all the basic properties on absolute minimizers
that are necessary to our main theorems, for which we follow \cite{acjs}
closely.

Recall that any linear function is an absolute minimizer of $H$. The first
property is the comparison principle among absolute minimizers, established by
\cite{acjs}.

\begin{lem}\label{com} For $n\ge 2$, assume
 $H$ satisfies both \eqref{assh0} and ({\bf A}) of Theorem \ref{lla}.
For any domain $  U\subset\rn$,  if $u,v\in AM_H(U)\cap C^0(\overline U) $
then we have
\begin{equation}\label{AM1}
\max_{  x\in U}\big(\pm u(x)-v (x)\big)
\le \max_{x\in \partial U}\big(\pm u(x)-v(x)\big).
\end{equation}
\end{lem}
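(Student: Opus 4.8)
The plan is to prove the comparison principle \eqref{AM1} by reducing it to the property of comparison with cones, which is available for absolute minimizers of $H$ under \eqref{assh0} and condition ({\bf A}) by the work of \cite{acjs}. By symmetry (replacing $u$ by $-u$, which is again an absolute minimizer since $AM_H$ is invariant under $u\mapsto -u$ by the evenness considerations built into the setup, or more directly since $\mathcal F_H(u,V)=\mathcal F_H(-u,V)$ when one works with the symmetrized Hamiltonian), it suffices to establish
\begin{equation*}
\max_{x\in U}\big(u(x)-v(x)\big)\le \max_{x\in\partial U}\big(u(x)-v(x)\big),
\end{equation*}
for $u,v\in AM_H(U)\cap C^0(\overline U)$. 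First I would recall that an absolute minimizer enjoys \emph{comparison with cones from above and below}: for any ball $B(z,R)\Subset U$ and any cone function $C(x)=a+b\,|x-z|$ adapted to $H$ (i.e. of the form $C^H_k$), if $u\le C$ on $\partial B(z,R)$ then $u\le C$ in $B(z,R)$, and similarly from below. Equivalently, one has the sub/supersolution characterization in terms of the increasing/decreasing slope functionals $S^+_t$ and $S^-_t$ appearing in Theorem 5.1.

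The key step is the standard doubling-type argument adapted to the $L^\infty$ variational setting. I would argue by contradiction: suppose $\max_{\overline U}(u-v) > \max_{\partial U}(u-v)=:m$. Then the open set $W=\{x\in U: u(x)-v(x)>m\}$ is nonempty, compactly contained in $U$, and $u-v\equiv m$ on $\partial W$. On $W$ one uses that $u$ is a subsolution and $v$ is a supersolution of the relevant Aronsson-type operator in the viscosity/comparison-with-cones sense; the precise mechanism is that comparison with cones for $u$ from above and for $v$ from below, together with the strict inequality $u-v>m$ somewhere in $W$, forces a contradiction via the maximum of $u-v$ propagating along the ``gradient flow'' to $\partial W$. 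Concretely, following \cite{acjs}, one picks a point $x_0\in W$ where $u-v$ attains its maximum over $\overline W$, slides cones of the two types to pin down the local behavior of $u$ and $v$ near $x_0$, and derives that the slopes satisfy incompatible inequalities unless $u-v$ is constant on a path joining $x_0$ to $\partial W$ — but on $\partial W$ the value is $m$, strictly less than $(u-v)(x_0)$, the desired contradiction.

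The main obstacle — and the reason condition ({\bf A}) is invoked in the hypotheses of Lemma \ref{com} — is ensuring that the cone functions $C^H_k$ are well-behaved enough (in particular, strictly monotone in the radial slope $k$, which is exactly what ({\bf A}) provides via the geometric structure of $\partial L$ in Proposition \ref{LEMC3}) so that the sliding-cone argument actually yields a \emph{strict} comparison at the contact point rather than a degenerate equality. Without ({\bf A}), a flat piece in a level set of $H$ would allow a whole interval of cone slopes to be ``equally good'', destroying the rigidity needed to close the argument; this is consistent with the counterexample of Katzourakis \cite{k11} mentioned in the introduction. I expect the remaining details to be a routine transcription of the corresponding argument in \cite{acjs}, using only the convexity/concavity criteria, comparison with cones, and the comparison principle machinery already cited in this section, so the write-up should be short, essentially a reference to \cite{acjs} together with the verification that \eqref{assh0} and ({\bf A}) put us in the scope of their results.
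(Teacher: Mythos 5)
Your bottom line---that Lemma \ref{com} is obtained by citing \cite{acjs} after checking that \eqref{assh0} and ({\bf A}) put $H$ within the scope of their results---is exactly what the paper does: it gives no independent proof of this lemma, only the citation, the relevant verification being that ({\bf A}) trivially implies the minimal level set $H^{-1}(\min_{\rn}H)$ has empty interior, which is the hypothesis under which \cite{acjs} prove comparison. However, two steps in your sketch are wrong as written. First, the reduction of the second inequality to the first via $u\mapsto -u$ does not work: $H$ is not assumed even, so $\mathcal F_H(-u,V)=\esup_{V}H(-Du)$ need not equal $\mathcal F_H(u,V)$, and $-u$ is an absolute minimizer for the reflected Hamiltonian $\check H(p)=H(-p)$, not for $H$. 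The harmless fix is either to read the $\pm$ statement as $\pm(u-v)$ and simply exchange the roles of $u$ and $v$ (both lie in $AM_H(U)$, so no symmetry of $H$ is needed), or to apply the one-sided case to $\check H$, which satisfies \eqref{assh0} and ({\bf A}) whenever $H$ does.

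Second, your explanation of why ({\bf A}) is invoked misidentifies the mechanism. The comparison theorem of \cite{acjs} requires only that $H^{-1}(\min_{\rn}H)$ have empty interior; it does not require any rigidity of cone slopes on general level sets. Condition ({\bf A}) is strictly stronger, and the paper emphasizes precisely this distinction: there are convex coercive $H$ whose minimal level set has empty interior but whose level set $H^{-1}(c)$ contains a segment for some $c$, and for such $H$ comparison and uniqueness still hold while the linear approximation property fails. Accordingly, the Katzourakis-type example of Section 4 is a counterexample to ({\bf B}), ({\bf C}), ({\bf D}), not to \eqref{AM1}: the function $u_f$ constructed there is an absolute minimizer and obeys the comparison principle. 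So the claim that without ({\bf A}) a flat piece in a level set would destroy the comparison argument is incorrect for this lemma; flatness away from the minimum is an obstruction to the regularity results later in the paper, not to \eqref{AM1}. Finally, the interior mechanism you describe (doubling/cone-sliding at an interior maximum of $u-v$) is not how \cite{acjs} argue---their comparison runs through the convexity/concavity criteria for the Hamilton--Jacobi flows $T^tu$, $T_tv$---but since you defer all details to \cite{acjs}, that is a matter of description rather than an additional gap.
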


Next we recall the property of comparison with cones for absolute minimizers.
Assume $H$ satisfies \eqref{assh0}. For any $k\ge 0$, a cone function of $H$, $C^H_k(\cdot)$ is defined by
$$C^H_k(x)=\sup_{H(p)\le k} p\cdot x, \quad \forall x\in\rn.$$
It is evident that $C^H_k(\cdot)\in C^{0,1}(\rn)$ is convex, positively homogeneous
of degree one, sub additive,
and $C^H_k(x) > 0$ for all $k> 0$ and $x\ne 0$.

The proof of following lemma can be found by \cite[Lemma 2.18]{acjs}.
\begin{lem} \label{LEM7.11}  For $n\ge 2$,  assume $H$ satisfies
\eqref{assh0}.  For any domain $U\subset\rn$,
 $u\in C^{0,1}(U)$,  and $k\ge0$, the following statements are equivalent:
\begin{enumerate}
\item[(i)] $H\left(Du\right)\le k$ almost everywhere in $U$.
\item[(ii)] $u\left(x\right)-u\left(y\right) \le C^H_k\left(x-y\right)$,
provided the line segment $[x,y]\subset U$.
\end{enumerate}
\end{lem}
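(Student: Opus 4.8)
The plan is to prove both implications by exploiting the two defining features of $C^H_k$: it is the support function of the sublevel set $\{H\le k\}$, and it is positively homogeneous and convex. For the direction (i) $\Rightarrow$ (ii), fix $x,y$ with $[x,y]\subset U$ and write $v(t)=u\big((1-t)y+tx\big)$ for $t\in[0,1]$. Since $u\in C^{0,1}(U)$, $v$ is Lipschitz on $[0,1]$, hence absolutely continuous, so $u(x)-u(y)=v(1)-v(0)=\int_0^1 v'(t)\,dt$. At a.e.\ $t$ the chain rule for Lipschitz functions gives $v'(t)=Du\big((1-t)y+tx\big)\cdot(x-y)$; here one must be slightly careful because $Du$ is only defined a.e.\ in $U$, so I would first invoke a Fubini-type argument (e.g.\ apply the a.e.\ differentiability on almost every line parallel to $x-y$, or use the standard fact that for $W^{1,\infty}$ functions the restriction to a.e.\ line segment is absolutely continuous with derivative the directional derivative of $Du$) to justify $v'(t)=Du\cdot(x-y)$ a.e. Then, since $H(Du)\le k$ a.e.\ and $[x,y]\subset U$, at a.e.\ $t$ we have $H\big(Du((1-t)y+tx)\big)\le k$, so by the very definition $C^H_k(x-y)=\sup_{H(p)\le k} p\cdot(x-y)$ we get $v'(t)\le C^H_k(x-y)$. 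Integrating over $t\in[0,1]$ yields $u(x)-u(y)\le C^H_k(x-y)$, which is (ii).

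For (ii) $\Rightarrow$ (i), I would show $H(Du(x_0))\le k$ at every point $x_0\in U$ where $u$ is differentiable (which is a.e.\ by Rademacher). Fix such an $x_0$ and let $e=Du(x_0)$. For any direction $\omega\in\rn$ with $|\omega|=1$, apply (ii) with $x=x_0+s\omega$, $y=x_0$ for small $s>0$ (legitimate since $[x_0,x_0+s\omega]\subset U$ for $s$ small): $u(x_0+s\omega)-u(x_0)\le C^H_k(s\omega)=sC^H_k(\omega)$ by positive homogeneity. Dividing by $s$ and letting $s\to0^+$ gives $e\cdot\omega\le C^H_k(\omega)$ for all unit $\omega$, hence $e\cdot q\le C^H_k(q)$ for all $q\in\rn$ by homogeneity. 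Now $C^H_k$ is the support function of the closed convex set $K=\{p\in\rn: H(p)\le k\}$ (closed and convex since $H$ is continuous and convex; nonempty since $H(0)=0\le k$ by \eqref{assh0}; and bounded since $H$ is coercive/superlinear). Thus $e\cdot q\le \sup_{p\in K}p\cdot q$ for all $q$ means precisely $e\in K$, i.e.\ $H(e)=H(Du(x_0))\le k$. Since this holds at a.e.\ $x_0$, we get (i).

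The main technical point, and the only place requiring genuine care rather than bookkeeping, is the justification in the (i) $\Rightarrow$ (ii) direction that $v'(t)=Du\cdot(x-y)$ for a.e.\ $t$ along the segment — i.e.\ that the a.e.\ gradient bound $H(Du)\le k$ in $U$ actually transfers to the line integral. This is the standard ``good on almost every line'' property of Sobolev functions: one perturbs the segment within a small tube, uses Fubini so that $H(Du)\le k$ holds a.e.\ on almost every parallel segment, establishes the inequality there, and then passes to the limit using continuity of $u$ and of $C^H_k$. Everything else (the support-function identification of $C^H_k$, the one-sided directional-derivative argument) is elementary convex analysis, using only that $\{H\le k\}$ is a nonempty compact convex set, which is guaranteed by \eqref{assh0}. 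I note that the statement is attributed to \cite[Lemma 2.18]{acjs}, so I would simply organize the argument along these lines and cite that reference for the details of the line-integral step.
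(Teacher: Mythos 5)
Your proof is correct. The paper itself gives no argument for this lemma, only the citation to \cite[Lemma 2.18]{acjs}, and your two implications (the Fubini/almost-every-parallel-segment argument to transfer the a.e.\ bound $H(Du)\le k$ to line integrals, and the support-function/separation argument identifying $C^H_k$ with the support function of the compact convex set $\{H\le k\}$) are exactly the standard route that reference takes, so there is nothing to add.
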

Denote by ${\rm{USC}}\left(U\right)$ (resp. ${\rm{LSC}}\left(U\right)$)
the space of upper (resp. lower) semicontinuous functions in $U$.
We introduce
\begin{defn}\rm For $n\ge 2$, assume
$H\in C^0(\rn)$ satisfies \eqref{assh0}.
\begin{enumerate}
\item[(i)] A function $u\in {\rm{USC}}\left(U\right)$ satisfies the property of comparison with cones  for $H$ from above in $U$, if
$$\max_V\{u- C^H_k\left(x-x_0\right)\}=\max_{\partial V}\{u- C^H_k\left(x-x_0\right)\}$$
whenever $V\Subset\Omega$, $k\ge0$ and $x_0\in\rn\setminus V$.
Write $u\in CCA_H\left(U\right)$ for brevity.
\item[(ii)] A function $u\in {\rm{LSC}}\left(U\right)$ satisfies the property of comparison  with cones  for $H$ from below in $U$, if
  $$\min_V\{u+ C^{ H}_k\left(x_0-x\right)\}=\min_{\partial V}\{ u+ C^{  H}_k\left(x_0-x\right)\}$$
whenever $V\Subset\Omega$, $k\ge0$ and $x_0\in\rn\setminus V$.
Write $u\in CCB_H\left(U\right) $ for brevity.
\item[(iii)] A $u \in C^0\left(U\right)$ satisfies the property of comparison  with cones  for $H$   in $U$ (for brevity, write $u\in CC_H\left(U\right)$),
 if  $u\in CCB_A\left(U\right)\cap CCB_H\left(U\right) $.
 \end{enumerate}
\end{defn}

It is straightforward to see that a function in $CC_H(U)$ enjoys the following stability property.
\begin{lem}\label{LEMconverges} For $n\ge 2$, let $H$ satisfy \eqref{assh0}.
Assume $u_j\in CC_H(U) $ and $ u_{j}\to u_\fz$ in $C^0(U)$.
 Then $ u_\fz\in CC_H(U)$.
 \end{lem}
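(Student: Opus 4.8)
\textbf{Proof plan for Lemma~\ref{LEMconverges}.}
The plan is to verify directly that the limit $u_\fz$ satisfies the two cone comparison inequalities defining $CC_H(U)$, by passing to the limit in the corresponding inequalities for the $u_j$. Since $CC_H(U)=CCA_H(U)\cap CCB_H(U)$, and the two cases are symmetric (replace $u$ by $-u$ and $x_0-x$ by $x-x_0$ in the cone), I will treat only the comparison with cones from above, i.e.\ I will show $u_\fz\in CCA_H(U)$. So fix $V\Subset U$, $k\ge 0$, and $x_0\in\rn\setminus V$, and set $\fai_k(x):=C^H_k(x-x_0)$, which is a fixed continuous function on $\overline V$. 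Since $u_j\in CCA_H(U)$ we have
\begin{equation*}
\max_{\overline V}\big(u_j-\fai_k\big)=\max_{\partial V}\big(u_j-\fai_k\big)\qquad\text{for every }j.
\end{equation*}

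Next I would use uniform convergence on the compact set $\overline V$: because $u_j\to u_\fz$ in $C^0(U)$, in particular $u_j\to u_\fz$ uniformly on $\overline V$, hence $u_j-\fai_k\to u_\fz-\fai_k$ uniformly on $\overline V$. The maximum of a function over a fixed compact set is continuous with respect to uniform convergence (indeed $|\max_K f-\max_K g|\le\|f-g\|_{C^0(K)}$), so we may pass to the limit $j\to\fz$ on both sides, obtaining
\begin{equation*}
\max_{\overline V}\big(u_\fz-\fai_k\big)=\lim_{j\to\fz}\max_{\overline V}\big(u_j-\fai_k\big)=\lim_{j\to\fz}\max_{\partial V}\big(u_j-\fai_k\big)=\max_{\partial V}\big(u_\fz-\fai_k\big).
\end{equation*}
Since $V\Subset U$, $k\ge0$, and $x_0\in\rn\setminus V$ were arbitrary, and since $u_\fz\in C^0(U)\subset\mathrm{USC}(U)$ as a uniform limit of continuous functions, this shows $u_\fz\in CCA_H(U)$. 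Applying the same argument to $-u_j$ (which also lie in the relevant class, using the other half of $CC_H$) gives $u_\fz\in CCB_H(U)$, and therefore $u_\fz\in CC_H(U)$.

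This argument is entirely routine; there is essentially no obstacle, the only point requiring (minimal) care is that the compact set $\overline V$ and the comparison cone $\fai_k$ are held \emph{fixed} while $j\to\fz$, so that the elementary Lipschitz-in-sup-norm estimate $|\max_K f-\max_K g|\le\|f-g\|_{C^0(K)}$ applies on both $\overline V$ and $\partial V$; no quantitative control on the cone functions $C^H_k$ or on $H$ is needed beyond their continuity, which is guaranteed under \eqref{assh0}.
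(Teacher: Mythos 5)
Your proof is correct and follows essentially the same route as the paper: both fix $V\Subset U$, $k$, and the cone vertex, and pass to the limit using the uniform convergence of $u_j$ on the compact set $\overline V$. The only cosmetic difference is that the paper phrases comparison with cones as an implication about inequalities $u_j\le C^H_k(\cdot-x_0)+b+\ez$ on $\partial V$ versus $V$, while you invoke directly the estimate $|\max_K f-\max_K g|\le\|f-g\|_{C^0(K)}$; these are interchangeable, and your treatment of the $CCB_H$ half by symmetry matches the paper's remark that it suffices to prove the $CCA_H$ case.
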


 \begin{proof} For simplicity, we only show that $u_\fz\in CCA_H(U)$.
 To do it, let $V\Subset U$ and $x^0\notin V$ and assume
 that for some $k\ge 0$,
 $$u_\fz(x)-u_\fz(x_0)\le C_k^H(x-x_0)+b, \ \forall x\in \partial V,$$
 then we have that for any $\ez>0$, if $j$ is sufficiently large then
 $$u_j(x)-u_j(x_0)\le C_k^H(x-x_0)+b+\ez, \ \forall x\in \partial V.$$
 Since $u_j\in CC_H(U)$,  it follows that
 $$u_j(x)-u_j(x_0)\le C_k^H(x-x_0)+b+\ez,
 \forall x\in   V.$$
Sending $j\rightarrow\infty$,
we obtain that
$$u_\fz(x)-u_ \fz(x_0)\le C_k^H(x-x_0)+b +\ez,\ \forall x\in   V.$$
Since $\ez>0$ is arbitrary, it follows that $u_\fz\in CCA_H(U)$.
\end{proof}

Let $L$ be the convex conjugate of $H$ given by \eqref{conj}. If $H$ satisfies \eqref{assh0}, then $L$ satisfies \eqref{assh0},
and $H$ is also the convex conjugate of $L$.
Given a domain $U\subset\rn$ and a bounded function $u\in C^0 \left(U\right)$, the Hamilton-Jacobi flows are  defined by
$$T^tu\left(x\right)=\sup_{y\in U}\left\{u\left(y\right)-tL\left(\frac{y-x}t\right)\right\},
\ \mbox{and}\  T_tu\left(x\right)=\inf_{y\in U}\left\{u\left(y\right)+tL\left(\frac{x-y}t\right)\right\}, \quad\forall t>0,x\in U,$$
and
$$T^0u\left(x\right)=u\left(x\right)=T_0u\left(x\right),\forall x\in U.$$
The slope functions via the Hamilton-Jacobi flows can be defined by
$$S^+_t u\left(x\right)=  \frac1t\big\{T^tu\left(x\right)-u\left(x\right)\big\} \ \mbox{and}\  S^-_t u\left(x\right)= \frac1t\big\{T_tu\left(x\right)-u\left(x\right)\big\}\quad\forall x\in U, t>0.$$

 For any $r>0$, set $U_r:=\{x\in U: \dist\left(x,\partial U\right)>r\}$.
\begin{defn} \rm  For $n\ge 2$, assume $H$ satisfies \eqref{assh0}.
For any domain $U\subset\rn$,
\begin{enumerate}
\item[(i)] a bounded function $u\in C^0\left(U\right)$ enjoys the convexity
criteria,  if
for any $r>0$ there exists a $\dz_r>0$ such that   for all
$x\in U_r$, the map $T^t u(x): [0,\dz_r)\mapsto \mathbb R$ is convex in
the $t$-variable.

\item[(ii)] a bounded function $u\in C^0\left(U\right)$ enjoys the concavity criteria,
if for any $r>0$ there exists a $\dz_r>0$ such that   for all
$x\in U_r$, the map $T_tu(x): [0,\dz_r)\mapsto \mathbb R$  is concave
in the $t$-variable.
\end{enumerate}
\end{defn}

We point out  that when $U=\rn$, $T^tu , T_tu$, and $S^\pm_t u$
can be defined for any $u\in C^{0,1}(\rn)$ that satisfies   $\|H(Du)\|_{L^\fz(\rn)}< \fz$, due to the fact that $L$ satisfies \eqref{assh0}.
Moreover, we have the following localization property for both $T^t u$ and
$T_tu$.
\begin{lem}\label{LEM3.3} For $n\ge 2$, assume $H$ satisfies \eqref{assh0}.
If $u\in C^{0,1}(\rn)$ satisfies $\| H(Du) \|_{L^\fz(\rn)}=k<\fz$, then there exists a constant $R_k>0$ depending on $k$ and $H$ such that
 \begin{equation}\label{E3.1}\pm S^\pm_tu\left(x\right)=\sup_{y\in \overline {B\left(x,R_kt\right)}} \left\{\pm \frac{u\left(y\right)-u\left(x\right)}t- L\big(\pm\frac{y-x}t\big)\right\}\quad\forall x\in\rn, t>0.
 \end{equation}
\end{lem}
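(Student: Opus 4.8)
The plan is to show that the supremum defining $T^tu(x)$ (and analogously $T_tu(x)$) is attained, up to negligible error, within a fixed ball $\overline{B(x,R_kt)}$, where $R_k$ is chosen so large that points $y$ outside this ball contribute strictly less than $u(x)$ itself. The key input is that $u$ is globally Lipschitz with $\|H(Du)\|_{L^\infty(\rn)}=k$, which by Lemma \ref{LEM7.11} gives the cone bound $u(y)-u(x)\le C^H_k(y-x)$ for all $x,y\in\rn$, and that $L=H^*$ satisfies \eqref{assh0}, in particular $L$ is superlinear: $L(q)/|q|\to\infty$ as $|q|\to\infty$. Since $C^H_k$ is positively homogeneous of degree one, $C^H_k(y-x)\le k'|y-x|$ for some constant $k'=k'(H,k)$ (namely $k'=\sup_{|w|=1}C^H_k(w)=\sup\{|p|:H(p)\le k\}$, finite by coercivity of $H$). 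Hence for any candidate $y$,
\[
u(y)-tL\Big(\frac{y-x}t\Big)\le u(x)+k'|y-x|-tL\Big(\frac{y-x}t\Big)=u(x)+t\Big[k'\big|\tfrac{y-x}t\big|-L\big(\tfrac{y-x}t\big)\Big].
\]

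By superlinearity of $L$, there is $R_k>0$, depending only on $H$ and $k$ (through $k'$), such that $L(q)> k'|q|$ whenever $|q|> R_k$. Consequently, if $|y-x|>R_kt$, the bracketed quantity is negative, so $u(y)-tL((y-x)/t)< u(x)$. On the other hand, taking $y=x$ gives $u(x)-tL(0)=u(x)$ since $L(0)=0$ by \eqref{assh0}. Therefore the supremum over all $y\in\rn$ equals the supremum over $y\in\overline{B(x,R_kt)}$, and it is $\ge u(x)$; this yields
\[
S^+_tu(x)=\frac1t\big(T^tu(x)-u(x)\big)=\sup_{y\in\overline{B(x,R_kt)}}\Big\{\frac{u(y)-u(x)}t-L\big(\tfrac{y-x}t\big)\Big\},
\]
which is the plus case of \eqref{E3.1}. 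The minus case is entirely symmetric: apply the same argument to $-u$ (which also has $\|H(D(-u))\|_{L^\infty}\le k$ after noting $C^H_k$ controls $u(x)-u(y)=(-u)(y)-(-u)(x)$ as well, since the cone bound is two-sided in the sense that $|u(y)-u(x)|\le k'|y-x|$), or directly estimate $(u(x)-u(y))/t-L((x-y)/t)$ in the definition of $T_tu$.

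I do not anticipate a genuine obstacle here; the only point requiring mild care is making the dependence of $R_k$ explicit and uniform: one must verify that $k'=\sup\{|p|:H(p)\le k\}<\infty$, which follows from coercivity of $H$, and then invoke superlinearity of $L=H^*$ to extract $R_k$ depending only on $k'$ (hence only on $H$ and $k$), not on $x$ or $t$. It is also worth recording that the same $R_k$ works simultaneously for all $x$ and $t>0$ precisely because the problem is scale- and translation-invariant after the substitution $q=(y-x)/t$.
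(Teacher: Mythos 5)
Your proof is correct and follows essentially the same route as the paper: the cone/coercivity bound gives a global Lipschitz estimate $|u(y)-u(x)|\le k'|y-x|$ with $k'=\sup\{|p|:H(p)\le k\}$, and the superlinearity of $L$ together with $L(0)=0$ shows the supremum is unchanged when restricted to $\overline{B(x,R_kt)}$. One tiny caveat: the aside about passing to $-u$ is not needed (and $\|H(D(-u))\|_{L^\infty}\le k$ need not hold unless $H$ is even); the direct estimate of $(u(x)-u(y))/t-L((x-y)/t)$, which you also give, is the right way to handle the minus case, exactly as in the paper.
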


\begin{proof} From Lemma 2.2, there exists $R_k>0$ such that
$$|u\left(y\right)-u\left(x\right)|\le R_k|x-y|, \ \forall x,y\in\rn.$$
This, combined with the superlinear growth of $L$, implies
that there exists a monotone increasing function $M:[0,\fz)\to[0,\fz)$ such that $\lim_{R\rightarrow\infty}M\left(R\right)=\fz$ and
$L\left(q\right)\ge M\left(R\right)R$ whenever $|q|\ge R$.
If $M\left(R_k\right) > k$ and $|x-y|\ge R_kt$, then
$$\pm \frac{u(y)-u(x)}t-L\big(\pm\frac{y-x}t\big)\le
\big(k-M\big(\frac{|x-y|}t\big)\big)\frac{|x-y|}t\le 0,$$
which yields \eqref{E3.1}.  This completes proof of Lemma \ref{LEM3.3}.
\end{proof}

Now we state the most important characterization of absolute minimizers
in terms of comparison with cones and convexity/concavity criteria.
Since the condition ({\bf A}) of Theorem \ref{lla}
implies that the minimal level set of $H$
has an empty interior, the proof follows directly from \cite[Theorem 4.8]{acjs},
which is omitted here.

\begin{lem}\label{LEM7.13} For $n\ge 2$,
assume $H$ satisfies both \eqref{assh0}  and ({\bf A}) of Theorem \ref{lla}.
Then, for any domain $U\subset\rn$ and bounded function $u\in C^0(U)$, the following statements are equivalent:

\begin{enumerate}
\item[(i)]  $u\in  AM_H\left(U\right)$.
\item[(ii)]  $u\in  CC _H\left(U\right)$.
\item[(iii)] $u\in C^0\left(U\right)$  enjoys both the convexity criteria
and concavity criteria.
\end{enumerate}
  \end{lem}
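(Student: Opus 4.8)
The plan is to verify the three equivalences by going through the chain (i) $\Rightarrow$ (ii), (ii) $\Leftrightarrow$ (iii), and (ii) $\Rightarrow$ (i), after first recording the one structural consequence of ({\bf A}) that makes the machinery of \cite{acjs} available: the minimal level set $H^{-1}(0)=\{p\in\rn:H(p)=0\}$ has empty interior. This is immediate, since $H^{-1}(0)=\{H\le 0\}$ is convex and contains $0$, so it contains no line segment under ({\bf A}) and hence reduces to $\{0\}$; in particular it cannot contain a ball. With \eqref{assh0} and this emptiness of interior, $H$ is exactly in the class treated by \cite{acjs}, so the assertion is \cite[Theorem 4.8]{acjs}; below I indicate how that proof proceeds.

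For (i) $\Rightarrow$ (ii): the key preliminary fact is that every cone function $x\mapsto C^H_k(x-x_0)$ is an absolute minimizer of $H$ on $\rn\setminus\{x_0\}$ — it is positively homogeneous, satisfies $H(DC^H_k)=k$ a.e.\ away from the vertex, and there obeys comparison with cones (it is classical that $C^H_k(z)=\inf_{t>0}\big(kt+tL(z/t)\big)$, which exhibits $C^H_k(\cdot-x_0)+b$ as an extremal of the Hamilton-Jacobi flow). Given $u\in AM_H(U)$, $V\Subset U$, $k\ge 0$, and $x_0\notin V$, apply the comparison principle Lemma \ref{com} on $V$ to the pair $u$ and $v(x)=C^H_k(x-x_0)+\max_{\partial V}\{u-C^H_k(\cdot-x_0)\}$: since both are absolute minimizers on $V$ and $v\ge u$ on $\partial V$, we get $v\ge u$ on $V$, i.e.\ $\max_V\{u-C^H_k(\cdot-x_0)\}=\max_{\partial V}\{u-C^H_k(\cdot-x_0)\}$; the downhill version is symmetric, so $u\in CC_H(U)$.

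For (ii) $\Leftrightarrow$ (iii): this is the duality between cone functions and the Hamilton-Jacobi semigroup. Using $T^{t+s}u=T^t(T^su)$ (a Lax-Oleinik identity) together with the localization of Lemma \ref{LEM3.3}, one shows that for $x\in U_r$ and small $t$, $T^tu(x)-u(x)=\sup\{u(y)-u(x)-tL((y-x)/t)\}$ records exactly the failure of $u$ to lie below cones with vertex outside a ball about $x$. Translating "$u$ stays below the $k$-cone on a ball" into a bound on the increments of $S^+_t u$ via the identity $C^H_k(z)=\inf_{t>0}(kt+tL(z/t))$, one obtains that $u\in CCA_H$ near $x$ iff $t\mapsto S^+_tu(x)$ is nondecreasing on some $[0,\delta_r)$ iff $t\mapsto T^tu(x)$ is convex there; the concavity criterion corresponds symmetrically to $CCB_H$ via $T_tu$ and $S^-_tu$. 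Hence (ii) and (iii) are equivalent.

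The reverse implication (ii) $\Rightarrow$ (i) is where the real work — and the emptiness of the interior of $H^{-1}(0)$ — enters, and I expect it to be the main obstacle. Following \cite{acjs}: given $u\in CC_H(V)$ and a competitor $v\in W^{1,\infty}_\loc(V)\cap C(\overline V)$ with $v=u$ on $\partial V$ and $\|H(Dv)\|_{L^\infty(V)}=k$, one must show $\|H(Du)\|_{L^\infty(V)}\le k$. By Lemma \ref{LEM7.11} it suffices to prove $u(x)-u(y)\le C^H_k(x-y)$ for every segment $[x,y]\subset V$; this is done by using comparison with cones from above to transport the pointwise slope information of $u$ out to $\partial V$, where $u=v$ and $v$ already satisfies the cone inequality (again by Lemma \ref{LEM7.11}). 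The empty-interior hypothesis is used precisely to exclude the degenerate possibility that $u$ has all comparison-slopes $\le k$ while still failing $H(Du)\le k$ on a set of positive measure — i.e.\ to force the tight extremals to propagate — exactly as in the uniqueness arguments of \cite{acjs}. Once both one-sided statements hold, $u\in AM_H(V)$ for every $V\Subset U$, hence $u\in AM_H(U)$, closing the chain.
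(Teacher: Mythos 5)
Your proposal is essentially the paper's own proof: the paper likewise notes that condition ({\bf A}) forces the minimal level set $H^{-1}(0)$ to have empty interior (indeed it reduces to $\{0\}$, exactly as you argue) and then quotes \cite[Theorem 4.8]{acjs} verbatim, omitting any further argument, so your reduction-plus-citation is the same route. Your appended sketch of the ACJS argument is a reasonable outline, with the one caveat that invoking Lemma \ref{com} to get (i)$\Rightarrow$(ii) would be circular inside the ACJS development (there the comparison principle is derived from this equivalence, not used to prove it); since the actual proof is carried by the citation, this does not affect correctness.
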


It follows from Lemma \ref{LEM7.13} and \cite[Lemma 4.2]{acjs} that
if $u\in AM_H\left(U\right)$ is bounded, then for $r>0$ and $x\in U_r$,
the function $t\in(0,\dz_r]\to \pm S^\pm_t u\left(x\right)$ is monotone
increasing.
Hence
\begin{equation}\label{slope1}
S^\pm u(x)=\lim_{t\to0}S^\pm_t u\left(x\right), \quad\forall x\in U,
\end{equation}
exists and is upper semicontinuous in $U$.
Moreover, as in \cite[Lemma  4.3]{acjs},
for any $V\Subset U$, it holds that
$$\|S^+ u\|_{L^\fz(V)}=\|S^- u\|_{L^\fz(V)}=\|H(Du)\|_{L^\fz(V)},$$
and hence
\begin{equation}\label{E-2}
S u\left(x\right) :=\lim_{r\to0}\|H\left(Du\right)\|_{L^\fz\left(B\left(x,r\right)\right)}=\lim_{r\to0}\|S^\pm u\|_{L^\fz(B(x,r))}=\pm S^\pm u(x)
\end{equation}
holds for all $x\in U$.

We also recall the slope functions  defined via the cone functions:
$$\widehat{S}^+_t u\left(x\right)=\inf\left\{k\ge0, u(y)-u(x)\le C^H_k(y-x)\quad\forall y\in \partial B(0,t)\right\}, \quad \forall x\in U,\ 0<t<\dist(x,\partial U) $$
and
$$-\widehat {S}^-_t u\left(x\right)=\inf\left\{k\ge0, u(x)-u(y)\le C^H_k(y-x)\quad\forall y\in \partial B(0,t)\right\}, \quad \forall x\in U,\ 0<t<\dist(x,\partial U).$$

Following the argument of \cite[Proposiitons 3.1 and 3.3]{gwy06} line by line, we have the following Lemmas \ref{LEMcone} and \ref{LEMconeincreasing}, which will be needed in the proof of Theorem \ref{lla0}.
\begin{lem}\label{LEMcone} For $n\ge 2$, let $H$ satisfy \eqref{assh0}.
For $U\Subset\rn$, assume that $u\in CC_H(U)$. Then for any $x\in U$,
the functions $t\in(0,\dist(x,\partial U))\mapsto \pm \widehat S^\pm_tu(x)$ is monotone increasing,
and
 $ \displaystyle \widehat{S}u(x)= \lim_{t\to0}\pm\widehat S^\pm_tu(x)$
 exists for all $x\in U$ and is upper semicontinuous in $U$.
 Furthermore, $\widehat{S}u(x)=Su(x)$ for all $x\in U$.
\end{lem}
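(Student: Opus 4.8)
The plan is to follow \cite[Propositions 3.1 and 3.3]{gwy06} essentially line by line, the only difference being that the cone function $C^H_k$ replaces the Euclidean cone $k\,|\cdot|$. All the structural facts used there remain valid: $C^H_k$ is convex, positively homogeneous of degree one, subadditive, and positive off the origin, and $k\mapsto C^H_k(z)$ is continuous, nondecreasing, and tends to $\fz$ for each fixed $z\ne0$ (the latter since $\{H\le k\}$ contains a ball of radius tending to $\fz$). The one genuinely analytic input that replaces the Euclidean computation is the Legendre duality $\sup_{z\in\rn}\big(C^H_k(z)-L(z)\big)\le k$, which follows at once from $C^H_k(z)=\sup_{H(p)\le k}\langle p,z\rangle$ and $L^*=H$.

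First I would establish monotonicity of $t\mapsto\widehat S^+_tu(x)$; the statement for $t\mapsto-\widehat S^-_tu(x)$ is entirely symmetric, with $u\in CCB_H(U)$ replacing $u\in CCA_H(U)$. Fix $x\in U$ and $0<t<s<\dist(x,\partial U)$ and put $k:=\widehat S^+_su(x)$. Since $k'\mapsto\max_{y\in\partial B(x,s)}\big(u(y)-u(x)-C^H_{k'}(y-x)\big)$ is continuous, nonincreasing, and tends to $-\fz$, the infimum defining $\widehat S^+_su(x)$ is attained, so $u(y)-u(x)\le C^H_k(y-x)$ for all $y\in\partial B(x,s)$. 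Now I would apply comparison with cones from above on the punctured ball $V:=B(x,s)\setminus\{x\}$, which satisfies $V\Subset U$ and $x\notin V$ because $n\ge2$, with vertex $x_0=x$ and level $k$: on $\partial V=\partial B(x,s)\cup\{x\}$ the function $u-u(x)-C^H_k(\cdot-x)$ is $\le0$, hence it is $\le0$ on all of $\overline V$, in particular on $\overline{B(x,t)}$; this yields $\widehat S^+_tu(x)\le k$. Note this argument also shows $u(y)-u(x)\le C^H_{\widehat S^+_tu(x)}(y-x)$ for every $y\in\overline{B(x,t)}$, which is used below.

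Since $t\mapsto\widehat S^+_tu(x)$ and $t\mapsto-\widehat S^-_tu(x)$ are nondecreasing and $\ge0$, the limits as $t\to0$ exist; denote them $\widehat S^+u(x)$ and $-\widehat S^-u(x)$. For $\widehat S^+u(x)\le Su(x)$: for small $r>0$ set $k_r:=\|H(Du)\|_{L^\fz(B(x,r))}$; by Lemma \ref{LEM7.11}, $u(y)-u(x)\le C^H_{k_r}(y-x)$ whenever $[x,y]\subset B(x,r)$, so $\widehat S^+_tu(x)\le k_r$ for $0<t<r$, and letting $r\to0$ and using \eqref{E-2} gives $\widehat S^+u(x)\le Su(x)$; likewise $-\widehat S^-u(x)\le Su(x)$. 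For the reverse inequality, fix $t$ with $\overline{B(x,t)}\subset U$, set $k:=\widehat S^+_tu(x)$, and recall $u(y)-u(x)\le C^H_k(y-x)$ on $\overline{B(x,t)}$. Since $u\in CC_H(U)$ is locally Lipschitz near $x$, the superlinearity of $L$ together with a localization of the Hamilton--Jacobi flow (as in Lemma \ref{LEM3.3}) shows that for all sufficiently small $s>0$ the supremum defining $S^+_su(x)=\sup_{y\in U}\big(\frac{u(y)-u(x)}{s}-L\big(\frac{y-x}{s}\big)\big)$ is attained over $y\in\overline{B(x,t)}$; for such $y$, the cone bound, the homogeneity of $C^H_k$, and $C^H_k(z)-L(z)\le k$ give $\frac{u(y)-u(x)}{s}-L\big(\frac{y-x}{s}\big)\le C^H_k\big(\frac{y-x}{s}\big)-L\big(\frac{y-x}{s}\big)\le k$, so $S^+_su(x)\le k$. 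Letting $s\to0$ then $t\to0$ gives $S^+u(x)\le\widehat S^+u(x)$, and since $S^+u(x)=Su(x)$ by \eqref{E-2} we conclude $Su(x)\le\widehat S^+u(x)$. Hence $\widehat S^+u(x)=Su(x)$, and symmetrically $-\widehat S^-u(x)=Su(x)$; we set $\widehat Su(x):=Su(x)$.

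Upper semicontinuity of $\widehat Su$ in $U$ is then immediate since $\widehat Su=Su$ and $Su$ is upper semicontinuous by \eqref{E-2}; alternatively, $\widehat S^+u=\inf_{t>0}\widehat S^+_tu$ is an infimum of the upper semicontinuous functions $x\mapsto\widehat S^+_tu(x)$. I expect the main obstacle to be the reverse inequality $Su(x)\le\widehat S^+u(x)$ in the third paragraph: it requires converting a cone bound based at the single vertex $x$ on $\overline{B(x,t)}$ into a bound on the Hamilton--Jacobi slope $S^+_su(x)$, which needs both the duality $\sup_z\big(C^H_k(z)-L(z)\big)\le k$ and a localization of $T^su$ valid for small $s$ --- exploiting that $u\in CC_H(U)$ is locally Lipschitz and that $L$ is superlinear --- so as to discard the contribution of far-away $y\in U$ to the supremum.
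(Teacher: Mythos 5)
Your monotonicity argument (the cone bound on $\partial B(x,s)$ propagated to the interior by comparison with cones on the punctured ball) and your proof of $\widehat S^+u(x)\le Su(x)$ via Lemma \ref{LEM7.11} are essentially the paper's own steps. The gap is in the reverse inequality. What you actually prove there is $S^+u(x)\le\widehat S^+u(x)$, where $S^+u$ is the Hamilton--Jacobi slope; to conclude $Su(x)\le\widehat S^+u(x)$ you invoke \eqref{E-2}, i.e. $S^+u(x)=Su(x)$. But \eqref{E-2} is not available under the hypotheses of Lemma \ref{LEMcone}: the paper derives it only for bounded $u\in AM_H(U)$ and only through Lemma \ref{LEM7.13}, hence under condition ({\bf A}), whereas Lemma \ref{LEMcone} assumes merely \eqref{assh0} and $u\in CC_H(U)$. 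Moreover the direction you need, $Su(x)\le S^+u(x)$ --- a slope at the single point $x$ controlling $\|H(Du)\|_{L^\fz}$ on nearby balls --- is exactly the nontrivial content of the lemma in its Hamilton--Jacobi guise, so quoting it essentially begs the question; by contrast, the localization of $T^su$ that you single out as the main obstacle is routine.

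The paper closes this step without any Hamilton--Jacobi machinery: using monotonicity in $t$ and upper semicontinuity, for each $\ez>0$ there is a scale $t_\ez>0$ with $\widehat S^+_{t_\ez}u(y)\le\widehat S^+u(x)+\ez$ for all $y\in\overline{B(x,t_\ez)}$; then for any $z,y\in B(x,\frac{t_\ez}2)$ one has $u(z)\le u(y)+C^H_{\widehat S^+_{t_\ez}u(y)}(z-y)\le u(y)+C^H_{\widehat S^+u(x)+\ez}(z-y)$, and Lemma \ref{LEM7.11} converts this two-point cone bound into $\|H(Du)\|_{L^\fz(B(x,\frac{t_\ez}2))}\le\widehat S^+u(x)+\ez$, whence $Su(x)\le\widehat S^+u(x)$ after sending $\ez\to0$. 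If you replace your Hamilton--Jacobi detour (duality $C^H_k\le L+k$ plus localization of the flow) by this uniform-basepoint cone argument, your proof closes under the stated hypotheses; as written, the decisive inequality rests on a fact the paper only establishes under the strictly stronger assumptions of ({\bf A}) and absolute minimality.
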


\begin{proof} For $0<t<\dist(x,\partial U)$, it follows from
 $u\in CC_H(U)$ that
 $$\mbox{$u(y)\le u(x)+{C}^H_{\widehat {S}^+_t u(x)}(y-x),
 $
 for  $ y\in \overline {B(x,t)}$. }$$
Hence we have that, for $0<r<t$,
$$u(y)\le u(x)+{C}^H_{\widehat S^+_t u(x)}(y-x),\quad
\forall y\in {\overline B(x,r)}$$
so that
$\widehat S^+_{r} u(x)\le \widehat S^+_{t} u(x)$, and
the function $t\in(0,\dist(x,\partial U))\mapsto  \widehat {S}^+_tu(x)$
is monotone increasing. Therefore,
$\widehat S^+ u(x):=\lim_{t\to 0} \widehat S^+_{t} u(x)$
exists and  is upper semicontinuous in $  U$.

To see $\widehat S^+ u=Su$ in $U$,
let $k_t= \|H(Du)\|_{L^\fz(B(x,t))}$. Then by Lemma \ref{LEM7.11} we have
$$u(y)\le u(x)+{C}^H_{k_t }(y-x),
\quad \forall y\in \overline {B(x,t)}.$$
It is not hard to see this implies
that $S^+_{t} u(x)\le k_t$, and hence
$$\widehat{S}^+u(x)=\lim_{t\to0} \widehat{S}^+_{t} u(x)
\le \lim_{t\rightarrow 0} k_t=Su(x).$$
On the other  hand,
the upper semicontinuity of $\widehat{S}^+u$  implies
that for any $\ez>0$, there exists $t_\ez>0$ such that
$$\widehat S^+_{t_\ez}u(y)\le \widehat S^+u(x)  + \ez,
\forall y\in \overline {B(x,t_\ez)}.$$
Therefore, for any $z,y\in B(x,\frac{t_\ez}2)$,
we have
$$u(z)\le u(y)+{C}^H_{\widehat S^+_{t_{\ez}}u(y) }(z-y)
\le u(y)+{C}^H_{\widehat S^+u(x)+\ez }(z-y).$$
Applying Lemma \ref{LEM7.11} again, we conclude that
 $$\|H(Du)\|_{L^\fz(B(x,\frac{t_\ez}2))}\le \widehat S^+u(x)+\ez.$$
This, after sending $\ez\to0$, implies that
$Su(x)\le \widehat{S}^+u(x)$.  Thus $Su(x)= \widehat{S}^+u(x)$
for $x\in U$.
Similarly, we can also show $Su(x)= -\widehat{S}^-u(x)$ for $x\in U$.
\end{proof}

 \begin{lem}\label{LEMconeincreasing} For $n\ge 2$,
 assume $H$ satisfies \eqref{assh0}. For $U\Subset\rn$,
let $u\in CC_H(U)$. Then for any $x\in U$ and $0<r<\dist(x,\partial U)$,
 there exists $x_r\in\partial B(x,r)$ such that
 $$u(x_r)-u(x)=C^H_{\widehat {S}^+_{r}u(x)}(x_r-x)
 \ \mbox{and}\ \widehat{S}u(x_r)\ge \widehat S^+_{r}u(x).$$
 \end{lem}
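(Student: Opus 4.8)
The plan is to mimic the standard "maximal point on a sphere'' argument used for comparison with cones, adapted here to the slope functions $\widehat S^\pm$ defined via the cone functions $C^H_k$. Fix $x\in U$ and $0<r<\dist(x,\partial U)$, and write $k:=\widehat S^+_r u(x)$. By definition of $\widehat S^+_r u(x)$ as an infimum, we have $u(y)-u(x)\le C^H_k(y-x)$ for all $y\in\partial B(x,r)$ (this uses that $C^H_{k'}\to C^H_k$ locally uniformly as $k'\downarrow k$, which follows from the explicit formula $C^H_k(z)=\sup_{H(p)\le k}p\cdot z$ and coercivity of $H$). First I would argue that this inequality must be an equality at some point $x_r\in\partial B(x,r)$: if $u(y)-u(x)< C^H_k(y-x)$ strictly on the compact set $\partial B(x,r)$, then by compactness there is $\eta>0$ with $u(y)-u(x)\le C^H_k(y-x)-\eta|y-x|\le C^H_{k-\eta'}(y-x)$ for some $\eta'>0$ and all $y\in\partial B(x,r)$, where the last step uses that $C^H_k(z)-\eta|z|$ can be absorbed into a slightly smaller cone by continuity of $k\mapsto C^H_k$; this would contradict minimality of $k=\widehat S^+_r u(x)$. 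Hence there exists $x_r\in\partial B(x,r)$ with $u(x_r)-u(x)=C^H_{\widehat S^+_r u(x)}(x_r-x)$. I expect the delicate point here to be making the "strict inequality implies smaller cone suffices'' step rigorous, i.e. quantifying how $C^H_k$ depends on $k$; the cleanest route is to note $C^H_{k}(z)-C^H_{k-s}(z)\to 0$ uniformly on the unit sphere as $s\to 0^+$ and $C^H_{k}$ is $1$-homogeneous, so a uniform gap $\eta|z|$ on $\partial B(x,r)$ indeed dominates $C^H_k-C^H_{k-s}$ for small $s$.

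Next I would establish the second assertion, $\widehat S u(x_r)\ge \widehat S^+_r u(x)$. Using $u\in CC_H(U)$ and Lemma \ref{LEMcone}, it suffices to show $\widehat S^+_\rho u(x_r)\ge k$ for all small $\rho>0$, since $\widehat S u(x_r)=\lim_{\rho\to 0}\widehat S^+_\rho u(x_r)$ and $\rho\mapsto\widehat S^+_\rho u(x_r)$ is monotone increasing. The idea is to exploit the equality $u(x_r)-u(x)=C^H_k(x_r-x)$ together with subadditivity and $1$-homogeneity of the cone function: pick $p_0$ with $H(p_0)\le k$ realizing $C^H_k(x_r-x)=p_0\cdot(x_r-x)$ (the sup is attained by coercivity). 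For a point $y$ on the ray from $x$ through $x_r$ beyond $x_r$, at distance $\rho$ from $x_r$, one has $C^H_k(y-x)=C^H_k(x_r-x)+C^H_k(y-x_r)$ because $C^H_k$ is linear in the direction $x_r-x$ along that ray (it equals $p_0\cdot(\cdot-x)$ there since $C^H_k\ge p_0\cdot(\cdot)$ always and equality holds at $x_r$). Combined with the comparison-with-cones bound $u(y)-u(x)\le C^H_k(y-x)$ valid for $y\in\overline{B(x,r+\rho)}$ whenever $r+\rho<\dist(x,\partial U)$ — which in turn comes from $\widehat S^+_{r+\rho}u(x)\ge\widehat S^+_r u(x)=k$ being the wrong direction, so instead I use the monotonicity the other way — hmm, let me restate.

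The correct mechanism for the second inequality: apply Lemma \ref{LEM7.11} together with $u\in CC_H$. Since $\widehat S^+_r u(x)=k$, comparison with cones from above gives $u(y)\le u(x)+C^H_k(y-x)$ for $y\in\overline{B(x,r)}$, hence in particular along the segment $[x,x_r]$ we get, for $z=x+t(x_r-x)/|x_r-x|$ with $0\le t\le r$, that $u(z)-u(x)\le C^H_k(z-x)=t\,p_0\cdot\frac{x_r-x}{|x_r-x|}$; but also $u(x_r)-u(z)\le C^H_k(x_r-z)=(r-t)\,p_0\cdot\frac{x_r-x}{|x_r-x|}$, and adding these to the equality at $x_r$ forces both to be equalities, so $u(z)-u(x)=C^H_k(z-x)$ along the whole segment, and moreover $u(x_r)-u(z)=C^H_k(x_r-z)$. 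Then for any $\rho$ with $0<\rho<\dist(x_r,\partial U)$, choosing $z=z(\rho)\in[x,x_r]$ with $|z-x_r|=\rho$ gives a point in $\partial B(x_r,\rho)$ with $u(x_r)-u(z)=C^H_k(x_r-z)$, so $\widehat S^+_\rho u(x_r)\ge k$; letting $\rho\to 0$ and invoking Lemma \ref{LEMcone} yields $\widehat S u(x_r)=\widehat S^+ u(x_r)\ge k=\widehat S^+_r u(x)$, as desired. The main obstacle I anticipate is purely the first part — proving the $\sup$ over $\partial B(x,r)$ is attained as an equality — since it requires a careful continuity argument for the family $\{C^H_k\}$; the second part is then a routine convexity/subadditivity chase. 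I would also double-check the edge case $k=0$ (where $C^H_0\equiv 0$ and $u$ is locally constant near $x$ along relevant directions), which is handled trivially.
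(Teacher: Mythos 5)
Your first step (existence of $x_r\in\partial B(x,r)$ with $u(x_r)-u(x)=C^H_k(x_r-x)$, $k:=\widehat S^+_ru(x)$) is essentially the paper's, with the continuity-in-$k$ of $C^H_k$ filled in correctly (for $k>0$; note that for $k=0$ your parenthetical $C^H_0\equiv 0$ is not guaranteed under \eqref{assh0} alone, since $H^{-1}(0)$ need not be $\{0\}$, and attainment there really needs the comparison property rather than just the infimum definition). The genuine gap is in the second part. Your key inequality $u(x_r)-u(z)\le C^H_k(x_r-z)$ for $z\in[x,x_r]$ is not available: Lemma \ref{LEM7.11} would deliver it only if you knew $H(Du)\le k$ a.e.\ in $B(x,r)$, but $k=\widehat S^+_ru(x)$ only controls the growth of $u$ from the single vertex $x$, and in general $\|H(Du)\|_{L^\infty(B(x,r))}>\widehat S^+_ru(x)$. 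Worse, that inequality is equivalent (via $C^H_k(x_r-x)=C^H_k(z-x)+C^H_k(x_r-z)$ along the ray and the endpoint equality) to the very statement you want to conclude, namely $u(z)-u(x)=C^H_k(z-x)$ on the whole segment; so the step begs the question. And the conclusion itself is false in general: in the model case $H(p)=|p|^2$ it would force $\rho\mapsto\max_{\partial B(x,\rho)}u$ to be linear on $[0,r]$ and every point of $[x,x_r]$ to be a maximum point at its radius, which fails for generic infinity harmonic functions (that map is convex, typically strictly). There is also a direction mix-up at the end: exhibiting a point $z\in\partial B(x_r,\rho)$ where $u$ \emph{drops} by the cone amount bounds $-\widehat S^-_\rho u(x_r)$, not $\widehat S^+_\rho u(x_r)$.

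The paper's mechanism avoids all of this: it takes $x_\theta=\theta x+(1-\theta)x_r$ and plays two comparisons against each other, namely $u(x_\theta)\le u(x)+C^H_k(x_\theta-x)=u(x_r)-\theta C^H_k(x_r-x)$ (cone from $x$, plus the endpoint equality and homogeneity), and $u(x_r)\le u(x_\theta)+C^H_{\widehat S^+_{R-\theta r}u(x_\theta)}(x_r-x_\theta)$ (cone from $x_\theta$ with \emph{its own}, unknown, slope, valid since $|x_r-x_\theta|=\theta r\le R-\theta r$). Adding gives $C^H_k(x_r-x)\le C^H_{\widehat S^+_{R-\theta r}u(x_\theta)}(x_r-x)$, hence $\widehat S^+_ru(x)\le\widehat S^+_{R-\theta r}u(x_\theta)\le\widehat S^+_Ru(x_\theta)$ by strict monotonicity of $k\mapsto C^H_k(z)$ and monotonicity in the radius, and then one passes $\theta\to0$, $R\to0$ and uses Lemma \ref{LEMcone}. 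So no claim about the behaviour of $u$ along the interior of the segment is made or needed; your argument would need to be replaced by this (or an equivalent) sandwich to close the gap.
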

\begin{proof}  By definition of $C^H_k(\cdot)$, there exists
$x_r\in\partial B(x,r)$ such that
$$u(x_r)-u(x)={C}^H_{\widehat S^+_{r}u(x)}(x_r-x).$$
For $0<\theta\le 1$, let $x_\theta=\theta x+(1-\theta)x_r$.
Then we have
\begin{align}\label{sss1}
u(x_\theta)\le u(x)+{C}^H_{\widehat S^+_r u(x)} (x_\theta-x)
=u(x)+(1-\theta){C}^H_{\widehat S^+_r u(x)} (x_r-x)=
u(x_r)-\theta {C}^H_{\widehat S^+_r u(x)} (x_r-x).
\end{align}

On the other hand, for any $0<R<{\rm dist}(x_r,\partial U)$
we can choose a sufficiently small $0<\theta< \frac{R}{2r}$ such that
\begin{align}\label{sss2}
u(x_r)\le u(x_\theta)+{C}^H_{\widehat S^+_{R-\theta r} u(x_\theta)}(x_r-x_\theta)=
u(x_\theta)+\theta{C}^H_{\widehat S^+_{R-\theta r} u(x_\theta)}(x_r-x).
\end{align}
Combining \eqref{sss2} with \eqref{sss1}, we obtain
$$\widehat S^+_r u(x)\le \widehat S^+_{R-\theta r} u(x_\theta)\le
\widehat S^+_{R} u(x_\theta).$$
Sending $\theta\to 0$ first and then $R\to 0$, we conclude that
 $$Su(x_r)=\widehat S^+ u(x_r)\ge \widehat S^+_r u(x).$$
 This completes the proof.
\end{proof}

\begin{cor}\label{llg} For $n\ge 2$, assume $H$ satisfies \eqref{assh0}.
If $u\in CC_H(\rn)$ has a linear growth at the infinity, then
$\|H(Du)\|_{L^\fz(\rn)}<\fz$.
\end{cor}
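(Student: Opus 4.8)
The plan is to argue by contradiction, using Lemma \ref{LEMconeincreasing} to exhibit a point where the slope is arbitrarily large and then to propagate this slope along a ray in a way that forces super-linear growth of $u$. Suppose $\|H(Du)\|_{L^\fz(\rn)}=\fz$. By \eqref{E-2}, $\sup_{x\in\rn}Su(x)=\fz$, so for each $M>0$ there is a point $x_M\in\rn$ with $Su(x_M)>M$; equivalently (since $u\in CC_H(\rn)$ and $\dist(x_M,\partial\rn)=\fz$), $\widehat S^+_r u(x_M)>M$ for all $r>0$, by the monotonicity in Lemma \ref{LEMcone}. Fix such an $x_M$ and apply Lemma \ref{LEMconeincreasing} iteratively: starting from $x_0:=x_M$, we build a sequence of points $x_j\in\rn$ with $|x_{j+1}-x_j|=1$ (radius $r=1$ at each step),
\[
u(x_{j+1})-u(x_j)=C^H_{k_j}(x_{j+1}-x_j), \qquad k_j:=\widehat S^+_1 u(x_j),
\]
and $\widehat S u(x_{j+1})\ge \widehat S^+_1 u(x_j)$, hence $k_{j+1}\ge k_j\ge \cdots\ge k_0\ge M$ (using again that $\widehat S u(x_{j+1})=Su(x_{j+1})\ge \widehat S^+_1 u(x_{j+1})$ and that $\widehat S^+_\cdot$ is monotone increasing in the radius, so $\widehat S u \ge \widehat S^+_1 u$ pointwise).

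Next I would turn the cone increments into a lower bound on $u$. Since $C^H_k(x)=\sup_{H(p)\le k}p\cdot x\ge |x|\max\{p\cdot \tfrac{x}{|x|}: H(p)\le k\}$, and since $H$ is superlinear with $H(0)=0$ (assumption \eqref{assh0}), for any unit vector $\nu$ the quantity $\sup\{p\cdot\nu: H(p)\le k\}=C^H_k(\nu)\to\fz$ as $k\to\fz$, uniformly in $\nu$ by coercivity; in particular there is a function $\phi(k)\to\fz$ with $C^H_k(\nu)\ge \phi(k)$ for all unit $\nu$. Therefore
\[
u(x_N)-u(x_0)=\sum_{j=0}^{N-1}C^H_{k_j}(x_{j+1}-x_j)\ge \sum_{j=0}^{N-1}\phi(k_j)\ge N\,\phi(M),
\]
while $|x_N-x_0|\le N$ by the triangle inequality. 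Hence
\[
\frac{u(x_N)}{|x_N-x_0|}\ge \phi(M)-\frac{|u(x_0)|}{N}\ \xrightarrow[N\to\fz]{}\ \phi(M).
\]
Letting $M\to\fz$ (with a fresh base point $x_M$ for each $M$), we contradict the linear growth bound \eqref{LG}, which forces $|u(x)|/|x|$ to stay bounded as $|x|\to\fz$. This contradiction proves $\|H(Du)\|_{L^\fz(\rn)}<\fz$.

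The main obstacle I anticipate is making the chaining step fully rigorous: one must be careful that the points $x_j$ produced by Lemma \ref{LEMconeincreasing} actually march off to infinity (which is automatic here because consecutive points are at distance exactly $1$, so $|x_N-x_0|$ is comparable to $N$ — I only need the upper bound $|x_N-x_0|\le N$ and that $u(x_N)$ grows at least linearly in $N$, and the precise location of $x_N$ is irrelevant), and that the lower bound $C^H_{k}(\nu)\ge \phi(k)$ is genuinely uniform over unit vectors $\nu$ with $\phi(k)\to\fz$. This uniformity is exactly where superlinearity of $H$ (equivalently, boundedness of every sublevel set $\{H\le k\}$ together with the fact that these sublevel sets exhaust $\rn$) enters: $C^H_k(\nu)=\max\{p\cdot\nu:H(p)\le k\}$ is the support function of the convex body $\{H\le k\}$ in direction $\nu$, and since these bodies are increasing in $k$ and $\bigcup_k\{H\le k\}=\rn$, their inradius tends to $\fz$, giving $\phi(k)=\mathrm{inradius}\ \{H\le k\}\to\fz$. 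A minor alternative, if one prefers to avoid the uniformity discussion, is to fix a single coordinate direction $e_1$ once and for all and only track the $e_1$-displacement and the $e_1$-component of the cone increments; but the version above is cleaner.
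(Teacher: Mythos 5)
Your argument is essentially correct, but it follows a genuinely different (and heavier) route than the paper's. The paper's proof is direct and two lines long in spirit: linear growth gives $|u(x)-u(y)|\le (2K+1)|x-y|$ for $y$ on a sphere of radius comparable to $|x|$ about $x$; choosing $k$ with $(2K+1)|z|\le C^H_k(z)$, the cone comparison property fills this bound into the whole ball, and Lemma \ref{LEM7.11} then yields $H(Du)\le k$ a.e.\ with an explicit $k=k(K,H)$. You instead argue by contradiction, start from a point of large slope, and run the discrete gradient flow of Lemma \ref{LEMconeincreasing} with unit steps, using the monotonicity $k_{j+1}\ge k_j$ and the uniform lower bound $C^H_k(\nu)\ge\phi(k)\to\infty$ (which you justify correctly: the sublevel sets $\{H\le k\}$ are convex, contain $0$, and exhaust $\rn$, so their support functions are bounded below by radii tending to infinity) to force growth of rate $\phi(M)$ per step, contradicting \eqref{LG}. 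This is exactly the chaining technique the paper uses later in the proof of Lemma \ref{xlength}, so it is in the spirit of the paper, but for this corollary it is an indirect argument that yields no explicit bound, whereas the paper's direct comparison does.

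Three small repairs you should make. First, your parenthetical justification of $k_{j+1}\ge k_j$ has the monotonicity backwards: since $t\mapsto\widehat S^+_t u$ is increasing and $\widehat S u=\lim_{t\to0}\widehat S^+_t u$, one has $\widehat S u\le\widehat S^+_1 u$ (not $\ge$); the correct chain is $k_{j+1}=\widehat S^+_1u(x_{j+1})\ge\widehat S u(x_{j+1})\ge\widehat S^+_1u(x_j)=k_j$, using Lemma \ref{LEMconeincreasing} for the second inequality, so the conclusion stands. Second, the remark that unit steps make $|x_N-x_0|$ ``comparable to $N$'' is false (the chain could return near $x_0$); as you then note, only $|x_N|\le|x_0|+N$ together with $u(x_N)\ge u(x_0)+N\phi(M)$ is needed, and it is cleaner to state the contradiction as $u(x_0)+N\phi(M)\le C(1+|x_0|+N)$ for all $N$, which fails once $\phi(M)>C$. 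Third, Lemmas \ref{LEMcone} and \ref{LEMconeincreasing} are stated for bounded domains $U\Subset\rn$, so either observe their proofs work verbatim on $\rn$ or apply them on balls $B(x_0,N+2)$, which suffices since $\widehat S^+_1$ and $\widehat S$ are local quantities; similarly, invoking $Su$ and $\|H(Du)\|_{L^\fz}$ presupposes that a $CC_H$ function is locally Lipschitz, which is standard (and implicit in those lemmas) but worth a word.
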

\begin{proof}
Since there exists $K>0$ such that $|u(x)|\le K(1+|x|)$ for all $x \in\rn$,
we have
$$\mbox{$|u(x)-u(y)|\le  K(2+|x|+|y|)\le (2K+1)|x-y|$
whenever $x\in\rn$ and $ |y-x|=|x|+1$}.$$
 Since there exists $k>0$, depending on
 $K$ and $H$, such that
 $$(2K+1)|z|\le C^H_{k}(z), \forall z\in\rn,$$
we obtain that
$$|u(x)-u(y)|\le  C^H_{k}(y),\ \forall y\in \partial B(x, |x|+2),$$
it follows from $u\in CC_H(\rn)$ that
$$|u(x)-u(y)|\le  C^H_{k}(y),\ \forall y\in B(x, |x|+2).$$
This, combined with Lemma  \ref{LEM7.11}, implies
 $$H(Du)(x)\le k, \mbox{ for almost every } y\in B(x, |x|+2).$$
 Thus  $\|H(Du)\|_{L^\fz(\rn)}\le k<\fz$.
\end{proof}

\section{Geometric and analytic properties of $H$,  $L$, and cone functions}

In this section, we will develop
both geometric and analytic properties of $H$ and $L$, and
$C^H_k(\cdot)$.

\subsection{Properties of $H$ and $L$}
For $n\ge 2$, assume $H$ satisfies \eqref{assh0}.
Let $L$ be the convex conjugate of $H$. Then $L$ also satisfies \eqref{assh0}.
For any $q\in\rn$, denote by $\partial L(q)$ the subdifferential set of $L$ at $q$,
 that is,
$$p\in\partial L(q) \Longleftrightarrow L(q')-L(q)\ge \langle p, q'-q\rangle,
\ \forall q'\in\rn.$$
The subdifferential set of $H$, $\partial H(p)$,  at $p\in\rn$, can be similarly defined.

Recall that in $\rn$, a $1$-simplex is a line segment, or the convex hull of $2$ distinct points, and for $2\le d\le n$, a $d$-simplex is  the convex hull of a $(d-1)$-simplex  and a point, which is not contained in the $(d-1)$-dimensional affine plane determined by the $(d-1)$-simplex.

Proposition \ref{LEMC3} on the geometric characterization of $\partial L$,
when $H$ is not constant on any $d$-simplex for $d=1,2$,
plays a key role in the proof of Theorem \ref{LEMC4}.

 \begin{prop}\label{LEMC3} For $n\ge 2$, assume
 $H$ satisfies \eqref{assh0}. Then
 \begin{enumerate}
 \item[(i)]   The following statements are equivalent:
    \begin{enumerate}
   \item[(i-a)] $ H$ is not constant in any $1$-simplex.
    \item[(i-b)] for any $q\in\rn$,  $\partial L\left(q\right)$
is either a single point, or a line segment on which
$H$ is strictly monotone.
 \end{enumerate}
  \item[(ii)]   The following statements are equivalent:
    \begin{enumerate}
   \item[(ii-a)]
   $ H$ is not constant in any $2$-simplex.
\item[(ii-b)] for any $q\in\rn$,  $\partial L\left(q\right)$ must be  one of the following:
   \begin{enumerate}
  \item[(ii-b-1)] a single point;
  \item[(ii-b-2)] a bounded closed line segment;
  \item[(ii-b-3)] a bounded closed convex set in a $2$-dimensional affine plane,
  whose boundary consists of $4$ simple ``curves"
  $\gz_0,\gz_1,\gz_2,\gz_3$ oriented in order so that
  $\gz_0$ (resp. $\gz_2$) is either a single point
  or a line-segment on which $H$ attains the minimum (resp. maximum)
  in  $\partial L(q)$, and $\gz_1$ (resp.  $ \gz_3$) is such
  that $H$ is strictly monotone increasing (resp. decreasing).
    \end{enumerate}
    \end{enumerate}
\end{enumerate}
 \end{prop}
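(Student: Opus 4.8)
The plan is to work directly from the duality between subdifferentials: for convex $L$ and $H=L^*$ we have $p\in\partial L(q)\iff q\in\partial H(p)\iff L(q)+H(p)=\langle p,q\rangle$. Hence $\partial L(q)$ is exactly the set of $p$ at which the affine function $p'\mapsto \langle p',q\rangle - L(q)$ supports the graph of $H$; equivalently, $\partial L(q)$ is a face of the (convex, closed, proper) function $H$ — it is the set where $H(p)-\langle p,q\rangle$ attains its minimum. The key structural fact I will use repeatedly is that $\partial L(q)$ is a closed convex set, and on it the linear functional $p\mapsto\langle p,q\rangle$ equals $H(p)+L(q)$, so \emph{the level sets of $H$ restricted to $\partial L(q)$ are exactly the slices of $\partial L(q)$ by hyperplanes orthogonal to $q$}. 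This single observation reduces everything to the geometry of a convex set sliced by parallel hyperplanes.

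For part (i): the implication (i-b)$\Rightarrow$(i-a) is the easy direction — if $H$ were constant, equal to $c$, on a nondegenerate segment $[a,b]$, pick $q\in\partial H(p_0)$ for $p_0$ in the relative interior of $[a,b]$; then by the duality above $[a,b]\subset\partial L(q)$ (since the supporting affine function at $p_0$ also supports at every point of $[a,b]$, using that $H$ is constant there and $q$ is orthogonal to $b-a$ — this needs the little computation that $\langle q,b-a\rangle=0$, which follows from $q\in\partial H$ at an interior point where $H$ is locally constant along $b-a$), and $H$ is constant on $[a,b]$, contradicting (i-b). For (i-a)$\Rightarrow$(i-b): suppose $\partial L(q)$ is not a single point; it is a closed convex set of dimension $\ge 1$. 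If it contained two segments not both parallel to a common line, or equivalently if $\dim\partial L(q)\ge 2$ or $\partial L(q)$ is a segment on which $H$ is \emph{not} strictly monotone, I claim $H$ is constant on some segment. Indeed, if $\partial L(q)$ has dimension $\ge 2$ it contains a segment parallel to $q^\perp$ (any generic 2-dimensional convex set contains a chord orthogonal to any prescribed direction), and on such a segment $\langle p,q\rangle$ is constant, hence $H$ is constant there. If $\partial L(q)$ is a segment $[p_1,p_2]$ on which $H$ is not strictly monotone, then (being convex in one variable and not strictly monotone) $H$ is constant on a subsegment. Either way (i-a) fails. This proves (i).

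For part (ii), the skeleton is the same: (ii-b)$\Rightarrow$(ii-a) is immediate since none of the three listed shapes permits $H$ to be constant on a $2$-simplex (in case (ii-b-3), $H$ constant on a $2$-simplex would force a $2$-dimensional slice at a single $H$-value, but the $H$-level slices are $1$-dimensional hyperplane sections of a planar convex set, hence at most segments). For (ii-a)$\Rightarrow$(ii-b): since $H$ is not constant on any $1$-simplex either, part (i) already forces $\dim\partial L(q)\le 2$ — if it were $\ge 3$ it would contain a $2$-dimensional slice orthogonal to $q$, on which $H$ is constant, contradicting (ii-a) (indeed contradicting (i-a)). So $\partial L(q)$ is a point, a segment, or a $2$-dimensional planar convex body $K$. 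In the last case, parametrize $K$ by the linear functional $h:=\langle\cdot,q\rangle$, whose range is an interval $[m,M]=[H_{\min}+L(q),\,H_{\max}+L(q)]$ over $K$; the level sets $h^{-1}(t)\cap K$ are the parallel chords, each a point or a segment, the extreme ones $h^{-1}(m)\cap K=:\gamma_0$ and $h^{-1}(M)\cap K=:\gamma_2$ being the min/max faces. The boundary $\partial K$ splits at $\gamma_0,\gamma_2$ into two arcs; on each arc the function $h$ — hence $H$ — is strictly monotone, because $h$ is an affine function on the plane and a boundary arc between the two extreme chords of $K$ meets each intermediate chord exactly once (this is where I expect the only real work: a clean argument that each of the two boundary arcs crosses each parallel line $h=t$, $m<t<M$, in precisely one point, which is a standard but slightly fussy planar-convexity fact — use that $K$ is the region between its upper and lower boundary graphs over the interval $[m,M]$ after choosing coordinates with $q$-direction horizontal). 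Orienting $\gamma_0,\gamma_1,\gamma_2,\gamma_3$ consecutively then gives exactly (ii-b-3). The main obstacle is thus purely this last planar lemma about boundary arcs versus parallel chords; everything else is bookkeeping with convex duality and the "slices orthogonal to $q$" principle.
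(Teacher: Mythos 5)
Your strategy is sound and genuinely different in mechanism from the paper's. The paper works through its Lemma \ref{LEMconst} plus an interpolation-of-values argument: any three points of $\partial L\left(q\right)$ have pairwise distinct $H$-values, and matching the middle value by a convex combination of the outer two forces collinearity (and, in part (ii), coplanarity via an auxiliary $2$-simplex of equal $H$-values); strict monotonicity along $\gz_1,\gz_3$ is then obtained by contradiction, again producing a $2$-simplex of constancy. You instead exploit, once and for all, that on $\partial L\left(q\right)$ one has $H(p)=\langle p,q\rangle-L(q)$, so $H$-level sets inside $\partial L\left(q\right)$ are slices by hyperplanes orthogonal to $q$: if $\dim\partial L\left(q\right)\ge 2$ (resp.\ $\ge 3$) the direction space of its affine hull meets $q^{\perp}$ in a line (resp.\ a plane), producing a nondegenerate chord (resp.\ a $2$-dimensional slice) on which $H$ is constant --- no ``genericity'' is needed, just this dimension count --- and in the planar case the whole of (ii-b-3) reduces to the fact that for $m<t<M$ the level line meets $K$ in a nondegenerate chord whose relative interior lies in the relative interior of $K$, hence meets $\partial K$ in exactly two points, while the intermediate value theorem gives at least one on each arc; so exactly one on each arc, which is the strict monotonicity of $H$ along $\gz_1,\gz_3$. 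This is precisely the ``fussy planar fact'' you deferred, and it is routine; your route is arguably cleaner and more unified than the paper's.

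Two points need repair. First, in (ii-b)$\Rightarrow$(ii-a) you argue as if a $2$-simplex of constancy automatically sits inside some $\partial L\left(q\right)$; but (ii-b) only constrains the sets $\partial L\left(q\right)$, so you must first produce such a $q$. The paper takes $q\in\partial H$ at the barycenter and applies Lemma \ref{LEMconst} to segments through it; your own computation in (i-b)$\Rightarrow$(i-a) extends verbatim ($q$ is orthogonal to both edge directions of the simplex, hence $q\in\partial H(p)$ for every $p$ in it, so the simplex lies in $\partial L\left(q\right)$), but this step must be written. Second, the opening clause of your (ii-a)$\Rightarrow$(ii-b), ``since $H$ is not constant on any $1$-simplex either, part (i) already forces $\dim\partial L\left(q\right)\le 2$'', is incorrect: (ii-a) does not imply (i-a) --- cases (ii-b-2) and (ii-b-3) exist precisely because $H$ may be constant on segments --- and part (i) under (i-a) would give $\dim\le 1$, not $\le 2$. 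Your parenthetical slicing argument is the correct one and should simply replace that clause. Finally, record the small facts you use tacitly: $\partial L\left(q\right)$ is bounded and closed (Lemma \ref{LEM3.1} (iv)), so the one-dimensional case is a genuine segment rather than a ray or line, and in the planar case $\langle\cdot,q\rangle$ is non-constant on $\partial L\left(q\right)$ by (ii-a), so $m<M$ and the two boundary arcs are nondegenerate.
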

When $H$ is strictly convex,  we have
\begin{prop}\label{LEMstrcon} For $n\ge 2$,
assume $H$ satisfies \eqref{assh0}. The following statements are equivalent:
    \begin{enumerate}
   \item[(i)] $ H$ is strictly convex.
   \item[(ii)] For any $q\in\rn$, $\partial L(q)$ is a singleton.
    \item[(iii)] $L\in C^1(\rn)$.
 \end{enumerate}
 \end{prop}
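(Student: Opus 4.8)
The plan is to prove Proposition \ref{LEMstrcon} by establishing the cycle of implications (i) $\Rightarrow$ (ii) $\Rightarrow$ (iii) $\Rightarrow$ (i), using the duality between $H$ and $L$ together with standard facts from convex analysis (the subdifferential of a finite convex function is nonempty and compact, $p\in\partial L(q)\Leftrightarrow q\in\partial H(p)\Leftrightarrow H(p)+L(q)=\langle p,q\rangle$, and a convex function is differentiable at a point precisely when its subdifferential there is a singleton).

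\emph{Proof of (i) $\Rightarrow$ (ii).} Fix $q\in\rn$ and suppose $p_1,p_2\in\partial L(q)$. Then the Fenchel equality gives $H(p_i)+L(q)=\langle p_i,q\rangle$ for $i=1,2$. For $t\in[0,1]$ set $p_t=(1-t)p_1+tp_2$; since $\partial L(q)$ is convex, $p_t\in\partial L(q)$, so $H(p_t)+L(q)=\langle p_t,q\rangle=(1-t)\langle p_1,q\rangle+t\langle p_2,q\rangle=(1-t)(H(p_1)+L(q))+t(H(p_2)+L(q))$, whence $H(p_t)=(1-t)H(p_1)+tH(p_2)$. Thus $H$ is affine on the segment $[p_1,p_2]$, and strict convexity of $H$ forces $p_1=p_2$. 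Hence $\partial L(q)$ is a singleton.

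\emph{Proof of (ii) $\Rightarrow$ (iii).} If $\partial L(q)$ is a singleton for every $q$, then $L$ is differentiable everywhere by the classical characterization of differentiability via the subdifferential. A finite convex function on $\rn$ that is differentiable everywhere is automatically $C^1$ (the gradient map is continuous, since for convex functions the subdifferential is upper semicontinuous as a set-valued map and collapses to $\{DL\}$), so $L\in C^1(\rn)$.

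\emph{Proof of (iii) $\Rightarrow$ (i).} Suppose $H$ is not strictly convex; then there exist $p_1\ne p_2$ and $t_0\in(0,1)$ with $H((1-t_0)p_1+t_0p_2)=(1-t_0)H(p_1)+t_0H(p_2)$, which by convexity forces $H$ to be affine on all of $[p_1,p_2]$, say $H(p)=\langle q_0,p\rangle+c$ for $p\in[p_1,p_2]$ for some $q_0\in\rn$, $c\in\rr$. I claim $q_0\in\partial H(p)$ for every $p$ in the open segment; indeed, for such $p$ and any $p'\in\rn$, writing $p'=p+v$ and using that $p\pm sv'\in[p_1,p_2]$ for small $s$ when $v'$ is the direction of the segment pins down the linear part, and convexity in the transverse directions gives the supporting inequality $H(p')\ge H(p)+\langle q_0,p'-p\rangle$. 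Therefore $q_0\in\partial H(p)$ for all $p$ in the relative interior of $[p_1,p_2]$, equivalently $\partial L(q_0)\supset \mathrm{relint}[p_1,p_2]$, so $\partial L(q_0)$ is not a singleton and $L$ fails to be differentiable at $q_0$, contradicting (iii). This closes the cycle.

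The main obstacle, and the only place requiring care, is the transverse supporting-hyperplane argument in (iii) $\Rightarrow$ (i): one must produce a single subgradient $q_0$ valid at an interior point $p$ of the degenerate segment, reconciling the fact that $H$ is exactly affine along the segment with mere convexity in the complementary directions. A clean way to do this is to note that $H$ affine on $[p_1,p_2]$ means $[p_1,p_2]\subset H^{-1}(c')$-type level behavior is irrelevant; instead pick any $q_0$ in the (nonempty, compact) set $\partial H(p)$ at an interior point $p$ — such $q_0$ must satisfy $\langle q_0,p_i-p\rangle\le H(p_i)-H(p)$ for $i=1,2$, and since $\langle q_0,(p_1-p)+(p_2-p)\rangle$ is a convex combination that must equal $(H(p_1)-H(p))+(H(p_2)-H(p))$ up to the segment parametrization, both inequalities are forced to be equalities, so $q_0$ is simultaneously a subgradient at $p_1$, at $p_2$, and at every point between, giving the desired non-singleton $\partial L(q_0)$. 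All other implications are routine convex analysis, so the write-up stays short.
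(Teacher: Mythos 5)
Your proposal is correct and follows essentially the same route as the paper: the cycle (i)$\Rightarrow$(ii)$\Rightarrow$(iii)$\Rightarrow$(i), with (i)$\Rightarrow$(ii) via the Fenchel equality forcing $H$ to be affine on $[p_1,p_2]$, and (iii)$\Rightarrow$(i) via exhibiting a common subgradient of $H$ at both endpoints of a degenerate segment, so that $\partial L(q_0)$ fails to be a singleton. The only (cosmetic) difference is that for (ii)$\Rightarrow$(iii) you cite the classical convex-analysis facts (singleton subdifferential $\Leftrightarrow$ differentiability, and an everywhere differentiable convex function is $C^1$), whereas the paper proves this step directly by a short contradiction argument using the closedness of the graph of $\partial L$ from its Lemma 3.1\,(iv).
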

We will establish in Propositions \ref{e-e6} and \ref{LEM4.5}
some analytic characterization of $H$, when $H$ is not
constant in any line segment, which will play an important role
in the proof of Theorem \ref{xprop}.

\begin{prop}\label{e-e6} For $n\ge 2$, assume
$H$ satisfies \eqref{assh0}. Then the following statements are equivalent:

\begin{enumerate}
\item[(i)]    $H$ is not constant in any line segment.

\item[(ii)] For each  $R\ge 1$ and each $\ez>0$, there exists
$\psi_R \left( \ez\right)>0 $  such that for any
$  v\in \overline{B\left(0,R\right)}$,
if
\begin{equation}\label{e-e6angle}
H\left(p+v\right)-H\left(p\right)\le \psi_R \left( \ez\right)\ \mbox{and}\ |\measuredangle \left(q,v\right)-\frac{\pi}2|  \le \psi_R \left( \ez\right)
\end{equation}
for some  $p\in \overline{B\left(0,R\right)},\  q\in\partial H\left(p'\right)$ and  $p'\in \overline{B(p,\psi_R(\ez))}$,  then  $|v|\le\ez.$ Here $\measuredangle(q,v)$
denotes the angle between $q$ and $v$.
  \end{enumerate}
 \end{prop}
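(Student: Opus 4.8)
The plan is to prove the two implications separately, with the substantive content in (i) $\Rightarrow$ (ii); the reverse implication (ii) $\Rightarrow$ (i) is a short contrapositive argument.

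For (ii) $\Rightarrow$ (i): Suppose $H$ is constant, equal to $c$, on a nondegenerate line segment $[a,b]$ with $a\neq b$. Write $v_0 = b-a \neq 0$, and pick $R\ge 1$ large enough that $[a,b]\subset \overline{B(0,R)}$ and $v_0\in\overline{B(0,R)}$. Take $p$ to be any interior point of $[a,b]$. Since $H$ is convex and attains the value $c$ all along $[a,b]$ while $p$ is in the relative interior, the supporting hyperplane at $p$ contains the direction $v_0$; hence every $q\in\partial H(p)$ is orthogonal to $v_0$, i.e. $\measuredangle(q,v_0)=\pi/2$ exactly. Moreover $H(p+tv_0)-H(p)=0$ for small $t>0$ (staying inside the segment), so for each $\ez$ both conditions in \eqref{e-e6angle} are satisfied with $v=tv_0$ for a fixed small $t$ (taking $p'=p$, $q\in\partial H(p)$), yet $|tv_0|$ is a fixed positive number; choosing $\ez<|tv_0|$ shows (ii) fails. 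Thus $\neg$(i) $\Rightarrow$ $\neg$(ii).

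For (i) $\Rightarrow$ (ii): I would argue by contradiction and a compactness/normal-families argument. Fix $R\ge1$ and suppose (ii) fails for some $\ez_0>0$. Then there exist sequences $v_j\in\overline{B(0,R)}$ with $|v_j|>\ez_0$, points $p_j\in\overline{B(0,R)}$, $p_j'\in\overline{B(p_j,1/j)}$, and $q_j\in\partial H(p_j')$ with $H(p_j+v_j)-H(p_j)\le 1/j$ and $|\measuredangle(q_j,v_j)-\pi/2|\le 1/j$. The vectors $q_j$ are bounded: since $H$ is convex and finite, $\partial H$ is locally bounded on $\overline{B(0,2R)}$, so passing to a subsequence $q_j\to q$, $v_j\to v$ with $|v|\ge\ez_0$, $p_j\to p$, $p_j'\to p$, all in $\overline{B(0,R)}$ (or $\overline{B(0,2R)}$ for $p'_j$). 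By upper semicontinuity (closed-graph property) of the subdifferential, $q\in\partial H(p)$; note $q\neq 0$ because $H$ is superlinear hence $0\in\partial H$ only at the unique minimizer $p=0$, and even there one can argue separately — actually since $v\neq 0$ and $\measuredangle(q_j,v_j)\to\pi/2$ makes sense only if $q_j\neq0$; I will handle the possibility $q=0$ by noting it forces $p=0$, where $\partial H(0)=\{0\}$ and the angle condition becomes vacuous, so instead I use the energy condition directly: passing to the limit in $H(p_j+v_j)-H(p_j)\le 1/j$ and using continuity of $H$ gives $H(p+v)\le H(p)$. Combined with convexity, $H$ is constant on $[p,p+v]$ (since $H(p+v)\le H(p)$ and by convexity $H$ on the segment lies below $\max$ of endpoints, and a separate lower bound from the subgradient inequality $H(p+v)\ge H(p)+\langle q,v\rangle$ forces $\langle q,v\rangle\le 0$; meanwhile $\langle q,v\rangle = 0$ in the limit from the angle condition, so $H(p+v)=H(p)$ and convexity pins $H$ to be affine, hence constant, on $[p,p+v]$). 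This contradicts (i) since $v\neq0$.

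The main obstacle I anticipate is making the limiting argument robust when $q_j$ degenerates to $0$ (equivalently $p_j\to 0$), because the angle $\measuredangle(q_j,v_j)$ is then ill-behaved; the fix above — extracting the conclusion $H(p+v)\le H(p)$ purely from the energy inequality \eqref{e-e6angle}, and using the subgradient inequality $H(p_j+v_j)\ge H(p_j')+\langle q_j, p_j+v_j-p_j'\rangle$ together with $\langle q_j,v_j\rangle = |q_j||v_j|\cos\measuredangle(q_j,v_j)\to 0$ to pin down the reverse inequality — sidesteps it, but one must be careful that the two pieces of information combine correctly and that the segment $[p,p+v]$ really is nondegenerate (guaranteed by $|v|\ge\ez_0>0$). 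A secondary technical point is verifying the local boundedness of $\partial H$ on a fixed ball and the closed-graph property; both are standard facts about convex functions finite on all of $\rn$, so I would simply cite them rather than reprove them.
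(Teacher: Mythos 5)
Your proposal follows essentially the same route as the paper: (ii)$\Rightarrow$(i) by exhibiting the segment direction together with an orthogonal subgradient at an interior point, and (i)$\Rightarrow$(ii) by contradiction, compactness, local boundedness and the closed-graph property of $\partial H$, and then the subgradient inequality combined with convexity to force $H$ to be constant on $[p,p+v]$. One small correction to the final step: convexity plus $H(p+v)=H(p)$ alone does not make $H$ affine on the segment (a convex function with equal endpoint values can dip in the middle); to get constancy you must apply the subgradient inequality at the intermediate points, $H(p+tv)\ge H(p)+t\langle q,v\rangle=H(p)$ for $t\in[0,1]$, which is exactly how the paper concludes and which is available to you since $q\in\partial H(p)$ and $\langle q,v\rangle=0$. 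Also, your parenthetical claim that $q=0$ would force $p=0$ with $\partial H(0)=\{0\}$ is not true in general under \eqref{assh0}, but you do not use it: your handling of a possible degeneration $q_j\to 0$ through the energy inequality and $\langle q_j,v_j\rangle\to 0$ is correct as written.
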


 For $R>0$, assume that $\psi_{R}(\epsilon)\ge0$ is
 monotone increasing and satisfies
 $\psi_R(\ez)\le \ez^2, \forall \ez\in(0,1)$.

\begin{prop}\label{LEM4.5} For $n\ge 2$, assume
 $H$ satisfies \eqref{assh0}. Then the following statements are equivalent:

\begin{enumerate}
\item[(i)]    $H$ is not  constant in any line segment.

\item[(ii)]

For each $R\ge1$ and each $\eta>0$,  we have
$$ \phi_R(\eta)= \min\left\{(p-e)\cdot \frac{q}{|q|}: H(p)=H(e)\le R , |p-e|\ge \eta, q\in\partial H(p)\right\}>0$$
\end{enumerate}
\end{prop}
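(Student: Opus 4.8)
The plan is to prove the two implications separately, with the forward direction (i) $\Rightarrow$ (ii) being the substantive one. For (ii) $\Rightarrow$ (i), I would argue by contraposition: suppose $H$ is constant, equal to some value $c$, on a nondegenerate line segment $[a,b]$ with $a\neq b$. Pick the midpoint $e=(a+b)/2$ and, say, $p=b$; then $H(p)=H(e)=c$, and since $H$ attains a minimum along $[a,b]$ at every interior point, the subdifferential $\partial H(p)$ for $p$ in the relative interior of $[a,b]$ contains a vector $q$ orthogonal to $b-a$ (because the directional derivatives of $H$ along $\pm(b-a)$ vanish there). More carefully, pick a point $p'$ in the relative interior with $|p'-e|\ge\eta_0$ for a suitable small $\eta_0$, and a $q\in\partial H(p')$ with $q\perp(b-a)$; then $(p'-e)\cdot q/|q|=0$ because $p'-e$ is parallel to $b-a$. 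Choosing $R\ge c$ and $\eta=\eta_0$, this shows the minimum defining $\phi_R(\eta)$ is $\le 0$, so (ii) fails. (One should also check the minimum in (ii) is actually attained, using that $\{H(p)=H(e)\le R\}$ and $\partial H(p)$ vary in a way that the relevant set is compact; this follows from coercivity of $H$ and upper semicontinuity of $p\mapsto\partial H(p)$ as a set-valued map, together with $\partial H(p)\neq\emptyset$ since $H$ is finite and convex. Note $q\neq 0$ on the feasible set because $H(0)=0$ is the strict minimum once $p\neq 0$, which holds as $|p-e|\ge\eta>0$ forces $p\neq e$; and along with superlinearity one rules out $0\in\partial H(p)$ unless $p=0$, but then $e$ would have to satisfy $H(e)=0$ too, placing a segment $[0,e]$-worth of data — handled by the same contraposition. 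I would state this compactness/attainment as a short preliminary lemma or fold it into the argument.)

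For the main implication (i) $\Rightarrow$ (ii), I would argue by contradiction. Suppose (ii) fails for some $R\ge 1$ and $\eta>0$: then either the minimum is $\le 0$ or the infimum is not attained and equals $0$. Using the attainment discussion above, we get sequences $e_j$, $p_j$ with $H(p_j)=H(e_j)\le R$, $|p_j-e_j|\ge\eta$, and $q_j\in\partial H(p_j)$ with $(p_j-e_j)\cdot q_j/|q_j|\to \phi_R(\eta)\le 0$. By coercivity of $H$ the points $e_j,p_j$ stay in a fixed ball; the unit vectors $q_j/|q_j|$ also lie in the unit sphere. Passing to subsequences, $e_j\to e$, $p_j\to p$, $q_j/|q_j|\to \nu$ with $|\nu|=1$; by continuity of $H$ and closedness of the graph of the subdifferential (upper semicontinuity of $\partial H$), $H(p)=H(e)\le R$, $|p-e|\ge\eta>0$, and $\nu\in\partial H(p)$ up to the positive scaling $|\nu|=1$ — more precisely $\nu$ is a limit of subgradients, hence (since $\partial H(p)$ is closed convex and the graph is closed) $\nu=\lim q_j/|q_j|$ with $q_j\to q_\infty$ possibly $=0$; if $q_\infty\neq 0$ then $\nu=q_\infty/|q_\infty|\in\partial H(p)$, while if $q_j\to 0$ then $0\in\partial H(p)$ forcing $p=0$ (minimum), contradicting $H(p)=H(e)$ and $|p-e|\ge\eta$ via the contraposition argument. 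So $\nu=q/|q|$ for some $q\in\partial H(p)$, and $(p-e)\cdot\nu = \phi_R(\eta)\le 0$. On the other hand, the defining inequality of $q\in\partial H(p)$ gives $H(e)\ge H(p)+q\cdot(e-p)$, i.e. $q\cdot(p-e)\ge H(p)-H(e)=0$, so $(p-e)\cdot\nu\ge 0$; combined with $(p-e)\cdot\nu\le 0$ we get $(p-e)\cdot q=0$. Thus $q\cdot(e-p)=0$ and $H(e)=H(p)$, which by the subgradient inequality at $p$ (and convexity) forces $H$ to be constant and equal to $H(p)$ on the whole segment $[e,p]$: indeed $H(\text{interior point})\le$ the line through the endpoint values $=H(p)$ by convexity, and $\ge H(p)+q\cdot(\cdot-p)=H(p)$ by the subgradient inequality since $q\perp(e-p)$. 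Since $e\neq p$, this contradicts (i).

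The step I expect to be the main obstacle is the attainment/compactness claim packaged into the passage to limits: one must ensure that along the minimizing sequence the subgradients $q_j$ do not "escape to the degenerate case" in a way that loses $\nu\in\partial H(p)$, and that $q_j/|q_j|$ is well-defined, i.e. $q_j\neq 0$. This forces us to handle the possibility $0\in\partial H(p_j)$ (i.e. $p_j$ a global minimizer, so $p_j=0$ by $H(0)=0=\min H$ and uniqueness of the minimizer — which is not automatic unless the minimal level set is a point; here it is, by (i) applied with a degenerate segment argument, or one simply excludes it because $|p_j-e_j|\ge\eta$ and $H(p_j)=H(e_j)$ cannot both hold with $p_j=0$ unless $e_j=0$ too). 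I would isolate a short lemma: \emph{under \eqref{assh0}, if $H$ is not constant on any line segment then $H^{-1}(0)=\{0\}$ and $0\notin\partial H(p)$ for $p\neq 0$}; with this in hand the compactness argument is clean. The remaining estimates — upper semicontinuity of $\partial H$, closedness of its graph — are standard convex-analysis facts that I would cite rather than prove.
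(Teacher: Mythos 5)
Your proposal is correct and follows essentially the same route as the paper: contraposition via a subgradient orthogonal to a segment on which $H$ is constant for (ii)$\Rightarrow$(i), and for (i)$\Rightarrow$(ii) a compactness/limit argument on a minimizing sequence, the subgradient inequality forcing $(p-e)\cdot q=0$, and then constancy of $H$ on $[p,e]$ to contradict (i). The only differences are cosmetic: you derive the orthogonality and the final constancy directly from the subgradient inequality (where the paper invokes its Lemma 3.5 and the structure of $\partial L$), and you are, if anything, more explicit than the paper about excluding $q=0$ and about the compactness/attainment issues, which the paper handles through its Lemma 3.1 and Corollary 3.6.
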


In order to prove the above results, we recall some basic properties
of $H$.

 \begin{lem}\label{LEM3.1} For $n\ge 2$, assume $H$ satisfies \eqref{assh0}.
 Then we have
 \begin{enumerate}
 \item[(i)] For any $p,q\in\rn$,
 $$q\in \partial H\left(p\right) \Longleftrightarrow
  H\left(p\right)+L\left(q\right)=\langle p,q\rangle  \Longleftrightarrow p\in \partial L\left(q\right).$$
In particular, $0\in\partial H(p)$ if and only if $H(p)=0$,
and $0\in\partial L(q)$ if and only if $L(q)=0$.
  \item[(ii)] If $p_1,p_2\in \partial L\left(q\right)$ for some $q\in\rn$, then
  $$\mbox{$\lz p_1+\left(1-\lz\right)p_2\in \partial L\left(q\right)$  and $H(\lz p_1+\left(1-\lz\right)p_2)=\lz H(p_1)+(1-\lz)H(p_2)$ for all $\lz\in[0,1]$.}$$
  \item[(iii)] For $q_1,q_2\in\rn$, if there exists $\lambda\in (0,1)$ such that
  $$\mbox{$L\left(\lz q_1+\left(1-\lz\right)q_2\right)=\lz L\left(q_1\right)+\left(1-\lz\right) L\left(q_2\right)$}$$
  and $p \in \partial L\left(\lz q_1+\left(1-\lz\right)q_2\right)$, then
  $p\in \partial L\left(q_1\right)\cap \partial L\left(q_2\right)$.
   \item[(iv)] The set $\partial L\left(q\right)$ is bounded locally uniformly in $q\in\rn$.  If
 $p_i\in \partial L\left(q_i\right)$ for all $i\in\nn$ and $q_i\to q_0$ as $i\to\fz$,
 then there exists $p_0\in \partial L\left(q_0\right)$ such that,
 after passing to a subsequence,  $p_i\to p_0$ as $i\to\fz$.
In particular, $\partial L(q)$ is a closed subset of $\rn$ for any $q\in\rn$.
    \end{enumerate}
\end{lem}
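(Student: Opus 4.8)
The plan is to prove Lemma \ref{LEM3.1} as four essentially routine facts from convex analysis, using only that $H, L$ satisfy \eqref{assh0} and that $H = L^*$, $L = H^*$ (biconjugation holds since both are proper, convex, lower semicontinuous, in fact finite and continuous). I would take the four items in the given order, since (i) is the backbone for everything else.

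For (i), I would start from the Young--Fenchel inequality $H(p) + L(q) \ge \langle p, q\rangle$ for all $p,q$, which is immediate from the definition $L(q) = \sup_p(\langle p,q\rangle - H(p))$. The subdifferential characterization $q \in \partial H(p) \iff H(p) + L(q) = \langle p,q\rangle$ is the standard Fenchel equality: if $q \in \partial H(p)$ then $H(p') \ge H(p) + \langle q, p' - p\rangle$ for all $p'$, so $\langle p', q\rangle - H(p') \le \langle p, q\rangle - H(p)$, whence $L(q) = \langle p,q\rangle - H(p)$; conversely equality forces $q \in \partial H(p)$ by reversing the computation. Symmetry in $H \leftrightarrow L$ (using $H = L^*$) gives the equivalence with $p \in \partial L(q)$. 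The ``in particular'' clauses follow by plugging in: $0 \in \partial H(p) \iff H(p) + L(0) = 0$, and $L(0) = \sup_p(-H(p)) = -\min H = 0$ by \eqref{assh0}, so this says $H(p) = 0$; similarly for $L$ using $H(0) = 0$.

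For (ii), if $p_1, p_2 \in \partial L(q)$ then by (i) $H(p_i) + L(q) = \langle p_i, q\rangle$, so for $p_\lambda := \lambda p_1 + (1-\lambda) p_2$ we get $\langle p_\lambda, q\rangle = \lambda H(p_1) + (1-\lambda) H(p_2) + L(q) \ge H(p_\lambda) + L(q)$ by convexity of $H$, while Young--Fenchel gives the reverse inequality; hence both inequalities are equalities, which simultaneously yields $p_\lambda \in \partial L(q)$ and (from the equality in the convexity bound) $H(p_\lambda) = \lambda H(p_1) + (1-\lambda) H(p_2)$. Item (iii) is the dual statement applied to $L$: if $L$ is affine on $[q_1, q_2]$ with the stated equality at $q_\lambda = \lambda q_1 + (1-\lambda)q_2$, and $p \in \partial L(q_\lambda)$, then for $i = 1,2$ one writes $L(q_i) \ge L(q_\lambda) + \langle p, q_i - q_\lambda\rangle$ (subdifferential inequality), and one checks that summing the two with weights $\lambda, 1-\lambda$ and using the affine equality forces each to be an equality, i.e. $p \in \partial L(q_i)$; the short computation is that $\lambda[L(q_1) - L(q_\lambda) - \langle p, q_1 - q_\lambda\rangle] + (1-\lambda)[L(q_2) - L(q_\lambda) - \langle p, q_2 - q_\lambda\rangle] = \lambda L(q_1) + (1-\lambda) L(q_2) - L(q_\lambda) = 0$, and both bracketed terms are $\ge 0$.

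For (iv), local boundedness of $\partial L$ is a general fact for convex functions finite on $\rn$: on a ball $\overline{B(0,\rho)}$, pick $\rho' > \rho$; if $p \in \partial L(q)$ with $|q| \le \rho$ then $L(q + e) \ge L(q) + \langle p, e\rangle$ for all $|e| \le 1$, so $|p| \le \sup_{\overline{B(0,\rho')}} L - \inf_{\overline{B(0,\rho)}} L$, finite by continuity of $L$. The closure/convergence statement then follows: given $p_i \in \partial L(q_i)$, $q_i \to q_0$, the $p_i$ lie in a fixed bounded set, so a subsequence converges to some $p_0$; passing to the limit in $L(q') \ge L(q_i) + \langle p_i, q' - q_i\rangle$ (continuity of $L$) gives $p_0 \in \partial L(q_0)$. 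Closedness of $\partial L(q)$ for fixed $q$ is the case $q_i \equiv q$. None of these steps is a genuine obstacle — this lemma is a compendium of standard convex-analytic facts; the only mild care needed is to invoke \eqref{assh0} for the ``in particular'' normalizations in (i) and to make sure biconjugation $H = L^*$ is available, which it is since $H$ is convex, finite, and continuous, hence closed.
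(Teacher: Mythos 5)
Your proposal is correct and follows essentially the same route as the paper: every item is reduced to the Fenchel equality characterization in (i) together with convexity of $H$ and $L$, exactly as in the paper's proof. The only cosmetic differences are that in (iii) and (iv) you argue via the subgradient inequality rather than the Fenchel equality, and in (iv) you bound $|p|$ using local boundedness (continuity) of $L$ instead of the paper's appeal to the superlinearity of $H$; both are routine, equally valid variants.
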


\begin{proof} (i) Note that $q\in \partial H\left(p\right)$  if and only if
$$\langle p,q\rangle-H\left(p\right)\ge \langle q,p' \rangle-H\left(p'\right), \forall p'\in\rn.$$
While
$H\left(p\right)+L\left(q\right)=\langle p,q\rangle$ if and only if
$$ \langle p,q\rangle-H\left(p\right)\ge \langle q, \hat{p} \rangle-H\left(\hat{p}\right), \forall \hat{p}\in\rn.$$
Thus $q\in \partial H\left(p\right)$ if and only if $H\left(p\right)+L\left(q\right)=\langle p,q\rangle$.
Similarly, we can show that $p\in \partial L\left(q\right)$ if and only if $H\left(p\right)+L\left(q\right)=\langle p,q\rangle$.

\smallskip
\noindent (ii) If $p_1,p_2\in \partial L\left(q\right)$ for some $q\in\rn$,
then by using   (i) we have that for any $\lz\in [0,1]$,
\begin{align*}
L\left(q\right)+ H \left(\lz p_1+ \left(1-\lz\right)p_2\right)&\ge \left(\lz p_1+ \left(1-\lz\right)p_2\right)\cdot q\\
&=\lz p_1\cdot q+\left(1-\lz\right)p_2\cdot q\\
&=\lz (H\left(p_1\right)+L(q))+\left(1-\lz\right) (H\left(p_2\right)+L\left(q\right)),\\
&=\lz H\left(p_1\right)+\left(1-\lz\right) H\left(p_2\right)+L\left(q\right)
\end{align*}
so that
$$H \left(\lz p_1+ \left(1-\lz\right)p_2\right)\ge \lz H\left(p_1\right)+\left(1-\lz\right) H\left(p_2\right), \forall \lz\in[0,1].$$
This, combined with the convexity of  $H$, implies
 $$H \left(\lz p_1+ \left(1-\lz\right)p_2\right)= \lz H\left(p_1\right)+\left(1-\lz\right) H\left(p_2\right) \forall \lz\in[0,1].$$
Hence $\left(\lz p_1+ \left(1-\lz\right)p_2\right)\in\partial L\left(q\right)$.

\medskip
\noindent (iii) If $L\left(\lz q_1+\left(1-\lz\right)q_2\right)=\lz L\left(q_1\right)+\left(1-\lz\right) L\left(q_2\right)$ and
 $p \in \partial L\left(\lz q_1+\left(1-\lz\right)q_2\right)$ for some $\lz\in\left(0,1\right)$, then by (i)  we get
 \begin{align*}
  L\left(\lz q_1+\left(1-\lz\right)q_2\right)
&= \left(\lz q_1+ \left(1-\lz\right)q_2\right)\cdot p-  H \left(p\right)\\
&=\lz (q_1\cdot p-  H \left(p\right))+\left(1-\lz\right)(q_2\cdot p-  H \left(p\right))\\
&\le \lz L(q_1)+(1-\lz)L(q_2).
\end{align*}
Hence we have that $q_i\cdot p=H\left(p\right)+L\left(q_i\right) $ for $i=1,2$,
which implies that $p\in\partial L(q_1)\cap\partial L(q_2)$.

\medskip
\noindent (iv) For $R\ge 1$, if $|q|\le R$ and $p\in \partial L\left(q\right)$,
then we have
$$H\left(p\right)=p\cdot q- L\left(q\right)\le |q||p|-L\left(q\right)
\le C_1\left(R\right)+ R|p|.$$
This, combined with the superlinear growth of $H$,
implies that there exists $C_2(R)>0$
such that $|p|\le C_2\left(R\right)$. From this, we see that
if $p_i\in \partial L\left(q_i\right)$ for $i\in\nn$ and $q_i\to q_0$ as $i\to\fz$,
then $p_i$ is bounded, and
$$p_i \cdot q_i=  H\left(p_i\right)+L\left(q_i\right), \ i\in\nn.$$
Hence, up to a subsequence, there exists $p_0\in\rn$
such that $p_i$ converges to $p_0$ as $i\to\fz$.
By the continuity of $H$ and $L$,  we then have
$$p_0 \cdot q_0=  H\left(p_0\right)+L\left(q_0\right).$$
By (i), this implies   $p_0\in\partial L\left(q_0\right)$.
The proof of Lemma \ref{LEM3.1} is now complete.
\end{proof}

As a consequence of Lemma \ref{LEM3.1}, we have
 \begin{cor}\label{propH} For $n\ge 2$, assume
 $H$ satisfies \eqref{assh0} and ({\bf A}) of Theorem \ref{lla}.
 Then $0\in \partial H(0) $,
 $H(p)>0$ and  $0\notin \partial H(p)$ for all $p\in \rr^n\backslash\{0\}$.
  \end{cor}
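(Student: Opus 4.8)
The strategy is to extract all three assertions directly from Lemma \ref{LEM3.1}(i) and the defining property \eqref{assh0}, with condition ({\bf A}) of Theorem \ref{lla} entering only to upgrade "$H(p)\ge 0$" to "$H(p)>0$ for $p\ne 0$". First I would note that by \eqref{assh0} we have $H(0)=\min_{\rn}H=0$, so $0$ is a global minimizer of $H$, which by definition of the subdifferential means $0\in\partial H(0)$; equivalently, by Lemma \ref{LEM3.1}(i), $0\in\partial H(p)\iff H(p)+L(0)=\langle p,0\rangle=0$, and since $L$ also satisfies \eqref{assh0} we have $L(0)=0$, so $0\in\partial H(p)\iff H(p)=0$. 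Thus the claims "$0\in\partial H(0)$" and "$0\notin\partial H(p)$ for $p\ne 0$" are both equivalent to the single claim "$H(p)>0$ for all $p\in\rn\setminus\{0\}$", which is therefore the only thing that needs proof.

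To prove $H(p)>0$ for $p\ne 0$, I would argue by contradiction: suppose $H(p_*)=0$ for some $p_*\ne 0$. Since $H\ge 0$ everywhere and $H$ is convex, the zero set $Z:=H^{-1}(0)=\{p:H(p)=0\}$ is a convex set containing both $0$ and $p_*$; hence the entire line segment $[0,p_*]$ lies in $Z$, i.e. $H\equiv 0$ on $[0,p_*]$. But $[0,p_*]$ is a nondegenerate line segment contained in the level set $H^{-1}(0)$, contradicting condition ({\bf A}). Therefore no such $p_*$ exists, which gives $H(p)>0$ for all $p\ne 0$.

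Combining the two paragraphs: from $H(p)>0$ for $p\ne0$ and the equivalence $0\in\partial H(p)\iff H(p)=0$ we get $0\notin\partial H(p)$ for $p\ne 0$; and since $H(0)=0$ we get $0\in\partial H(0)$. This completes the proof. There is essentially no obstacle here — the only point requiring a moment's care is invoking convexity of $H$ together with $H\ge 0$ to see that a zero of $H$ away from the origin forces an entire segment of zeros, and then reading off the contradiction with ({\bf A}); everything else is a bookkeeping application of Lemma \ref{LEM3.1}(i) and the normalization \eqref{assh0} (in particular that $L(0)=0$, which follows because $L$ inherits \eqref{assh0}).
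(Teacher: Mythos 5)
Your proposal is correct and follows essentially the same route as the paper: the positivity $H(p)>0$ for $p\ne 0$ is obtained from convexity of $H$, $H(0)=0$, and condition ({\bf A}) exactly as in the paper's proof, and the subdifferential statements reduce to the equivalence $0\in\partial H(p)\iff H(p)=0$ already recorded in Lemma \ref{LEM3.1}(i) (which you re-derive via $L(0)=0$ rather than citing, a negligible difference).
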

\begin{proof} If $H(p_0)=0$ for some $p_0\not=0$, then
 by convexity of $H$ and $H(0)=0$, we have that $H(p)=0$ for
 all $p\in [0,p_0]$, which contradicts to ({\bf A}) of Theorem \ref{lla}.
 Thus $H(p)>0$ and $0\notin \partial H(p)$ whenever $p\ne0$.
Moreover, by Lemma \ref{LEM3.1},
 $0\in \partial H(p)$ if and only if $H(p)=0$ if and only if  $p=0$.
\end{proof}

Lemma \ref{LEMconst} provides some geometric and analytic properties
of $H$,  if $H$ is constant in some line segment.

\begin{lem}\label{LEMconst} For $n\ge 2$, assume
$H$ satisfies \eqref{assh0}. Given a pair of points $a, b\in\rn$
with $a\not=b$, the following statements are equivalent:
   \begin{enumerate}
   \item[(i)]  $H$ is constant in the line segment $[a,b]$.

   \item[(ii)] $b-a\perp  \partial H\big(\frac{a+b}{2}\big)$ and
   \begin{equation}\label{Hcon} \partial H\big(\frac{a+b}{2}\big)
   =\partial H(\lz a+(1-\lz)b)\subset\partial H(a)\cap \partial H(b),
   \quad \forall \lz\in(0,1).
    \end{equation}

    \item[(iii)] There exists a $q\in\rn$ such that $b-a\perp q$ and $[a,b]\subset\partial L(q)$.

    \item[(iv)] There exists a $q\in\rn$ such that $H(a)=H(b)$ and $[a,b]\subset\partial L(q)$.

    \end{enumerate}
     \end{lem}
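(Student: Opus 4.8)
The plan is to prove the four statements equivalent through the cyclic chain (i) $\Rightarrow$ (ii) $\Rightarrow$ (iii) $\Rightarrow$ (iv) $\Rightarrow$ (i), letting the Legendre duality of Lemma \ref{LEM3.1}(i) — that $q\in\partial H(p)$, $H(p)+L(q)=\langle p,q\rangle$, and $p\in\partial L(q)$ are all equivalent — handle every translation between the $H$-side conditions and the $L$-side conditions. Throughout I will use the standard fact that a finite convex function on $\rn$ has nonempty subdifferential at every point.

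For (i) $\Rightarrow$ (ii), write $m=\frac{a+b}2$ and $p_\lambda=\lambda a+(1-\lambda)b$, and suppose $H\equiv c$ on $[a,b]$. The first step is to show every $q\in\partial H(p_\lambda)$ with $\lambda\in(0,1)$ is orthogonal to $b-a$: plugging $p'=a$ and $p'=b$ into the subgradient inequality $H(p')\ge H(p_\lambda)+\langle q,p'-p_\lambda\rangle$ and using $H(a)=H(b)=H(p_\lambda)=c$ forces $\langle q,b-a\rangle\le0$ and $\langle q,a-b\rangle\le0$ simultaneously, hence $q\perp(b-a)$; in particular $b-a\perp\partial H(m)$. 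Once this orthogonality is known, for any $\lambda,\mu\in(0,1)$ one has $\langle q,p_\mu-p_\lambda\rangle=\langle q,a-p_\lambda\rangle=\langle q,b-p_\lambda\rangle=0$ while all of $H(p_\lambda),H(p_\mu),H(a),H(b)$ equal $c$, so the subgradient inequality based at $p_\lambda$ rebases verbatim at $p_\mu$, at $a$, and at $b$; this yields $\partial H(p_\lambda)=\partial H(p_\mu)$ and $\partial H(p_\lambda)\subset\partial H(a)\cap\partial H(b)$, which is exactly (ii).

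The remaining three implications are short applications of Lemma \ref{LEM3.1}. For (ii) $\Rightarrow$ (iii), I would pick any $q\in\partial H(m)$; by (ii), $b-a\perp q$ and $q\in\partial H(a)\cap\partial H(b)$, so Lemma \ref{LEM3.1}(i) gives $a,b\in\partial L(q)$, and since $\partial L(q)$ is convex, $[a,b]\subset\partial L(q)$. For (iii) $\Rightarrow$ (iv), from $[a,b]\subset\partial L(q)$ I get $q\in\partial H(a)\cap\partial H(b)$, and the two subgradient inequalities $H(b)\ge H(a)+\langle q,b-a\rangle$ and $H(a)\ge H(b)+\langle q,a-b\rangle$ together with $q\perp(b-a)$ force $H(a)=H(b)$. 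For (iv) $\Rightarrow$ (i), Lemma \ref{LEM3.1}(ii) applied with $p_1=a$, $p_2=b$ gives $H(p_\lambda)=\lambda H(a)+(1-\lambda)H(b)=H(a)$ for all $\lambda\in[0,1]$, i.e. $H$ is constant on $[a,b]$.

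I do not expect a genuine obstacle here; the only point requiring a little care is the convex-analytic bookkeeping in (i) $\Rightarrow$ (ii), namely verifying that a subgradient at a relative-interior point of the segment — once it is known to be orthogonal to the segment direction — is automatically a subgradient at every point of the segment, so that $\partial H$ is constant along the relative interior and is contained in the endpoint subdifferentials. All the other steps are immediate from the $H$–$L$ duality recorded in Lemma \ref{LEM3.1}.
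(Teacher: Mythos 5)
Your proposal is correct and follows essentially the same route as the paper: the same cycle (i)$\Rightarrow$(ii)$\Rightarrow$(iii)$\Rightarrow$(iv)$\Rightarrow$(i), with the orthogonality-plus-rebasing argument for (i)$\Rightarrow$(ii) and Lemma \ref{LEM3.1} duality handling the rest. The only (immaterial) differences are that you derive $H(a)=H(b)$ in (iii)$\Rightarrow$(iv) from the two subgradient inequalities rather than the duality equalities, and you cite Lemma \ref{LEM3.1}(ii) for (iv)$\Rightarrow$(i) instead of redoing that computation.
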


    \begin{proof}
(i)$\Rightarrow$(ii): Assume that $H$ is  constant in the line segment $[a,b]$. For any $\lz\in(0,1)$,  if $q_\lz\in \partial H(\lz a+(1-\lz)b)$, then
$$\begin{cases} 0=H(a)-H(\lz a+(1-\lz)b)\ge (1-\lz)q_\lz\cdot (a-b)\\
0=H(b)-H(\lz a+(1-\lz)b)\ge \lz q_\lz\cdot (b-a).
\end{cases}
$$
This implies $q_\lz\perp (b-a)$.  Thus for any $\mu\in[0,1]$
and any $p\in\rn$, we have
 \begin{align*} H(p)-H(\mu a+(1-\mu)b)&= H(p)-H(\lz a+(1-\lz)b)\\
 &\ge
  q_\lz\cdot (p- (\lz a+(1-\lz)b))\\
 &= q_\lz\cdot (p- (\mu a+(1-\mu)b))+
  (\mu-\lz)q_\lz\cdot(a-b)\\
  &= q_\lz\cdot (p- (\mu a+(1-\mu)b)),
  \end{align*}
  this implies that $q_\lz\in \partial H(\mu a+(1-\mu)b)$.
  In particular, \eqref{Hcon} holds.

\medskip
\noindent (ii)$\Rightarrow$(iii):
 Let $q\in   \partial H(\frac{a+b}2)$. Then by (ii)  and Lemma \ref{LEM3.1} (i),
 we have $[a,b]\in\partial L(q)$ and $a-b\perp q$.

\medskip
\noindent (iii)$\Rightarrow$(iv):
Let $q$ be given by (iii). Then $[a,b]\subset\partial L (q)$ and $q\cdot a=q\cdot b$.
By Lemma \ref{LEM3.1} (i), we have that
$$ H(b)+L(q)=q\cdot b=q\cdot a=H(a)+L(q),$$
which yields $H(a)=H(b)$.

\medskip
\noindent (iv)$\Rightarrow$(i):  Let $q$ be as in (iv).  Then $H(a)=H(b)$
 and $[a,b]\subset\partial L(q)$. By Lemma \ref{LEM3.1} (i),
 this implies that for any $ \lz\in[0,1]$,
        \begin{align*} H(\lz a+(1-\lz)b)+L(q) & =
  q \cdot (\lz a+(1-\lz)b) \\
  &= \lz q\cdot a+(1-\lz)q\cdot b\\
  &=\lz H(a)+(1-\lz)H(b)+L(q)\\
  &=H(a)+L(q),
  \end{align*}
  which implies that $H$ is constant in $[a,b]$.
    \end{proof}

\medskip
 Now we are ready to prove Propositions \ref{LEMC3}, 3.2, 3.3, and \ref{LEM4.5}.

\begin{proof}[Proof of Proposition \ref{LEMC3}] {\it Proof of (i)}:\\
 (i-b)$\Rightarrow$(i-a): Suppose (i-a) were false. Then
 $H$ is constant in some line segment $[a,b]$. By Lemma \ref{LEMconst} (iv),
 there exists $q\in\rn$ such that
 $[a,b]\subset\partial L(q)$. On the other hand,
 since $H $ is constant in $[a,b]$, it follows from (i-b)
 that $\partial L(q)$ is a singleton. We get the desired contradiction.

\medskip
\noindent (i-a)$\Rightarrow$(i-b): assume that $ H$ is not constant in any line segment. For any $q\in \rn $, assume that $\partial L(q)$ contains at
least two points $p_1, p_2 $, with $p_1\ne p_2$.
It suffices to show  $\partial L\left(q\right)$ is contained in the line
determined by $p_1$ and $p_2$.
Indeed, from Lemma \ref{LEM3.1} we know that $\partial L\left(q\right)$ is a bounded convex set. It is clear that any bounded convex  set contained in a line must be a line segment.

Let $p_0\in\partial L\left(q\right)$ be such that $p_0\ne p_1,p_2$.
Then $H(p_i)\ne H (p_j )$ for $0\le i<j\le 2$. For, otherwise,
$H(p_i)= H (p_j )$ for some $0\le i<j\le 2$.  By Lemma \ref{LEMconst},
we then have $H$ is a constant in $[p_i,p_j]$, which is impossible.
Without loss of generality, we may assume  that
$$H(p_0)<H(p_1)<H(p_2).$$
Then there exists $\lz\in (0,1)$ such that
$$H\big(\lz p_0+(1-\lz)p_2\big)=H\big(p_1\big).$$
Since  $ H$ is not  constant in any line segment,
by applying  Lemma \ref{LEMconst} again we  must have
that
$$ p_1= \lz p_0+ (1-\lz )p_2,$$
which implies that $p_1,p_2$ and $p_3$ lies in the same line, that is, $p_0$ must lie in the line determined by $p_1$ and $p_2$.

\medskip
\noindent{\it Proof of (ii)}. \\
(ii-b)$\Rightarrow$(ii-a): Suppose (ii-a) were false.
Then $H$ is constant in a $2$-simplex $\Delta$, which is the convex hull
of three non-planar points $p_1,p_2,p_3$.
Let $q\in\partial H\big(\frac{p_1+p_2+p_3}3\big)$.
Let $I\subset \Delta$ be any line segment passing through $\frac{p_1+p_2+p_3}{3}$.  Since $H$ is  constant in $I$, it follows from Lemma \ref{LEMconst}
that $I\subset\partial L(q)$. Hence we see that
$\Delta\subset \partial L(q)$ so that  $\partial L(q)$ satisfies neither
(ii-b-1) or nor (ii-b-2).

Now we want to show that  $\partial L(q)$ does not satisfy
(ii-b-3).   For, otherwise,  $\partial L(q)$ is a bounded convex set
whose boundary consists four curves $\gz_i, 0\le i \le 3$, as in (ii-b-3).
Since $H$ is strictly increasing in $\gz_1$ and strictly decreasing in $\gz_3$,
for any $k\in \big(m:=\min_{p\in\partial L(q)} H(p), M:=\max_{p\in\partial L(q)}  H(p)\big)$,
there exist $a_k\in\gz_1$ and $b_k\in\gz_1$ such that
$H(a_k)=H(b_k)=k$. Write $[a_m,b_m]=\gz_0$ and $[a_M,b_M]=\gz_2$.
Thus by Lemma 3.5, we have that $H$ is constant $k$ in $[a_k,b_k]$
and $H^{-1}(k)\cap \partial L(q)=[a_k,b_k]$.  This contradicts
to the fact that $H|_{\Delta}=$constant.

\medskip
\noindent (ii-a)$\Rightarrow$(ii-b): Assume that $H$ is not constant
in any $2$-simplex. We claim that $\partial L(q)$ must be contained  in
a $2$-dimensional affine plane $P\subset\rn$.
For, otherwise, we can find $4$ distinct points $p_0,p_1,p_2, p_3$,
which are not contained in any $2$-dimensional affine plane such that
$\{p_0,p_1, p_2, p_3\}\subset \partial L(q)$. Without loss of generality,
we can assume
$$H(p_0)\le H(p_1)\le H(p_2)\le H(p_3).$$
 Since $H$ is not constant in any $2$-simplex, we must have either\\
 1) $H(p_0)\le H(p_1)< H(p_2)\le H(p_3)$; or\\
 2) $H(p_0)< H(p_1)\le H(p_2)< H(p_3)$.\\

 In  the case 1), we can find three points $p'_0 \in [p_0,p_2], p'_1\in [p_0,p_3],$ and $p'_2\in [p_1,p_2]$ such that
 $$H(p'_0)= H(p'_1)= H(p'_2)=  H\big(\frac{p_1+p_2}2\big),$$
Note that $p'_0,p'_1,p'_2$ are not contained in the same line,
and hence its convex hull is a $2$-simplex, denoted by $\Delta'$.
Moreover,  for any $\lz_i>0$ with  $\displaystyle\sum_{i=0}^2\lz_i=1$
we have that $\displaystyle \sum_{i=0}^2\lz_ip_i' \in \partial L(q)$ and hence
 $$H(\sum_{i=0}^2\lz_ip_i')=(\sum_{i=0}^2\lz_ip_i')\cdot q-L(q)=
  \sum_{i=0}^2\lz_i(p_i' \cdot q-L(q))=\sum_{i=0}^2\lz_iH(p'_i)=H\left(\frac{p_1+p_2}2\right). $$
This implies that $H$ is a constant in $\Delta'$, which contradicts to (ii-a).\\

In the case 2), we can find three points
 $p'_0 \in [p_0,p_1], p'_1\in [p_0,p_2]$ and $p'_2\in [p_0,p_3]$
 such that
 $$H(p'_0)= H(p'_1)= H(p'_2)=  H\left(\frac{p_0+p_1}2\right).$$
Then, similar to the case 1), we can show
that $H$ is  constant in the convex hull of $p'_0p'_1p'_2$,
which is also impossible.

Assume that $\partial L(q)$ is neither a single point nor a line segment.
Then $\partial L(q)$ is a bounded, closed convex domain $U\subset P$.
Thus $\partial L(q)$ is bounded by a simple closed  curve $\gz$.
Note that $H$ achieves its minimum and maximum over $\partial L$
only at the boundary $\partial U=\gz$.
Denote by $\gz_0$ (resp. $\gz_2$) the subset of $\gz$
on which $H$ achieves its minimum (resp. maximum) in $U$.
 Since $H$ is not a constant in any $2$-simplex,
 $\gz_0$ and   $\gz_2$ must be a single point or a line segment. Denote the other two connected components of $\gz\setminus(\gz_0\cup\gz_2)$ by
 $\gz_1$  and $\gz_3$. We may assume that
 the ending point of $\gz_i$ is the starting point of $\gz_{i+1}$ for $i=0,1,2,3$ (where $\gz_4=\gz_0$). Now we want to show that $H$ is strictly increasing along $\gz_1$. For, otherwise, there exists two distinct points
    $p_0,p_1\in\gz_1$ so that $H(p_0)=H(p_1)$. Observe that there exists $p_2\in\gz_3$ with $H(p_2)=H(p_1)$.
Hence $H$ is a constant in the convex hull of  $\{p_0,p_1,p_2\}$, which is a
$2$-simplex, which is impossible.  Similarly, we can show that $H$ is strictly decreasing along $\gz_3$. The proof of Proposition \ref{LEMC3} is now complete.
\end{proof}

\begin{proof}[Proof of Proposition \ref{LEMstrcon}]
(i)$\Rightarrow$(ii): Suppose that  there exists $q\in\rn$
such that $ \partial L(q)$ contains at least two points $p_1, p_2$.
Then  Lemma \ref{LEM3.1}  implies that $H$ is linear in $[p_1,p_2]$,
which contradicts to the strictly convexity of $H$.
Thus for any $q\in\rn$, $\partial L(q)$ must be  a single point.

\medskip
\noindent (ii)$\Rightarrow$(iii):   By Lemma \ref{LEM3.1} (iv),
it suffices to show that $L$ is differentiable everywhere in $\rn$.
We prove by contradiction. Suppose $L$ is not differentiable at $q_0\in\rn$.
By (ii), we have $\partial L(q_0)=\{p_0\}$ is a singleton.
There exist $\ez_0>0$ and a sequence $\{q_i\}\subset\rn\rightarrow q_0$ such that
$$|L(q_i)-L(q_0)-  p_0\cdot (q_i-q_0)|\ge \ez_0|q_i-q_0|.$$
Write  $ \partial L(q_i )=\{p_i \}$.  Then we have
$$ p_0\cdot (q_i-q_0) \le L(q_i)-L(q_0)\le   p_i\cdot (q_i-q_0)=p_0\cdot (q_i-q_0)+ (p_i-p_0)\cdot (q_i-q_0),  $$
so that
$$|L(q_i)-L(q_0)-   p_0\cdot (q_i-q_0)|\le  |p_i-p_0||q_i-q_0|.$$
Thus we obtain that for $i$ sufficiently large,
$$|p_i-p_0|\ge \ez_0.$$
This contradicts to Lemma \ref{LEM3.1} (iv).

\medskip
\noindent (iii)$\Rightarrow$(i):  Assume that $L\in C^1(\rn)$ and
$\partial L(q)=\{DL(q)\}$ for any $q\in\rn$.
Suppose that $H$ is not strictly convex. Then
there would exist $p_1\ne p_2$ in $\rn$ and  $\lz\in\left(0,1\right)$ such that
$$H\big(\lz p_1+(1-\lz)p_2\big)=\lz H(p_1)+(1-\lz) H(p_2).$$
Let $q \in \partial H\left(\lz p_1+\left(1-\lz\right)p_2\right)$. Then by Lemma \ref{LEM3.1} (iii) we have that
$q\in \partial H\left(p_1\right)\cap \partial H\left(p_2\right)$.
Hence by Lemma \ref{LEM3.1}, $p_1,p_2\in\partial L(q)=\{DL(q)\}$,
which is impossible.
\end{proof}

\begin{proof}[Proof of Proposition \ref{e-e6}.]

(ii) $\Longrightarrow$ (i): Suppose (i) were false. Then
$H$ is  constant in a line segment $[a,b]$.
Let  $p=\frac{a+b}{2}$, $q\in\partial H(p)$, and $v=\frac{b-a}2$.
Then $H(p)=H(p+v)$.
By Lemma \ref{LEMconst}, $q\perp v$.
This contradicts to (ii).

\medskip
\noindent (i)$\Rightarrow$ (ii):
Suppose that (ii) were false. There exists $\ez_0>0$
such that for any $i\in\nn$, there exists $v_i\in  \overline{B\big(0,\frac{1}{\ez_0}\big)}$, with $|v_i|\ge \ez_0$,  satisfying
$$H(p_i+v_i)-H(p_i)\le \frac{1}{i},
\ {\rm{and}}\ \big|\measuredangle \left(q_i,v_i\right)  - \frac{\pi}2\big|\le\frac{1}{i},$$
for some $p_i \in \overline{B\big(0,\frac{1}{\ez_0}\big)}$,
$q_i\in\partial H(p_i')$ and $p'_i\in \overline{B(p_i,\frac{1}{i})}$.
It is easy to see that as $i\to\fz$, $p_i\to p_0$
and $p_i'\to p_0$. After passing to a subsequence,
we may assume that there exist $v_0\in\rn$, with $|v_0|\ge\ez_0$,
and $q_0\in\partial H(p_0)$ such that $v_i\to v_0$ and
$q_i\to q_0$ as $i\to\fz$. It is easy to see that
$$H\left(p_0+v_0\right)-H\left(p_0\right)\le 0,\quad \, \measuredangle \left(q_0,v_0\right)  = \frac{\pi}2.$$
This and the convexity of $H$ imply that
$$H\left(p_0+tv_0\right)\le (1-t)H\left(p_0\right)+tH(p_0+v_0)\le H\left(p_0\right)\quad\forall t\in[0,1].$$
On the other hand,  from $q_0\in\partial H(p_0)$ and $ \measuredangle \left(q_0,v_0\right)  = \frac{\pi} 2$, we have
$$ H\left(p_0+tv_0\right)-H\left(p_0\right)\ge q_0\cdot tv_0=0,\quad\forall t\in[0,1].$$
Hence $H\left(p_0+tv_0\right)=H\left(p_0\right)$ for all  $t\in[0,1]$,
This contradicts to (i).  The proof of Proposition \ref{e-e6} is now complete.
\end{proof}

\begin{proof}[Proof of Proposition \ref{LEM4.5}.]
(ii)$\Rightarrow$(i): Assume that  (i) were false.
Then $H$ is constant in a line segment $[a,b]\subset\rn$.
Let  $p=\frac{a+b}2$, $e=b$ and $q\in\partial H(p)$.
Then $H(p)=H(e)$. By Lemma \ref{LEMconst},
$q\perp (p-e)$. Thus  for any $R>1$,
$$\phi_R\big(\frac{|b-a|}2\big)=0.$$
This contradicts to (ii).

\medskip
\noindent (i)$\Rightarrow$(ii): Suppose that (ii) were false.
 Then there exist  $R_0>0$ and $\eta_0>0$  such that $\phi_{R_0}(\eta_0)=0$,
 that is, we can find $p_i$ and $e_i$, with $|p_i-e_i|\ge \eta_0$ and $ H(e_i)=H(p_i)\le R_0$, and $q_i\in\partial H(p_i)$ such that
 $$(p_i-e_i)\cdot \frac{q_i}{|q|}\le \frac{1}{i}.$$
Since $\{p_i\}, \{e_i\}, \{q_i\}$ are  bounded,
we may assume that there exist $p,e,q\in\rn$
such that after passing to a subsequence,
$$p_i\to p,\ e_i\to e, \ {\rm{and}}\  q_i\to q, \ {\rm{as}}\ i\to\fz.$$
It is readily seen that
$$q\in\partial H(p),\ H(p)=H(e)\le R_0, \ |p-e|\ge\eta_0,
\ {\rm{and}}\ (p -e )\cdot \frac{q }{|q|}\le 0.$$
On the other hand, it follows from
 $q\in\partial H(p)$ that
 $$(e-p  )\cdot  q \le    H(e)-H(p)=0.$$
Hence we obtain
$$(e-p  )\cdot  q=0.$$
Applying $q\in\partial H(p)$ and Lemma \ref{LEM3.1}, we have that
$$H(e)+L(q)=H(p)+L(q)=p\cdot q=e\cdot q,$$
so that $e,p\in\partial L(q)$ and $[p,e]\in\partial L(q)$.
Since $H(p)=H(e)$, it follows from Lemma \ref{LEM3.1} (iii)
that $H$ is constant in $[p,e]$, which contradicts to (i).
\end{proof}

\subsection{Properties of cone functions}
In this subsection, we will establish some analytic and geometric properties
of the cone functions $C^H_k(\cdot)$. More precisely, we will prove
the following Lemma  \ref{la-mu}, and  Corollaries \ref{xew4} and \ref{com-linear}.
 \begin{lem} \label{la-mu} For $n\ge 2$,
 assume $H$ satisfies both \eqref{assh0}  and ({\bf A}) of Theorem \ref{lla}.
\begin{enumerate}
 \item[(i)] If  $q\in\partial H\left(p\right)$, then $ C_{H\left(p\right)}^H\left(q\right)=\langle p, q\rangle$.

 \item[(ii)] For $k>0$ and $z\ne 0$, let $p_0\in H^{-1}(k)$ be such that
$C_k^H\left(z\right)=\langle p_0 ,z\rangle $,
then there exists $t_0>0$ such that
$ t_0z\in \partial H\left(p_0\right)$.
\end{enumerate}
 \end{lem}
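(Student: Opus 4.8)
The plan is to treat the two parts separately, reducing both to the conjugacy dictionary of Lemma~\ref{LEM3.1} and Corollary~\ref{propH}. Part~(i) is immediate from the subgradient inequality: since $\{p':H(p')\le H(p)\}$ is bounded by superlinearity of $H$, the supremum defining $C^H_{H(p)}(q)$ is finite; taking $p'=p$ gives $C^H_{H(p)}(q)\ge\langle p,q\rangle$, while for any $p'$ with $H(p')\le H(p)$ the inequality $H(p')-H(p)\ge\langle q,p'-p\rangle$ forces $\langle q,p'\rangle\le\langle q,p\rangle+\big(H(p')-H(p)\big)\le\langle q,p\rangle$, so $C^H_{H(p)}(q)\le\langle p,q\rangle$ as well.

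For part~(ii), set $K:=\{p\in\rn:H(p)\le k\}$ and $C:=C^H_k(z)=\langle p_0,z\rangle$. Since $H(0)=0<k$ we have $0\in\mathrm{int}\,K$, hence $C=\max_{p\in K}\langle z,p\rangle>0$ (as $z\ne0$) and $p_0\ne0$. I would introduce the one-variable convex function
\[
\theta(t):=L(tz)-tC+k,\qquad t\ge0,
\]
and record four elementary facts: (a)~$\theta(0)=k>0$; (b)~$\theta\ge0$ everywhere, since $L(tz)=\sup_p\big(\langle tz,p\rangle-H(p)\big)\ge\langle tz,p_0\rangle-H(p_0)=tC-k$; (c)~$\theta$ is coercive, because superlinearity of $L$ gives $L(tz)/t\to\fz$; and (d)~$\theta'(0^+)=\max_{p\in\partial L(0)}\langle z,p\rangle-C=-C<0$, because $\partial L(0)=\{0\}$ by Lemma~\ref{LEM3.1}(i) and Corollary~\ref{propH}. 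From (a), (c), (d) the function $\theta$ attains its minimum at some interior $t_0\in(0,\fz)$, so $0\in\partial\theta(t_0)$.

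The crux is to show $\theta(t_0)=0$. Since the subdifferential at $t_0$ of $t\mapsto L(tz)$ is the compact interval $\{\langle z,p\rangle:p\in\partial L(t_0z)\}$, the relation $0\in\partial\theta(t_0)$ yields some $\wz p\in\partial L(t_0z)$ with $\langle z,\wz p\rangle=C$. By Lemma~\ref{LEM3.1}(i) we then have $t_0z\in\partial H(\wz p)$ and $L(t_0z)=\langle t_0z,\wz p\rangle-H(\wz p)=t_0C-H(\wz p)$, whence $\theta(t_0)=k-H(\wz p)$. If $\theta(t_0)>0$ then $H(\wz p)<k$, so $\wz p$ lies in the open set $\{H<k\}\subset K$; consequently $\wz p+sz\in K$ for all small $s>0$ while $\langle z,\wz p+sz\rangle=C+s|z|^2>C$, contradicting the maximality of $C$ over $K$. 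Together with (b) this forces $\theta(t_0)=0$, i.e.\ $L(t_0z)=t_0C-k=\langle t_0z,p_0\rangle-H(p_0)$, which by Lemma~\ref{LEM3.1}(i) is precisely $t_0z\in\partial H(p_0)$, with $t_0>0$.

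The main obstacle is exactly this last step, $\min\theta=0$: it is equivalent to the familiar fact that the normal cone of the sublevel set $\{H\le k\}$ at the boundary point $p_0$ equals $\mathbb R_{\ge0}\,\partial H(p_0)$, a constraint-qualification statement resting on the Slater-type condition $H(0)<k$. Carrying the argument through the auxiliary function $\theta$ keeps everything self-contained and in step with Lemma~\ref{LEM3.1}; the one place needing care is the computation of $\theta'(0^+)$ and of $\partial\theta(t_0)$ in terms of the subdifferential of $L$.
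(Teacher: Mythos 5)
Your proof is correct, and for part (ii) it takes a genuinely different route from the paper. Part (i) is essentially the paper's argument (the paper passes through $H(p)+L(q)=\langle p,q\rangle$ and $L(q)\ge\langle p',q\rangle-H(p')$, you use the subgradient inequality directly; these are the same computation). For part (ii), the paper invokes Proposition \ref{LEMC3} to know that each $\partial L(tz)$ is a point or a segment, proves the surjectivity claim $\bigcup_{t\ge0}H(\partial L(tz))=[0,\infty)$ by an intermediate-value argument on $t\mapsto H(\partial L(tz))$ (using the closure property in Lemma \ref{LEM3.1}(iv)), and then combines part (i) with the homogeneity of $C^H_k$ to land on $p_0\cdot t_0z=H(p_0)+L(t_0z)$. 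You instead minimize the one-variable convex function $\theta(t)=L(tz)-tC^H_k(z)+k$: coercivity of $L$, $\theta(0)=k>0$, and $\theta'(0^+)=-C^H_k(z)<0$ (where $\partial L(0)=\{0\}$ comes from Corollary \ref{propH}, i.e.\ from condition ({\bf A})) force an interior minimizer $t_0>0$, and the Slater-type argument (if $H(\wz p)<k$ then $\wz p+sz$ still lies in the sublevel set and beats the maximality of $\langle z,\cdot\rangle$ there) pins the minimum value at $0$, which via Lemma \ref{LEM3.1}(i) is exactly $t_0z\in\partial H(p_0)$. Your route bypasses Proposition \ref{LEMC3} and the claim \eqref{union} entirely, at the cost of importing two standard one-dimensional convex-analysis facts that the paper does not state (the directional derivative formula $L'(q;z)=\max_{p\in\partial L(q)}\langle p,z\rangle$, equivalently the identification $\partial\big(t\mapsto L(tz)\big)(t_0)=\{\langle z,p\rangle:p\in\partial L(t_0z)\}$); these are textbook facts, and if one wished to stay strictly within the paper's toolkit, the step $\theta'(0^+)<0$ can also be obtained from Lemma \ref{LEM3.1}(iv) alone, since $p_t\in\partial L(tz)$ must tend to $0$ as $t\to0$, giving $L(tz)\le t|z||p_t|=o(t)$. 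The paper's proof, by contrast, yields as a by-product the slightly stronger structural picture of how $H$ sweeps out values along $\{\partial L(tz)\}_{t\ge0}$, but for the statement of the lemma your argument is a clean and more elementary alternative.
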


 \begin{proof}
 (i) If $q\in\partial H\left(p\right)$, then
 $$\langle p, q\rangle =H\left(p\right)+L\left(q\right)=H\left(p\right)+\sup_{p'\in\rn }\big(\langle p', q \rangle- H\left(p'\right) \big)
 \ge  \sup_{H(p')\le H(p)}\langle p', q \rangle
 =C_{H(p)}^H\left(q\right),$$
which,  together with $\langle p, q\rangle\le C_{H(p)}^H\left(q\right)$,
yields $ C_{H(p)}^H\left(q\right)=\langle p, q\rangle$.

\medskip
\noindent (ii)  By Proposition \ref{LEMC3},
  $\partial L(tz)$ is either a single point or a line segment for each $t\ge0$.
   Write $I_t=[a_t,b_t]=H(\partial L(tz))$ for each $t\ge0$.  We claim that
   \begin{equation}\label{union} \displaystyle\cup_{t\ge0}I_t=[0,\fz).
   \end{equation}
 Assume \eqref{union} for the moment. Since $H(p_0)>0$, there
 exist $t_0>0$ and $p_{t_0z}\in\partial L(t_0z)$ such that $H(p_0)=H(p_{t_0z})=k$.  By (i), we have
$$C_{H(p_{t_0z})}^H(t_0z)= p_{t_0z}\cdot t_0z= H(p_{t_0z})+ L(t_0z)=H(p_0)+ L(t_0z).$$
On the other hand, we have that
$$C_{H(p_{t_0z})}^H(t_0z)=t_0C_{H(p_{t_0z})}^H(z)
=t_0C_{H(p_0)}^H(z)=t_0 \langle p_0, z\rangle=\langle p_0, t_0z\rangle.$$
Therefore we obtain
$$p_0\cdot  t_0z=H(p_0)+ L(t_0z),$$
which, together with Lemma \ref{LEM3.1}, implies $t_0z\in\partial H(p_0)$.

Now we return to prove \eqref{union}.
Observe that for any $k>0$, there exist $0<t_1<t_2<\infty$ such that
 $b_s<k<a_t$ for all $t\ge t_2$ and $s<t_1$. To see this,
 let $p_t \in \partial L(tz)$  for $t>0$. Then by Lemma \ref{LEM3.1} (i),
 we have that
 $$ H(p_{t })+  L(t z)= p_t\cdot  t z,$$
 which implies
 $$|p_t|\ge \frac1{t|z|}L(t z)\ge M(tz) \to\fz, \ {\rm{as}}\ t\to\fz.$$
 On the other hand, by Lemma \ref{LEM3.1} (iv), we have that
 $$C=\sup_{0\le t\le 1}\sup_{p\in\partial H(tz)}|p|<\fz,
 \ {\rm{and}}\ H(p_{t })\le Ct|z|\to0,  \ {\rm{as}}\  t\to0.$$
 Define $t(z)=\sup\{t>0: \ b_t< k\big\}.$
 Then  we claim that
 \begin{equation}\label{interval}
 a_{t(z)}\le k\le  b_{t(z)}, \ {\mbox{or equivalently}}\ k\in I_{t(z)}.
 \end{equation}
 In fact, the definition of $t(z)$ implies that for any $\ez>0$,
 $b_{t(z)+\ez}\ge k$. Let  $p_{t(z)+\ez}\in\partial L((t(z)+\ez)z)$
 be such that $H(p_{t(z)+\ez})= b_{t(z)+\ez}$.
Applying Lemma \eqref{LEM3.1} (iv), there exists $p_*\in\partial L(t(z)z)$
such that $p_{t(z)+\ez}\to p_*$ as $\ez\to 0$ and hence
$H(p_*)=\lim_{\ez\rightarrow 0} b_{t(z)+ez}\ge k$.
Then $b_{t(z) }\ge H(p_*)\ge k$. On the other hand,
since  $a_{t(z)-\ez}\le b_{t(z)-\ez}\le k$, a similar argument
shows $a_{t(z) }\le k$.
This completes the proof of Lemma \ref{la-mu}.
\end{proof}

\begin{cor}\label{xew4} For $n\ge 2$, assume  $H$ satisfies both \eqref{assh0}    and ({\bf A}) of Theorem \ref{lla}. For any $R\ge 1$, there exists a constant $C_R>0$ such that
for any $\dz\in (0,1)$ and $0\le k\le R $,  it holds that
\begin{equation}\label{xew5}
C^H_{k}(x)+\dz|x|\le C^H_{k+C_R\dz}(x),\quad \forall x\in \rn.
\end{equation}
\end{cor}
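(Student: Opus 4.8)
The plan is to reduce the inequality to the unit sphere by positive homogeneity of cone functions, and then, given a maximizer of $C^H_k$, to produce an explicit competitor for $C^H_{k+C_R\dz}$ by a small perturbation whose effect on $H$ is controlled by the local Lipschitz continuity of the convex function $H$ on the compact sublevel set $\{H\le R\}$.

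In detail: since $C^H_k(\cdot)$, $C^H_{k+C_R\dz}(\cdot)$ and $|\cdot|$ are all positively homogeneous of degree one, it suffices to prove
\[
C^H_k(x)+\dz\le C^H_{k+C_R\dz}(x)\qquad\text{whenever }|x|=1,
\]
the case $x=0$ being trivial and the general case following by rescaling. Fix such an $x$ and $0\le k\le R$. By coercivity of $H$ the sublevel set $\{H\le k\}$ is compact, so the supremum defining $C^H_k(x)=\sup_{H(p)\le k}p\cdot x$ is attained at some $p_0$ with $H(p_0)\le k$ and $p_0\cdot x=C^H_k(x)$; moreover $p_0\in\{H\le R\}\subset\overline{B(0,M_R)}$ for a radius $M_R$ depending only on $H$ and $R$. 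Set $p_1:=p_0+\dz x$. Then $p_1\cdot x=C^H_k(x)+\dz$, and $|p_1|\le M_R+1$ since $\dz<1$ and $|x|=1$. As $H$ is convex it is Lipschitz on the compact ball $\overline{B(0,M_R+1)}$ with some constant $\Lambda_R$ depending only on $H$ and $R$, whence
\[
H(p_1)=H(p_0+\dz x)\le H(p_0)+\Lambda_R\dz\le k+\Lambda_R\dz.
\]
Taking $C_R:=\Lambda_R$, the definition of the cone function yields $C^H_{k+C_R\dz}(x)\ge p_1\cdot x=C^H_k(x)+\dz$, which is the asserted inequality on $\{|x|=1\}$; multiplying by $|x|$ gives \eqref{xew5} for all $x\in\rn$.

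There is no genuine difficulty in this argument; the one point to watch is \emph{uniformity of the constants}: the radius $M_R$ and the Lipschitz constant $\Lambda_R$ must be extracted once and for all from the single compact set $\{H\le R\}$ (dilated by $1$), so that they depend only on $H$ and $R$, and not on the particular $k\le R$ or $\dz\in(0,1)$. I also note that only \eqref{assh0} (convexity and coercivity of $H$, with $H(0)=\min_{\rn}H=0$) is used in this proof; condition (\textbf{A}) of Theorem \ref{lla} is not needed for Corollary \ref{xew4}.
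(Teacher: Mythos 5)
Your proof is correct, and its skeleton (homogeneity reduction to $|x|=1$, the maximizer $p_0$ of $p\mapsto p\cdot x$ over $\{H\le k\}$, and the competitor $p_0+\dz x$) coincides with the paper's; the genuine difference is how the increment $H(p_0+\dz x)-H(p_0)$ is controlled. The paper invokes Lemma \ref{la-mu}(ii) to produce $t_x>0$ with $t_x x\in\partial H(p_x)$, bounds $t_x\le C(R)$ via Lemma \ref{LEM3.1}(iv), and then asserts $H(p_x+\dz x)\le H(p_x)+t_x\dz$ ``by convexity''; as stated this last step is delicate, since the subgradient inequality at $p_x$ gives the reverse bound to first order, and a rigorous upper bound needs a subgradient of $H$ at the shifted point $p_x+\dz x$ (or, equivalently, a local Lipschitz bound). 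You control the increment directly by the Lipschitz constant $\Lambda_R$ of the finite convex function $H$ on $\overline{B(0,M_R+1)}$, which is exactly such a bound, is uniform in $k\le R$ and $\dz\in(0,1)$, and uses only \eqref{assh0}; consequently your remark that condition ({\bf A}) is not needed for this corollary is also right, since in the paper ({\bf A}) enters only through Lemma \ref{la-mu}. In short, your argument is a more elementary and more robust rendering of the same perturbation; what the paper's route buys is merely the geometric interpretation of the constant as a bound on the slopes $t_x$ along maximizing directions, which is not needed for \eqref{xew5} itself.
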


\begin{proof}
By the homogeneity, it suffices to show \eqref{xew5} for $x\in\rn$
with $|x|=1$.  Let $p_x\in H^{-1}(k)$ be such that
$$C_k^H(x)=\langle p_x, x\rangle.$$
Then Lemma \ref{la-mu} implies that there exists $t_x>0$ such that
$t_x x\in\partial H(p_x)$. Moreover, by Lemma \ref{LEM3.1} (iv),
we see that there exists $C(R)>0$ such that
$$|t_x|\le C(R).$$
Note that
$$ C_{k}^H( x)+\dz=p_x\cdot x+\dz= (p_x+\dz x)\cdot  x,$$
and  the convexity of $H$ implies that
$$H(p_x+\dz x)\le  H( {p_x} ) +t_x x \cdot  \dz x=k+t_x\dz\le k+C(R)\dz.$$
Hence \eqref{xew5} holds.
\end{proof}

\begin{cor}\label{com-linear} For $n\ge 2$,
assume $H$ satisfies both \eqref{assh0} and ({\bf A}) of Theorem \ref{lla}.
Then every linear function $u$ is an absolute minimizer for $H$ in $\rn$ .
\end{cor}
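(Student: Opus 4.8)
The plan is to invoke the characterization of absolute minimizers in Lemma \ref{LEM7.13}: since $H$ satisfies both \eqref{assh0} and condition ({\bf A}), it suffices to show that any linear function $u(x)=p\cdot x + c$ lies in $CC_H(\rn)$, i.e. that $u$ enjoys the property of comparison with cones from above and from below. By symmetry (replace $u$ by $-u$, noting $-u$ is again linear), it is enough to verify comparison with cones from above. Fix a bounded open set $V\Subset\rn$, a point $x_0\in\rn\setminus V$, and $k\ge 0$; I must show
\begin{equation*}
\max_{\overline V}\big\{u(x)-C^H_k(x-x_0)\big\}=\max_{\partial V}\big\{u(x)-C^H_k(x-x_0)\big\}.
\end{equation*}

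First I would reduce this to a one-sided Lipschitz-type inequality along segments. The function $g(x):=u(x)-C^H_k(x-x_0)=p\cdot x - C^H_k(x-x_0)+c$ is concave on $\rn\setminus\{x_0\}$ (as a linear function minus the convex function $C^H_k(\cdot-x_0)$), hence concave on the convex set $\overline V$ provided $x_0\notin \overline V$; actually $x_0\notin V$ and one may need to handle $x_0\in\partial V$ separately by a limiting argument, approximating $x_0$ by points outside $\overline V$ and using continuity of $C^H_k$. A concave function on a compact convex set attains its maximum; but that alone does not force the max onto $\partial V$. Instead, the key point is: along any ray emanating from $x_0$, the function $t\mapsto g(x_0+t\,\xi)$ is concave in $t$ on $t>0$, and I claim it is monotone. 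Indeed, for $|\xi|$ fixed, $C^H_k(t\xi)=t\,C^H_k(\xi)$ is linear in $t$, so $g(x_0+t\xi)=p\cdot x_0 + c + t\big(p\cdot\xi - C^H_k(\xi)\big)$ is in fact \emph{affine} in $t$. Hence along every segment through $x_0$, $g$ is affine, so its maximum over any segment contained in $\overline V$ is attained at an endpoint. Since $V$ is open and $x_0\notin V$, for any $x\in V$ the ray from $x_0$ through $x$ exits $V$ at some boundary point $x^\ast\in\partial V$ with $g(x^\ast)\ge g(x)$; this yields the claimed identity. (When $x_0\in\partial V$, the same ray argument applies directly, or pass to the limit.)

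Alternatively — and this is cleaner — I would simply use Lemma \ref{LEM7.11}: the linear function $u(x)=p\cdot x+c$ has $Du\equiv p$, so setting $k_0:=H(p)$ we have $H(Du)\le k_0$ everywhere, hence by (i)$\Rightarrow$(ii) of Lemma \ref{LEM7.11}, $u(x)-u(y)\le C^H_{k_0}(x-y)$ for all $x,y$; moreover since $H(Du)\equiv k_0$ exactly, one also gets the matching lower bound $u(x)-u(y)\ge -C^H_{k_0}(y-x)$, using that $C^H_{k_0}(z)\ge p\cdot z$ with equality achievable. This exact two-sided control, together with the affineness of $t\mapsto C^H_k(t\xi)$ noted above, is precisely what is needed to check both $CCA_H$ and $CCB_H$ directly from the definitions. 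I expect no real obstacle here; the only mild care needed is the boundary case $x_0\in\partial V$, handled by continuity/approximation, and keeping track that ({\bf A}) is only invoked to legitimize the use of Lemma \ref{LEM7.13} (the comparison-with-cones characterization), not in the elementary verification itself. I would then conclude via Lemma \ref{LEM7.13} that $u\in AM_H(\rn)$.
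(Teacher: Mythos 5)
Your proposal is correct, but it takes a genuinely different route from the paper. The paper verifies the definition of absolute minimizer directly: given a competitor $v$ with $v=u$ on $\partial V$, it restricts attention to the segment of a line in the direction $q\in\partial H(e)$ (where $u(x)=a+e\cdot x$, and $q\ne0$ thanks to Corollary \ref{propH}), computes the increment of $u$ along that segment via Lemma \ref{la-mu}(i), namely $C^H_{H(e)}(q)=e\cdot q$, bounds the increment of $v$ by $C^H_k(y-x)$ with $k=\|H(Dv)\|_{L^\fz(V)}$ via the cone--Lipschitz estimate of Lemma \ref{LEM7.11}, and concludes $H(e)\le k$; this is a calibration-type argument that never passes through the comparison-with-cones characterization. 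You instead verify that a linear function lies in $CC_H$ --- the key point being that for fixed $\xi$ the map $t\mapsto u(x_0+t\xi)-C^H_k(t\xi)$ is affine in $t\ge0$ because $C^H_k$ is positively one-homogeneous, so on each ray through the cone vertex $x_0$ the extremum over the portion inside $\overline V$ is attained on $\partial V$ --- and then invoke the equivalence in Lemma \ref{LEM7.13}. Your $CC_H$ verification is elementary and uses only \eqref{assh0}; condition ({\bf A}) enters solely to legitimize Lemma \ref{LEM7.13}, which is imported from \cite{acjs}, so there is no circularity. What the paper's route buys is independence from the ACJS equivalence (it argues straight from the definition of $AM_H$); what yours buys is that the ray argument needs no subdifferential facts about $H$ at all.

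Two small points to tidy. First, Lemma \ref{LEM7.13} is stated for bounded $u\in C^0(U)$, and a nonconstant linear function on $\rn$ is unbounded; apply it on bounded subdomains: any $V\Subset\rn$ lies in a ball $B$, $u|_B$ is bounded and in $CC_H(B)$, hence $u\in AM_H(B)$, which suffices since membership in $AM_H(\rn)$ is tested only on sets $V\Subset\rn$. Second, your ``cleaner'' alternative is not by itself sufficient: the two-sided cone--Lipschitz bounds at level $k_0=H(p)$ do not give comparison with cones $C^H_k$ for levels $k<k_0$; it is the ray-affineness argument that carries the proof for all $k\ge0$, so that should remain the main step.
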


\begin{proof} Write $u(x)=a+e\cdot x$ for all $x\in\rn$.  It is obvious
that $u$ is an absolute minimizer when $e=0$. So we may assume $e\ne 0$
so that for  $q\in\partial H(e)$, $q\ne 0$.
Given any domain $\Omega\Subset\rn$, let $v\in C^{0,1}(\Omega)\cap C^0(\overline\Omega)$ with  $v=u$ on $\partial \Omega$.
For $x^0\in\Omega$, denote by $(x,y)$ the component of $\rr q\cap \Omega$ containing $x^0$. We may assume that $y=x+t_0q$ for some $t_0> 0$.  Then
by Lemma \ref{la-mu}, we have
$$v(y)-v(x)= u(y)-u(x)= e\cdot (y-x)= t_0 e\cdot q=t_0C^H_{H(e)}(q).$$
On the other hand, if we let $k=\|H(Dv)\|_{L^\fz(\Omega)}$, then by Lemma \ref{LEM7.13}, we have
$$v(y)-v(x)\le C^H_{k}(y-x)= t_0C^H_{k}(q) .$$
Therefore we obtain that
$$C^H_{H(e)}(q)\le C^H_{k}(q),$$
which implies that $\|H(Du)\|_{L^\infty(\Omega)}=H(e)\le k$, that is, $u\in AM_H(\Omega)$.
\end{proof}

\begin{lem}\label{LEMest} For $n\ge 2$, assume $H$ satisfies \eqref{assh0}.
For $x\in\rn$, $r>0$, and $0<\dz<1$, if $u\in CC_H(B(x_0,r))$
and $e\in \mathscr D(u)(x_0;r; \dz)$, with  $\frac14\le H(e) \le 4$,
then there exists $C>0$ such that
 $$  \|Su\|_{L^\fz(B(x_0,\frac{3r}4))}\le H(e)+C\dz.$$
\end{lem}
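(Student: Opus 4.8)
The plan is to bound $Su$ pointwise on $B(x_0,\frac{3r}4)$ by controlling the cone slope $\widehat S^+_t u$ at the single \emph{fixed} scale $t=\frac r4$, and then to invoke the monotonicity of $t\mapsto\widehat S^+_t u$ from Lemma \ref{LEMcone} to pass to the limit $t\to 0$. So fix $x\in B(x_0,\frac{3r}4)$ and put $t_0=\frac r4$. Since $|x-x_0|+t_0<r$, we have $\overline{B(x,t_0)}\subset B(x_0,r)$, so $\widehat S^+_{t_0}u(x)$ is well defined; moreover, by Lemma \ref{LEMcone} (applied on the bounded domain $U=B(x_0,r)$, where $u\in CC_H(U)$) the map $t\mapsto\widehat S^+_t u(x)$ is monotone increasing and $Su(x)=\widehat S^+u(x)=\lim_{t\to 0^+}\widehat S^+_t u(x)\le\widehat S^+_{t_0}u(x)$. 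Hence it suffices to prove $\widehat S^+_{t_0}u(x)\le H(e)+C\dz$ with $C$ depending only on $H$.

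For the key estimate, write $\ell(y)=u(x_0)+e\cdot(y-x_0)$. Every $y\in\partial B(x,t_0)$ satisfies $|y-x_0|<r$, so $u(y)\le\ell(y)+\dz r$; also $u(x)\ge\ell(x)-\dz r$. Subtracting, and using $|y-x|=t_0=\frac r4$,
\begin{equation*}
u(y)-u(x)\le e\cdot(y-x)+2\dz r=e\cdot(y-x)+8\dz|y-x|.
\end{equation*}
Because $H(e)\le 4$, the definition of the cone function gives $e\cdot(y-x)\le C^H_{H(e)}(y-x)$, whence
\begin{equation*}
u(y)-u(x)\le C^H_{H(e)}(y-x)+8\dz|y-x|,\qquad\forall\, y\in\partial B(x,t_0).
\end{equation*}
Assume first $\dz<\frac18$; then Corollary \ref{xew4} (with $R=4$, legitimate since $\frac14\le H(e)\le 4$, and perturbation parameter $8\dz\in(0,1)$) yields $C^H_{H(e)}(z)+8\dz|z|\le C^H_{H(e)+8C_4\dz}(z)$ for all $z\in\rn$, with $C_4$ depending only on $H$. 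Therefore $u(y)-u(x)\le C^H_{H(e)+8C_4\dz}(y-x)$ for all $y\in\partial B(x,t_0)$, which is exactly $\widehat S^+_{t_0}u(x)\le H(e)+8C_4\dz$. Combined with the first paragraph this gives $\|Su\|_{L^\infty(B(x_0,\frac{3r}4))}\le H(e)+8C_4\dz$. For the remaining range $\frac18\le\dz<1$, one observes that $u\in CC_H(B(x_0,r))$ together with $|u-\ell|\le r$ on $B(x_0,r)$ and $|e|\le\sup\{|p|:H(p)\le 4\}<\infty$ (coercivity of $H$) forces $\|Su\|_{L^\infty(B(x_0,\frac{3r}4))}$ to be bounded by a constant depending only on $H$ --- e.g.\ by re-running the cone comparison above with the crude bound $u(y)-u(x)\le Ar$ in place of the linear one --- so the inequality again holds after enlarging $C$.

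The whole content of the argument is concentrated in the choice of the fixed scale $t_0=\frac r4$: at an infinitesimal scale the uniform error $\dz r$ carried by $e\in\mathscr D(u)(x_0;r;\dz)$ is useless, but at scale $\frac r4$ it reads as $8\dz|y-x|$, i.e.\ a genuine $O(\dz)$ perturbation of the cone slope, which Corollary \ref{xew4} is precisely designed to absorb into an $O(\dz)$ shift of the cone level; the monotonicity in Lemma \ref{LEMcone} then transfers this bound to all smaller scales and hence to $Su$. The only bookkeeping point --- matching the multiplicative factor $8$ against the restriction to perturbation parameter $<1$ in Corollary \ref{xew4} --- affects nothing but the value of $C$ and requires no new idea; I do not expect any serious obstacle here.
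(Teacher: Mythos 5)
Your proof is correct and takes essentially the same route as the paper's: compare $u$ at the fixed scale $\frac r4$ with the cone of level $H(e)$, rewrite the error $2\dz r$ as $8\dz|y-x|$ and absorb it into a level shift $H(e)+C\dz$ via Corollary \ref{xew4}, then pass to $Su$ through the comparison-with-cones property and the identity $Su=\widehat S u$ from Lemma \ref{LEMcone}. Your separate treatment of the range $\frac18\le\dz<1$ is just bookkeeping that the paper leaves implicit in the constant $C$.
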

\begin{proof} From $e\in \mathscr D(u)(x_0;r; \dz)$, we can deduce that
  $$
 |u(y)-u(x)-e \cdot (y-x) | \le 2\dz r\quad\forall x,y\in B(x_0,r).
$$
Thus by \eqref{xew5} there exists $C>0$
such that for $x\in B(x_0,\frac{3r}4)$ and $y\in\partial B(x,\frac{r}4)$, it holds that
$$u(y)\le u(x)+ e \cdot (y-x)+2\dz r\le u(x)+ e \cdot (y-x)+8\dz|x-y|\le u(x)+ C_{H(e)+C\dz}^H(y-x).$$
This and $u\in CC_H(B(x_0,r))$ imply that
$$u(y)\le    u(x)+ C_{H(e)+C\dz}^H(y-x)\quad \forall y\in  B(x,\frac{r}4).$$
Therefore we obtain
 $$Su(x)\le H(e)+C\dz, \ \forall x\in B(x_0,\frac{3r}4).$$
 This completes the proof.
\end{proof}

\begin{lem}\label{xxlem}
For $n\ge 2$, assume $H$ satisfies both \eqref{assh0} and
({\bf A}) of Theorem \ref{lla}.
For any $\ez>0$, $p\in\rn$  and $R\ge 1$, there exists $\tau (p,R,\ez)$
such that for any $\dz\in(0,\tau )$ and vector $ q\in {\mathbb S}^{n-1} $
 satisfying
\begin{equation}\label{cond}
C^H_{H(p)-R\dz}(q )\le p\cdot q +R\dz ,
\end{equation}
we  have
\begin{equation}\label{est}
\inf_{\hat q\in \partial H(p)}|q-\frac{\hat q}{|\hat q|}|\le \ez.
\end{equation}
\end{lem}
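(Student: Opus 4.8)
The plan is to argue by contradiction, combining a compactness argument with the geometric description of the cone function supplied by Lemma \ref{la-mu}(ii). First I would reduce to the case $p\neq0$, equivalently $H(p)>0$ (by Corollary \ref{propH}): this is the only case needed in the sequel and the only one in which \eqref{cond} is non-vacuous, and one then takes the threshold $\tau=\tau(p,R,\ez)$ no larger than $H(p)/(2R)$, so that $H(p)-R\dz\ge H(p)/2>0$ for all $\dz\in(0,\tau)$ and $C^H_{H(p)-R\dz}$ is a genuine cone function of a positive level.

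Assuming \eqref{est} fails for every positive threshold, I would extract $\ez_0>0$ together with sequences $\dz_i\downarrow0$ and $q_i\in{\mathbb S}^{n-1}$ such that
\begin{equation*}
C^H_{H(p)-R\dz_i}(q_i)\le p\cdot q_i+R\dz_i
\qquad\text{and}\qquad
\inf_{\hat q\in\partial H(p)}\Big|q_i-\frac{\hat q}{|\hat q|}\Big|>\ez_0
\quad\text{for every }i,
\end{equation*}
and, after passing to a subsequence, $q_i\to q_0$ for some $q_0\in{\mathbb S}^{n-1}$. The crucial step is to let $i\to\fz$ in the first inequality. For fixed $k>0$ the map $x\mapsto C^H_k(x)$ is the support function of the compact convex body $\{H\le k\}$, hence Lipschitz with constant bounded locally uniformly in $k$; and $k\mapsto\{H\le k\}$ is continuous in the Hausdorff distance on $(0,\fz)$, the one delicate point being left-continuity in $k$, which holds because $H(0)=0<k$ forces $\{H<k\}$ to be nonempty and dense in $\{H\le k\}$. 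Thus $(k,x)\mapsto C^H_k(x)$ is continuous on $(0,\fz)\times\rn$, and since $H(p)-R\dz_i\to H(p)>0$ we get $C^H_{H(p)}(q_0)\le p\cdot q_0$; combined with the trivial bound $C^H_{H(p)}(q_0)\ge p\cdot q_0$ coming from $p\in\{H\le H(p)\}$, this gives $C^H_{H(p)}(q_0)=p\cdot q_0$.

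Now I would invoke Lemma \ref{la-mu}(ii) with $k=H(p)>0$, $z=q_0\neq0$, and $p_0=p$ (which lies in $H^{-1}(H(p))$ and realizes the supremum defining $C^H_{H(p)}(q_0)=p\cdot q_0$): this produces $t_0>0$ with $t_0q_0\in\partial H(p)$. Using $\hat q:=t_0q_0$ as a competitor in the infimum and $|q_0|=1$, so that $\hat q/|\hat q|=q_0$, we obtain
\begin{equation*}
\inf_{\hat q\in\partial H(p)}\Big|q_i-\frac{\hat q}{|\hat q|}\Big|\le|q_i-q_0|\longrightarrow0,
\end{equation*}
contradicting the lower bound $>\ez_0$ and proving \eqref{est}.

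The step I expect to be the main obstacle is the passage to the limit, i.e.\ upgrading the approximate equality in \eqref{cond} to the exact identity $C^H_{H(p)}(q_0)=p\cdot q_0$; this is exactly where the joint continuity of $C^H_k(x)$ on $(0,\fz)\times\rn$ — and hence the role of the normalization $H(0)=0$ in \eqref{assh0} — is used. After that the argument is soft: Lemma \ref{la-mu}(ii) identifies $q_0$ with a normalized subgradient of $H$ at $p$, and the contradiction follows at once.
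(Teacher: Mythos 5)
Your proof is correct, and its skeleton is the same as the paper's: argue by contradiction, extract a convergent sequence of unit vectors $q_i\to q_0$, upgrade the approximate inequality to the exact identity $C^H_{H(p)}(q_0)=p\cdot q_0$, and then invoke Lemma \ref{la-mu}(ii) (with $p_0=p$, $k=H(p)>0$, $z=q_0$) to produce $t_0q_0\in\partial H(p)$ and contradict the assumed lower bound $\ez_0$. The one place where you diverge is how the $\dz$-perturbed level $H(p)-R\dz$ is handled. The paper first proves the fixed-level estimate \eqref{cond1}, namely $C^H_{H(p)}(q)\le p\cdot q+C_1(H,R,p)\dz$, by taking a maximizer $p_0\in H^{-1}(H(p))$ of $C^H_{H(p)}(q)$ and radially shrinking it to $(1-\theta_0)p_0$ with $H((1-\theta_0)p_0)=H(p)-R\dz$, using convexity and $H(0)=0$ to get $\theta_0\le R\dz/H(p_0)$; after that the compactness argument runs at the single level $H(p)$, so only continuity of $C^H_{H(p)}(\cdot)$ in the spatial variable is needed. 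You instead pass to the limit in the level as well, via Hausdorff continuity of $k\mapsto\{H\le k\}$ on $(0,\fz)$ (with the left-continuity argument through density of $\{H<k\}$, again resting on $H(0)=0<k$ and convexity) and the resulting joint continuity of $(k,x)\mapsto C^H_k(x)$. Both are legitimate; the paper's scaling step is more explicit and quantitative (it isolates the constant $C_1=R+R|p_0|/H(p_0)$), while your route is softer and, as a bonus, you address the degenerate case $p=0$ and the positivity of the level $H(p)-R\dz$ more carefully than the paper does, which tacitly assumes $H(p)>0$ when dividing by $H(p_0)$ and when applying Lemma \ref{la-mu}(ii).
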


\begin{proof}
First we claim that
\begin{equation}\label{cond1}
C^H_{H(p) }(q )\le p\cdot q+C_1(H,R,p)\dz.
\end{equation}
To see this, let $p_0\in \rn$, with $H(p_0)=H(p)$, and $t_0>0$ be such that
$$C^H_{H(p) }(q )=p_0\cdot q.$$
Then Lemma \ref{xew4} implies that $t_0q\in\partial H(p_0)$.
Let $\theta_0\in(0,1)$ be such that
$H((1-\theta_0)p_0)= H(p)-R\dz$.
Since $H((1-\theta_0)p_0)\le (1-\theta_0)H(p_0)$,
it follows that $\theta_0\le \frac{R\dz}{H(p_0)}$.
 Thus by \eqref{cond} we have
 \begin{eqnarray*}C^H_{H(p) }(q )&=&(1-\theta_0)p_0\cdot q+\theta_0 p_0\cdot q\le
 C^H_{H(p)-K\dz}(q)+ \theta |p_0|\\
 &\le&  p\cdot q +\big(R+R\frac{|p_0|}{H(p_0)}\big)\dz.
 \end{eqnarray*}
This yields \eqref{cond1} with $C_1=R+R\frac{|p_0|}{H(p_0)}$.

It suffices to prove \eqref{est} under the assumption \eqref{cond1}.
We prove this by contradiction.  For, otherwise,
there would exist $\ez_0>0$ and  a sequence $ q_i\in \mathbb S^{n-1}$
satisfying
$$
C^H_{H(p) }(q_i )\le p\cdot q_i +\frac{C_1}{i},
$$
but
\begin{equation}\label{gap}
\inf_{\hat q\in \partial H(p)}|q_i-\frac{\hat q}{|\hat q|}|\ge \ez_0.
\end{equation}
 Assume that   $q_i\to q_\fz\in\mathbb S^{n-1}$ as $i\to\fz$.
Then
$$C^H_{H(p ) }(q_\fz )= p \cdot q_\fz,$$
so that by Lemma \ref{la-mu}, there exists $t_\fz>0$ such that
 $t_\fz q_\fz\in\partial H(p )$. From \eqref{gap}, we would have
 $$|q_i-q_\fz|=\big|q_i-\frac{t_\fz q_\fz}{|t_\fz q_\fz|}\big|
 \ge \inf_{\hat q\in \partial H(p)}|q_i-\frac{\hat q}{|\hat q|}|\ge \ez_0.$$
 This is impossible.
\end{proof}

   \begin{rem}\rm In general, under the assumptions \eqref{assh0} and
   ({\bf A}) of Theorem \ref{lla}, we may not be able to replace the dependence of $\tau$ on
   $p$ by that on $H(p)$. Here is an example. Let $K\subset\mathbb R^2$
   be a symmetric, strictly convex domain, containing $0$, whose
   bounded by a closed  curve $\gz:\mathbb S^1\to\rr^2$ such that
    $$|\gz(t)-\gz(t')|=\ell(\gz)|t-t'|,  \mbox{whenever $ t $ and $ t' $ are sufficiently close}.$$
    Assume that $\gz$ is differentiable, except at $(1,0)$.
    Define $H:\rr^2\to\mathbb R$ by  letting $H(0)=0$,
    $H(p)=1$ whenever $p\in \gz$, and
    $H(p)=|p|^2H(\frac p{|p|})$ for $p\not=0$.
    Then $\partial H(\gz(1,0))=[a,b]$ for some $a,b\ne 0$, with
    $a/|a|\ne b/|b|$. Let $q=\frac{a+b}{|a+b|}$. Then,
    for $\theta\in(0,\frac{\pi}4)$, we have
   $$  C^H_1(q )=\gz(1,0)\cdot q\le  \gz(\cos\theta,\sin\theta)\cdot q
        + |\gz(1,0)- \gz(\cos\theta,\sin\theta)||q |\le  \gz(1,0)\cdot q +C\theta.$$
On the other hand, observe that  $\gz(\cos\theta,\sin\theta)\to \gz(1,0)$ and
      $DH(\gz(\cos\theta,\sin\theta) )$ converges $a$ or $b$ as $\theta\to0^+$.
 Set
$$
q_\theta=\frac{DH(\gz(\cos\theta,\sin\theta) )}{|DH(\gz(\cos\theta,\sin\theta) )|}.
$$
Then we have
$$
\displaystyle\liminf_{\theta\to0^+}|q_\theta-q|\ge \frac12 \big|\frac{ a}{|a|}-\frac{ b}{|b|} \big|>0.
$$
\end{rem}

\section{Sharpness of condition ({\bf A}): a counterexample}

In this section, we will illustrate, by constructing a counterexample, that ({\bf A}) is
sharp.  By the reason as in Section 1.1, we always assume
$H$ satisfies \eqref{assh0}. 
We begin with  the following lemma, which
is motivated by the example constructed by \cite{k11}. 
 \begin{lem} \label{LEM3.4} For $n\ge 2$, assume
$H$ satisfies \eqref{assh0} and $H$ is a constant in  a line segment
$[a,b]\subset\rn$, with $a\ne b$.
 For any   $f\in C^{0,1}\left(\rr\right)$, with $\|f'\|_{L^\fz\left(\rr\right)}\le  1$,
define
\begin{equation}\label{E0}
u_f\left(x\right)=\frac{b+a}2\cdot x +f\big( \frac{b-a}2\cdot x\big)\quad\forall x\in\rn.
\end{equation}
Then  $u_f\in AM_H(\rn)$,  $ H(Du_f) = H(a)$ almost everywhere in $\rn$, and
$Su\equiv H(a)$ in $\rn$.

  \end{lem}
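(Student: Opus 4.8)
\textbf{Proof plan for Lemma \ref{LEM3.4}.}

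The plan is to exploit the characterization of absolute minimizers in Lemma \ref{LEM7.13}: it suffices to show $u_f\in CC_H(\rn)$, i.e.\ that $u_f$ enjoys comparison with cones from above and below. Write $\nu=\frac{b-a}2$ and $\mu=\frac{b+a}2$, so $u_f(x)=\mu\cdot x+f(\nu\cdot x)$. First I would compute $Du_f$: at points of differentiability of $f$ one has $Du_f(x)=\mu+f'(\nu\cdot x)\,\nu$, which lies on the segment $[a,b]$ since $|f'|\le 1$ (indeed $\mu+t\nu\in[a,b]$ for $t\in[-1,1]$). By hypothesis $H$ is constant on $[a,b]$, so $H(Du_f)=H(a)$ a.e.; this gives the second assertion immediately, and by Lemma \ref{LEM7.11} it gives one-sided cone comparison bounds $u_f(y)-u_f(x)\le C^H_{H(a)}(y-x)$ for all $x,y$.

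For the comparison with cones property itself, the key point is that the $\sup$/$\inf$ defining the cone comparison are \emph{achieved} along segments in the direction of a subgradient. Fix $q\in\partial H(\frac{a+b}2)$; by Lemma \ref{LEMconst}, applied to the segment $[a,b]$ on which $H$ is constant, we have $q\perp(b-a)$, hence $q\perp\nu$, and $[a,b]\subset\partial L(q)$, so $C^H_{H(a)}(q)=\mu\cdot q=(\mu+t\nu)\cdot q$ for every $t$. Now, to verify $u_f\in CCA_H(U)$ for a bounded open $V\Subset\rn$, $x_0\notin V$, $k\ge0$: if $k<H(a)$ the claim is easy since $C^H_k<C^H_{H(a)}$ strictly in every nonzero direction and one can use monotonicity; the substantive case is $k\ge H(a)$, where I would argue that the function $x\mapsto u_f(x)-C^H_k(x-x_0)$, restricted to any line through $x_0$ in the direction $q$, is nondecreasing in the relevant ray, because along such a line $u_f$ increases exactly at rate $C^H_{H(a)}(q)\le C^H_k(q)$ while $C^H_k(\cdot-x_0)$ increases at rate $C^H_k(q)$; hence the maximum over $\overline V$ on such a segment is attained at the boundary point $\partial V$. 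Running this over all lines in direction $q$ through points of $V$ and combining with the one-sided estimate from Lemma \ref{LEM7.11} yields $\max_{\overline V}\{u_f-C^H_k(\cdot-x_0)\}=\max_{\partial V}\{u_f-C^H_k(\cdot-x_0)\}$. The comparison from below ($CCB_H$) is obtained symmetrically, replacing $f$ by $-f$, or equivalently running the same argument with cone functions pointing to $x_0$. Once $u_f\in CC_H(\rn)$ is established, Lemma \ref{LEM7.13} gives $u_f\in AM_H(\rn)$, and then $Su_f\equiv H(a)$ follows from \eqref{E-2}, since $\|H(Du_f)\|_{L^\infty(B(x,r))}=H(a)$ for every ball.

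The main obstacle I anticipate is the bookkeeping in the "substantive case" $k\ge H(a)$: one must be careful that the maximum of $u_f-C^H_k(\cdot-x_0)$ over a \emph{segment} $[z,w]\subset\overline V$ in the direction $q$ is attained at an endpoint. The clean way is to decompose an arbitrary displacement $y-x$ along $\rr\nu\oplus(\text{hyperplane }q^\perp)$ is not quite the right split; instead one should note that $C^H_k$ is sublinear and $C^H_k(q)=C^H_{H(a)}(q)$ only when $k=H(a)$, so for $k>H(a)$ one needs the sharper observation that along direction $q$, $u_f(x+sq)-u_f(x)=s\,\mu\cdot q+f(\nu\cdot x)-f(\nu\cdot x)=s\,C^H_{H(a)}(q)\le s\,C^H_k(q)$ using $\nu\perp q$ to kill the $f$-term — this is exactly why the direction $q\perp\nu$ is the right one to test along. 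Handling general (not-maximal) $x_0$ and nonconvex-looking $V$ then reduces to slicing $V$ by the family of lines $\{x+\rr q\}$, which is routine. I would also double-check the degenerate subcases $\mu\cdot q=0$ and $k=0$ separately, though these cause no real trouble.
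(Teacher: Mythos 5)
Your computation of $Du_f$ and of $H(Du_f)=H(a)$ a.e.\ is fine, and your observation that the direction $q\in\partial H\big(\tfrac{a+b}2\big)$ is the right one because $q\perp\nu$ kills the $f$-term is exactly the geometric heart of the matter. But the route you build on it has genuine gaps. First, your reduction ``$u_f\in CC_H(\rn)\Rightarrow u_f\in AM_H(\rn)$'' invokes Lemma \ref{LEM7.13}, whose hypotheses include condition ({\bf A}); in Lemma \ref{LEM3.4} condition ({\bf A}) fails by assumption (that is the whole point of Section 4), and the paper's justification of Lemma \ref{LEM7.13} goes through the empty interior of the minimal level set, which can also fail here (e.g.\ $H(p)=\max\{|p|-1,0\}$). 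So within the paper's toolkit this implication is not available, and you would have to import a more general equivalence from the literature. Second, your verification of comparison with cones is itself flawed. In the case $k\ge H(a)$, along a line $x+\rr q$ that does \emph{not} pass through the vertex $x_0$, the map $s\mapsto C^H_k(x+sq-x_0)$ is only convex, not affine with slope $C^H_k(q)$; hence $u_f-C^H_k(\cdot-x_0)$ restricted to such a slice is (affine)$-$(convex)$=$concave and can perfectly well attain its maximum in the interior of the slice, so ``max on the segment is attained at an endpoint'' is false as stated. (For $k\ge H(a)$ the correct and standard argument does not use the direction $q$ at all: by Lemma \ref{LEM7.11}, $u_f(x)-u_f(z)\le C^H_k(x-z)$, and taking $z\in\partial V\cap[x_0,x]$, positive homogeneity gives $u_f(x)-C^H_k(x-x_0)\le u_f(z)-C^H_k(z-x_0)$.) Third, the case $k<H(a)$ is the genuinely substantive one --- there the cone-Lipschitz bound is unavailable and the specific structure of $u_f$ must enter --- yet you dismiss it with ``one can use monotonicity,'' which is not an argument; strict inequality of the cone functions in every direction does not by itself localize the maximum on $\partial V$.

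The paper avoids comparison with cones entirely and argues directly from the definition of absolute minimizer: given $V\Subset\rn$ and a competitor $v=u_f$ on $\partial V$ with $s=\|H(Dv)\|_{L^\fz(V)}$, it takes $q_0\in\partial H\big(\tfrac{a+b}2\big)$ (nonzero unless $H(a)=0$, which is trivial), uses Lemma \ref{LEMconst} to get $q_0\perp(b-a)$, and compares increments along a component $(x_0,y_0)$ of a line in direction $q_0$ inside $V$ with endpoints on $\partial V$: by Lemma \ref{LEM3.1}, $u_f(y_0)-u_f(x_0)=\frac{t_0}{|q_0|}\big(H(a)+L(q_0)\big)$ exactly, while Lemma \ref{LEM7.11} gives $v(y_0)-v(x_0)\le C^H_s(y_0-x_0)\le\frac{t_0}{|q_0|}\big(s+L(q_0)\big)$; equality of boundary values then forces $s\ge H(a)$. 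If you want to keep your cone-comparison strategy you must (i) supply a correct argument in both regimes of $k$ and (ii) justify the passage from $CC_H$ back to $AM_H$ without Lemma \ref{LEM7.13}; as it stands, the proposal does not prove the lemma.
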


 \begin{proof} It is easy to see that  $u_f\in C^{0,1}(\rn)$ and
     $$Du_f\left(x\right)= \frac{b+a}2+ \frac{b-a}2 f'\big( \frac{b-a}2\cdot x\big),\quad\mbox{almost every $x\in\rn$}.$$
 It follows from $\|f'\|_{L^\fz\left(\rr\right)}\le  1$ that
 $|f'\big(\frac{b-a}2\cdot x\big)|\le 1$ for almost all $x\in\rn$ and
 hence $Du_f\left(x\right)\in[a,b]$.
Thus $H (Du_f\left(x\right) )=H(a)$  for almost all $x\in\rn$.

 To show $u_f\in AM_{H}(\rn)$, let $V\Subset\rn$ be an arbitrary domain
 and $v\in   C^{0,1}\big(\overline V\big)$ be such
 that $v=u_f$ on $\partial V$, we want to show
 $$\|H\left(Dv\right)\|_{L^\fz\left(V\right)}:=s \ge \|H\left(Du_f\right)\|_{L^\fz\left(V\right)}=H(a).$$
This is trivially true, if  $k=H(a)=0$.
So we may assume  $k>0$.
Let $q_0\in \partial H\big(\frac{a+b}2\big)$. Then $q_0\ne0$.
For, otherwise,
$$H\left(0\right)- H\big(\frac{a+b}2\big)\ge -q_0\cdot \frac{a+b}2=0,$$
which implies that $H(a)=H\big(\frac{a+b}2\big)=0$, and contradicts to
$k>0$.

By Lemma \ref{LEMconst},
 we have $q_0\perp \left(b-a\right)$.
Let $x_0\in V$, $\ell_0 =x_0+\rr q_0$, and $\gz_0$
be  the component (an open interval) of $\ell_0\cap V$, containing $x_0$, and $t_0>0$ be  the length of $\gz_0$.
Write $\gz_0 =\left(x_0,y_0\right)$, with $x_0,y_0\in\partial V$, and $y_0=x_0+t_0\frac{q_0}{|q_0|}$. From  $q_0\perp \left(b-a\right)$,
we have that
 $$\frac{b-a}2\cdot y_0=\frac{b-a}2\cdot x_0.$$
Hence we obtain that
\begin{eqnarray*}
 u_f\left(y_0\right)-u_f\left(x_0\right)&=& \frac{b+a}2\cdot \left(y_0-x_0\right)+f\big(\frac{b-a}2\cdot y_0 \big)- f\big(\frac{b-a}2\cdot x_0\big)\\
 &=&\frac{b+a}2\cdot \left(y_0-x_0\right)=\frac {t_0}{|q_0|}\frac{b+a}2\cdot q_0.
\end{eqnarray*}
From $q_0\in \partial H\big(\frac{a+b}2\big)$ and Lemma \ref{LEM3.1}, this yields that
\begin{equation}\label{E25}u_f\left(y_0\right)-u_f\left(x_0\right)= \frac {t_0}{|q_0|}
\big(H\big(\frac{b+a}2\big)+L\big(q_0\big)\big)= \frac {t_0}{|q_0|} \big(k+L(q_0)\big).
\end{equation}

On the other hand,  it follows from Lemma \ref{LEM7.11} that
  $$v\left(y\right)-v\left(x\right)\le C_s^H\left(y-x\right),\ \mbox{whenever $[x,y]\subset V$}.$$
Hence we have
$$ v\left(y_0\right)-v\left(x_0\right)\le C_s^H\left(y_0-x_0\right)
=\frac {t_0}{|q_0|}\sup_{H(p)\le s} p\cdot q_0\le
\frac {t_0}{|q_0|} \big(s+L(q_0)\big).$$
From $u\left(y_0\right)-u\left(x_0\right)=v\left(y_0\right)-v\left(x_0\right)$,
and \eqref{E25}, we then conclude that
 $$\frac {t_0}{|q_0|} \big(s+L(q_0)\big)
 \ge \frac {t_0}{|q_0|} \big(H(a)+L(q_0)\big).$$
This yields that $s\ge H(a)$, i.e.
$\|H\left(Dv\right)\|_{L^\fz\left(V\right)} \ge \|H\left(Du_f\right)\|_{L^\fz\left(V\right)}$.
This completes the proof of Lemma \ref{LEM3.4}.
\end{proof}

The following result indicates that   ({\bf B}), ({\bf C})  or  ({\bf D}) may fail, without
condition ({\bf A}) in Theorem \ref{lla}.

\begin{lem}\label{nolla} For $n\ge 2$, assume
 $H$ satisfies \eqref{assh0} and $H$ is a constant in $[a,b]\subset\rn$,
 with $a\ne b$. Let $f(t)=|t|$ for $t\in\rr$ and
 $u_f$ be given by \eqref{E0}.
 Then $u_f$ is neither $C^1$, nor does not enjoy the linear approximation property
 \eqref{LAP}. If $\Omega=\rn$, then $u_f$ does not satisfy the Liouville property.
 \end{lem}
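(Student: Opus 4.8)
The plan is to take $f(t)=|t|$ and exhibit the resulting $u_f$ from \eqref{E0} as the required counterexample, relying entirely on Lemma \ref{LEM3.4}, which already gives $u_f\in AM_H(\rn)$ together with $H(Du_f)=H(a)$ a.e.\ and $Su_f\equiv H(a)$ on $\rn$. Spelling out \eqref{E0},
$$u_f(x)=\frac{b+a}2\cdot x+\big|\tfrac{b-a}2\cdot x\big|,\qquad x\in\rn,$$
and since $a\ne b$ the vector $\frac{b-a}2$ is nonzero, so $\Pi:=\{x\in\rn:(b-a)\cdot x=0\}$ is a genuine hyperplane through the origin. (Away from $\Pi$ one has $Du_f=b$ on $\{(b-a)\cdot x>0\}$ and $Du_f=a$ on $\{(b-a)\cdot x<0\}$, so $u_f$ is already visibly not $C^1$ across $\Pi$; I will, however, argue the sharper failure of \eqref{LAP} directly, which also re-proves non-differentiability.)

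First I would compute the blow-up of $u_f$ at a point $x_0\in\Pi$. For any $r>0$ and $y\in B(0,1)$ we have $(b-a)\cdot(x_0+ry)=r\,(b-a)\cdot y$, hence
$$\frac{u_f(x_0+ry)-u_f(x_0)}{r}=\frac{b+a}2\cdot y+\big|\tfrac{b-a}2\cdot y\big|=:g(y),$$
which is independent of $r$. The key elementary point is that $g$ is not the restriction of any linear function: if $g(y)=e\cdot y$ for all $y\in B(0,1)$, then adding this identity to the one obtained by replacing $y$ with $-y$ yields $|(b-a)\cdot y|=0$ for all such $y$, forcing $a=b$, a contradiction. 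Consequently no $e\in\rn$ can satisfy \eqref{LAP} at $x_0$ along any sequence $r_i\to0$; equivalently $\mathscr D u_f(x_0)=\emptyset$. In particular $u_f$ is not differentiable at $x_0$, so $u_f\notin C^1(\rn)$, and $u_f$ fails the linear approximation property \eqref{LAP}. Thus property ({\bf B}) of Theorem \ref{lla}, and property ({\bf C}) of Theorem \ref{lla0} in the case $n=2$, both fail for this $u_f$.

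For the Liouville statement, I would simply note that $|u_f(x)|\le\frac12\big(|b+a|+|b-a|\big)|x|\le C(1+|x|)$, so $u_f$ has linear growth at infinity in the sense of \eqref{LG}, while $u_f$ is not an affine function, being non-differentiable on $\Pi$; since $u_f\in AM_H(\rn)$ by Lemma \ref{LEM3.4}, this shows property ({\bf D}) of Theorem \ref{lla0} fails when $\Omega=\rn$. There is no real obstacle here once Lemma \ref{LEM3.4} is in hand: everything reduces to the exact difference quotient at points of $\Pi$, and the only line needing a moment's care is the verification that $g$ is genuinely nonlinear, handled by the $y\leftrightarrow-y$ symmetrization above.
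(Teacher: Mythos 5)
Your proposal is correct and follows essentially the same route as the paper: both exhibit $u_f$ as a one-dimensional kink in the direction $b-a$ (the paper after a normalization $a=0$, $b=\lambda_0 e_n$; you directly via the scale-invariant difference quotient at points of the hyperplane $\{(b-a)\cdot x=0\}$), conclude failure of differentiability and of \eqref{LAP}, and note linear growth plus nonlinearity to kill the Liouville property, with $u_f\in AM_H(\rn)$ supplied by Lemma \ref{LEM3.4}. Your explicit verification that the blow-up $g$ is nonlinear fills in the step the paper dismisses as "easy to see."
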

 \begin{proof}
 By choosing a new coordinate system, we may assume that
$a=0$ and $b=\lambda_0 e_n$ for some $\lambda_0>0$, where
$e_n=(0',1)$. Then
$$
u_f(x)=\frac{\lambda_0}{2}(x_n+|x_n|)
=\begin{cases} 0 & x_n<0\\
\lambda_0x_n & x_n\ge 0.
\end{cases}
$$
It is easy to see that $u_f$ is neither differentiable nor can be
linearly approximated at $(x',0)$ for any $x'\in\mathbb R^{n-1}$.
Also note that $\|H(Du_f)\|_{L^\fz(\rn)}=\lambda_0$ so  that
$\| Du_f \|_{L^\fz(\rn)}<\fz$ and $u_f$ has a linear growth near
the infinity. However, $u_f$ is not a linear function.
\end{proof}

\section{Proofs of Theorem \ref{LEMC4} and Theorem \ref{lla}:  ({\bf A})$\Rightarrow$({\bf B})}

This section is devoted to the proof of Theorem \ref{lla} ({\bf A}) $\Rightarrow$ ({\bf B}).
A crucial ingredient of the proof is the following theorem.

\begin{thm}\label{LEMC4}  For $n\ge 2$,
assume $H$ satisfies \eqref{assh0}  and ({\bf A}) of Theorem \ref{lla}.
If $u\in C^{0,1}(\rn)$   satisfies
\begin{equation}\label{F1}\mbox{$S^+_tu\left(0\right)=-S^-_tu\left(0\right)=k$,
\ {\rm{and}}\ $\max\big\{S^+_tu\left(x\right), -S^-_tu\left(x\right)\big\}\le k $,
$\forall x\in\rn$, $t>0$   }
  \end{equation}
  for some $0\le k<\fz$, then  there exists a vector $p_0\in\rn$ such that
  $H(p_0)=k$ and
 $$\mbox{$u(x)=u(0)+p_0\cdot x,$\ $\forall  x\in\rn$.}$$
  \end{thm}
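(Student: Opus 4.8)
The case $k=0$ is immediate: the hypothesis gives $S^+_tu(x)\le0$ for all $x,t$, so $T^tu(x)\le u(x)$, i.e. $u(y)-u(x)\le tL(\frac{y-x}t)$ for every $t>0$; minimizing in $t$ yields $u(y)-u(x)\le C^H_0(y-x)$, and since ({\bf A}) forces $H^{-1}(0)=\{0\}$ (Corollary \ref{propH}) we have $C^H_0\equiv0$, so $u$ is constant and $p_0=0$ works. Assume henceforth $k>0$. The first thing I would record is that the global half of the hypothesis, $\max\{S^+_tu(x),-S^-_tu(x)\}\le k$ for all $x,t$, is equivalent to
\[
u(y)-u(x)\le C^H_k(y-x)\qquad\forall\,x,y\in\rn,
\]
i.e. to $\|H(Du)\|_{L^\fz(\rn)}\le k$ (by Lemma \ref{LEM7.11}): indeed $T^tu(x)\le u(x)+kt$ for every $t$ means $u(y)-u(x)\le\inf_{t>0}\big(tL(\frac{y-x}t)+kt\big)$, and this infimum equals $C^H_k(y-x)$, the cone being the Lax--Oleinik evolution of a point; the identity, together with the existence of an exposing $p$ with $H(p)=k$, follows from the surjectivity statement $\cup_{t\ge0}H(\partial L(tz))=[0,\fz)$ proved inside Lemma \ref{la-mu}, which uses ({\bf A}).

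Next I would extract affine pieces from the equality at the origin. Since $S^+_tu(0)=k$ and $\|H(Du)\|_{L^\fz(\rn)}\le k$, Lemma \ref{LEM3.3} lets me pick $y^+_t\in\overline{B(0,R_kt)}$ attaining $T^tu(0)=\sup_y\big(u(y)-tL(\frac yt)\big)$, so $u(y^+_t)-u(0)=kt+tL(y^+_t/t)$. Comparing with $u(y^+_t)-u(0)\le C^H_k(y^+_t)$ and invoking Young's inequality $p\cdot\frac{y^+_t}t\le H(p)+L(\frac{y^+_t}t)$, every inequality is forced to be an equality; this yields $u(y^+_t)-u(0)=C^H_k(y^+_t)$, a vector $p^+_t$ with $H(p^+_t)=k$, $p^+_t\in\partial L(y^+_t/t)$ and $p^+_t\cdot y^+_t=C^H_k(y^+_t)$, and — sandwiching $u(sy^+_t)-u(0)$ between $C^H_k(sy^+_t)$ and $C^H_k(y^+_t)-C^H_k((1-s)y^+_t)$ — the statement that $u$ is affine on $[0,y^+_t]$ with $u(x)=u(0)+p^+_t\cdot x$ there. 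The symmetric computation with $-S^-_tu(0)=k$ and the flow $T_t$ gives a segment $[y^-_t,0]$ on which $u(x)=u(0)+p^-_t\cdot x$, $H(p^-_t)=k$, $p^-_t\in\partial L(-y^-_t/t)$.

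The remaining and decisive step is to upgrade these pieces to a globally affine function. Here I would let $t\to\fz$ (equivalently, pass to the blow-downs $u^\lz(x)=\lz^{-1}(u(\lz x)-u(0))$, which satisfy the same hypotheses since $S^\pm_tu^\lz(x)=S^\pm_{\lz t}u(\lz x)$, and whose locally uniform limits are positively $1$-homogeneous): using the local boundedness of $\partial L$ (Lemma \ref{LEM3.1}(iv)) to pass $y^\pm_t/t\to e^\pm$ and $p^\pm_t\to p^\pm$ with $H(p^\pm)=k$, $p^\pm\in\partial L(e^\pm)$, I obtain either a point $y^+$ with $u$ affine on the whole line $\rr y^+$, or a pair $y^\pm$ with $u$ affine on each segment $[sy^-,sy^+]$, $s\ge0$. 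Now Proposition \ref{LEMC3}(i) enters: $\partial L(e^+)$ is either a singleton or a segment on which $H$ is strictly monotone, so the constraint $H(p^+)=k$ pins $p^+$ down uniquely inside $\partial L(e^+)$, and likewise $p^-$. Tracking the optimal characteristics of the Hamilton--Jacobi flows through the two rays (in the spirit of Crandall--Evans \cite{ce} and Yu \cite{y07}), but using Lemma \ref{LEMconst} and this $\partial L$-dichotomy in place of strict convexity or $C^1$-regularity of $L$, forces $p^+=p^-=:p_0$; finally, sandwiching $u(x)-u(0)$ between $-C^H_k(-x)$ and $C^H_k(x)$ and using that $p_0$ exposes the relevant faces of $\{H\le k\}$ in every direction, gives $u(x)=u(0)+p_0\cdot x$ on all of $\rn$.

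The main obstacle is precisely this last upgrade. Producing the affine pieces is essentially Legendre-transform bookkeeping; the difficulty is that, under only ({\bf A}), $L$ need be neither $C^1$ nor strictly convex (Proposition \ref{LEMstrcon}), so the arguments of \cite{ce,y07,wy} do not transfer verbatim, and the characteristic/flow analysis must be carried out purely geometrically, exploiting that $H$ is constant on no line segment (Lemma \ref{LEMconst}) and that $\partial L$ therefore has the rigid structure of Proposition \ref{LEMC3}(i).
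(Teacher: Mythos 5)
Your preparatory steps are sound and coincide with the paper's Claim A: the $k=0$ case, the identification of the global hypothesis with $\|H(Du)\|_{L^\fz(\rn)}\le k$, and the extraction of rays $y^\pm$ (via maximizers $y^\pm_t$ of the flows, Young's inequality forced into equality, and the local boundedness of $\partial L$) on which $u$ is affine with slopes $p^\pm\in\partial L(\pm y^\pm)$, $H(p^\pm)=k$. But the theorem is not proved: the step you yourself call ``the main obstacle'' is exactly where the paper's work lies, and your proposal neither carries it out nor proposes a mechanism that could. Concretely, three ingredients are missing. First, in the non-collinear case you need much more than affinity on the two rays: the paper shows (from the same maximizers) that $L$ is affine on $[-y^-,y^+]$ (its \eqref{E8}) and that $u$ is affine on the whole two-dimensional wedge $\{\lz t y^++(1-\lz)sy^-\}$ (its \eqref{E10}); only this permits translating the base point to infinity along the null direction $x_s=\lz_0s y^++(1-\lz_0)sy^-$ and extracting a limit $v$ that is linear on a full line, and only the linearity of $L$ on $[-y^-,y^+]$ links $\partial L(y^+)$ and $\partial L(-y^-)$ (via Lemma \ref{LEM3.1}(iii)) so that a single $p_0$ serves both rays; your assertion that ``tracking optimal characteristics forces $p^+=p^-$'' has no argument behind it. Second, the reduction to a function linear on a full line still requires a separate rigidity statement (the paper's Lemma \ref{LEMC5}): one must run the flows once more at translated base points to get one-sided affine bounds $\hat p_x\cdot x\le u(x+se)-su(e)\le p_x\cdot x$ with $p_x,\hat p_x\in\partial L(e)$ depending on $x$ (\eqref{F3}--\eqref{F4}, and later \eqref{E22}--\eqref{E23}), then use condition ({\bf A}) through Proposition \ref{LEMC3}/Lemma \ref{LEMconst} to get the non-degeneracy $(p_\ast-p)\cdot e\ne0$ for $p\in\partial L(e)\setminus\{p_\ast\}$ (\eqref{pos}), and finally foliate $\rn$ by lines in direction $e$ over the hyperplane orthogonal to the segment $\partial L(e)$, on each leaf of which the upper and lower bounds collapse to $p_\ast\cdot x$; a concluding four-subcase analysis (each of $\partial L(y^+)$, $\partial L(-y^-)$ a singleton or a segment) is then needed. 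None of this is replaced by your observation that $H(p^+)=k$ determines $p^+$ uniquely inside $\partial L(y^+)$ -- that only controls $u$ on the ray, not off it.

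Third, the one concrete closing device you do propose would fail: the sandwich $-C^H_k(-x)\le u(x)-u(0)\le C^H_k(x)$ holds for \emph{every} function with $\|H(Du)\|_{L^\fz(\rn)}\le k$ and $u(0)=0$, and since in general $-C^H_k(-x)<p_0\cdot x<C^H_k(x)$ strictly off the special directions, it cannot force $u(x)=p_0\cdot x$; the correct sandwich in the paper is taken leafwise, with affine (not cone) barriers whose slopes lie in $\partial L(e)$ and agree on the leaf by \eqref{pos}. (A minor further point: your parenthetical claim that locally uniform limits of the blow-downs $u^\lz$ are positively $1$-homogeneous is unjustified and is not needed -- the paper works with $u$ itself.) So the proposal is a correct setup plus a correct diagnosis of where the difficulty sits, but the decisive rigidity argument -- the paper's Lemma \ref{LEMC5}, the wedge affinity, the translation to infinity, and the subcase analysis of $\partial L(\pm y^\pm)$ -- is absent, and the substitute offered does not work.
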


Employing  Lemmas \ref{LEM3.1} and \ref{LEMC3}, we first establish a
 weaker version of Theorem \ref{LEMC4}. Namely,

\begin{lem}\label{LEMC5}  For $n\ge 2$,
assume $H$ satisfies \eqref{assh0}  and ({\bf A}) of Theorem \ref{lla}.
If $u\in C^{0,1}(\rn)$  satisfies  \eqref{F1} for some $0< k<\fz$,  and
\begin{equation}\label{F2}
\mbox{$u(e)=k+L(e)$, and $u(se)=su(e),\ \forall s\in\rr,$}
\end{equation}
for some vector $0\ne e\in\rn$, then
there exists a vector $p_0\in\partial L(e)$ such that $H(p_0)=k$
and
  $$\mbox{$u(x)=  p_0\cdot x$,\   $\forall  x\in\rn$.}$$
  \end{lem}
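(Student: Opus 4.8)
The plan is to recast the slope hypothesis \eqref{F1} as a single comparison‑with‑cones inequality, to read off a distinguished vector $p_0$ from \eqref{F2}, and then to pinch $u$ from above and below by letting a point slide to infinity along the line $\rr e$. For the first step one uses the elementary duality identity $\inf_{t>0}\big(tk+tL(z/t)\big)=C^H_k(z)$ for $z\in\rn$, which follows from $tL(z/t)=\sup_{p}(p\cdot z-tH(p))$ together with $\{p:\,p\cdot w\le k+L(w)\ \forall w\}=\{p:\,H(p)\le k\}=:K_k$. Indeed, $S^+_tu(x)\le k$ for all $t>0$ is equivalent to $u(y)-u(x)\le tk+tL((y-x)/t)$ for all $y,t$, hence, taking the infimum over $t$, to $u(y)-u(x)\le C^H_k(y-x)$ for all $y$; the same is true for $-S^-_tu(x)\le k$. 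Thus the ``$\forall x$'' part of \eqref{F1} is equivalent to
\[
u(x)-u(y)\le C^H_k(x-y),\qquad \forall\,x,y\in\rn,
\]
and the equalities $S^+_tu(0)=-S^-_tu(0)=k$ become automatic (test $y=\pm te$ in the definitions of $T^tu(0),T_tu(0)$ using \eqref{F2}). Note $K_k$ is compact and convex, since $H$ is continuous and superlinear.

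Next I would pin down $p_0$. Testing the displayed inequality at $(x,y)=(e,0)$ and using \eqref{F2} gives $k+L(e)=u(e)\le C^H_k(e)$, while Young's inequality $p\cdot e\le H(p)+L(e)$ gives $C^H_k(e)\le k+L(e)$; hence $C^H_k(e)=\max_{p\in K_k}p\cdot e=k+L(e)$, and the maximum is attained. The set of maximizers $F:=\{p\in K_k:\,p\cdot e=k+L(e)\}$ is convex, and every $p\in F$ satisfies $H(p)=k$ (from $k+L(e)=p\cdot e\le H(p)+L(e)$ and $H(p)\le k$). So $F$ is a line segment contained in the level set $H^{-1}(k)$, and condition ({\bf A}) forces $F=\{p_0\}$ to be a single point. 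By Lemma \ref{LEM3.1}(i), $p_0\cdot e=H(p_0)+L(e)$ means $p_0\in\partial L(e)$, and we have just shown $H(p_0)=k$.

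Finally, the pinching. Fix $x\in\rn$ and put $C:=k+L(e)$. Step one and \eqref{F2} give, for every $s>0$, $u(x)\le u(-se)+C^H_k(x+se)=-sC+C^H_k(x+se)$ and $u(x)\ge u(se)-C^H_k(se-x)=sC-C^H_k(se-x)$. Writing $C^H_k(x+se)-sC=\sup_{p\in K_k}\big(p\cdot x+s(p\cdot e-C)\big)$ and $sC-C^H_k(se-x)=\inf_{p\in K_k}\big(p\cdot x+s(C-p\cdot e)\big)$, I would let $s\to\fz$: since $p\cdot e-C\le 0$ on $K_k$ with equality only at $p_0$, and $K_k$ is compact, both the supremum and the infimum concentrate at $p_0$, so both bounds tend to $p_0\cdot x$; hence $u(x)=p_0\cdot x$ for all $x$, with $p_0\in\partial L(e)$ and $H(p_0)=k$. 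The only real content is this last concentration step, which rests on compactness of the sublevel set $K_k$ (superlinearity of $H$) and, crucially, on uniqueness of the maximizer $p_0$ — and it is precisely here that ({\bf A}) is indispensable: were $H$ constant on a segment of $H^{-1}(k)$ there would be an entire face of maximizers and $u$ need not be linear, in agreement with the counterexamples of Section 4. Everything else is routine convex analysis.
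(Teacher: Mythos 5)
Your proof is correct, and it takes a genuinely different route from the paper. The paper's argument stays at the level of the Hamilton--Jacobi flow: it extracts vectors $p_x,\hat p_x\in\partial L(e)$ as limits of subdifferentials of $L$ at $e+\frac{x}{t}$ as $t\to\fz$, then invokes the structure theorem Proposition \ref{LEMC3} (under ({\bf A}), $\partial L(e)$ is a point or a segment on which $H$ is strictly monotone), locates $p_\ast\in\partial L(e)$ with $p_\ast\cdot e=u(e)$, and finishes by a case analysis together with a foliation of $\rn$ by lines parallel to $e$ over a hyperplane transverse to the segment. You instead (i) convert the slope hypothesis \eqref{F1} into the single global inequality $u(x)-u(y)\le C^H_k(x-y)$ via the duality $\inf_{t>0}\big(tk+tL(z/t)\big)=C^H_k(z)$ (valid here since $p=0$ is a Slater point, $H(0)=0<k$; equivalently, both sides are the support function of $K_k=\{H\le k\}$), (ii) identify $p_0$ as the unique maximizer of $p\mapsto p\cdot e$ over the compact convex set $K_k$, where ({\bf A}) enters only through the fact that the face $F=\{p\in K_k:\ p\cdot e=k+L(e)\}$ is a convex subset of $H^{-1}(k)$ and hence a single point, and (iii) squeeze $u(x)$ between $-sC+C^H_k(x+se)$ and $sC-C^H_k(se-x)$ and let $s\to\fz$, the concentration of the extremizers at $p_0$ following from compactness of $K_k$ and uniqueness of the maximizer. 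This buys a cleaner proof: no appeal to Proposition \ref{LEMC3}, no case distinction on whether $\partial L(e)$ is a singleton, and the role of ({\bf A}) is isolated in one transparent convexity step; note also that your $F$ equals $\partial L(e)\cap H^{-1}(k)$, so your $p_0$ is the same vector the paper produces. Two small presentational points: the duality identity in step one should be justified in a line (Lagrangian duality with the Slater point $0$, or the support-function argument just indicated), and your set $F$ is a priori a convex face rather than literally ``a line segment,'' though ({\bf A}) excludes any nondegenerate segment inside it, so the conclusion $F=\{p_0\}$ is unaffected; the remark that the equalities $S^\pm_tu(0)=\pm k$ are automatic from \eqref{F2} is correct but not needed.
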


  \begin{proof} (see Figure 1 below). Observe that by \eqref{F2}  $u(0)=0$ , and by \eqref{F1}
  $S^+_{ t}u\left(  (t-s)e\right)\le  k  $ for all $t>0$ and $s\in\rr$.
  Hence we have
  $$
   \frac{u\left(x+se\right) -u\left(   \left(s-t\right)e\right) }t
   -L\big(\frac{x+se-(s-t)e}t\big)\le  k \quad\forall x\in\rn.$$
   This, combined with  \eqref{F2} again,  implies that
   $$
   -\frac{u\left(x+se\right) }t +\frac{  s}t u\left(   e\right)\ge u\left(e\right) - k   -L\big( e+\frac{  x }t\big)=L(e) -L(e+\frac{  x }t), \ \forall x\in\rn, t>0,s\in\rr.$$
 Hence by the convexity of $L$, there exists $p_{t,x}\in
 \partial L\big(e+\frac{x}{t}\big)$ such that
 $$
  u(x+se) - su(e)\le  p_{t,x}\cdot   x, \quad\forall x\in\rn, t>0,s\in\rr.$$
 By Lemma \ref{LEM3.1} (iv), there exists $p_x\in \partial L\left(e\right)$
 such that $p_{t,x}\to p_x$ as  $t\to\fz$.
 Therefore, we obtain that
\begin{equation}\label{F3}
    u(x+se) - su(e)\le  p_{x}\cdot   x, \quad\forall x\in\rn,  s\in\rr.
 \end{equation}

Similarly, from $- S^-_{ t}u\big(  -(t-s)e\big)\le  k  $ for  $t>0$ and $s\in\rr$,
we can conclude that there exists $\hat p_x\in\partial L\left(e\right)$ such that
 \begin{equation}\label{F4}
    u(x+se) - su(  e)\ge  \hat p_{x}\cdot   x, \quad\forall x\in\rn,
     s\in\rr.
  \end{equation}

 If $\partial L\left(e\right)=\{p_0\}$ is a singleton, then we have
  $p_x=\hat p_x=p_0$ for all $x\in\rn$ so that \eqref{F3} and \eqref{F4}
  imply $ u\left(x\right)=p_0\cdot x$ as desired.

If $\partial L\left(e\right)$ is not a singleton, then by
Proposition \ref{LEMC3}, $\partial L\left(e\right)$ must be a line segment
$[a,b]$ in  a line $\ell\subset\rn$. Therefore, $p_{e}\in [a,b]$ and $\hat p_{e}\in [a,b]$.
Applying \eqref{F3} and \eqref{F4} with $x=e$ and $s=0$,  we have that
$$\hat p_{e}\cdot   {e} \le u\left(e\right)\le   p_{e}\cdot   {e}.$$
Thus there exists $\theta_0\in [0,1]$ such that $ {p}_\ast:=\theta p_e
+(1-\theta)\hat{p}_e\in [a,b]=\partial L(e)$, and
$$ {p}_\ast\cdot e=u(e).$$
This, together with \eqref{F2} and Lemma \ref{LEM3.1} (i), implies that
$$ k =L\left(e\right)-u\left(e\right)= L\left(e\right)- {p}_\ast\cdot { e}=
H\left( {p}_\ast\right).$$

Now we want to show that
$$u(x)= {p}_\ast\cdot x,\ \forall x\in\rn.$$

\begin{center}
\begin{tikzpicture}
\fill (0,0) circle (1.5pt);
\draw (0,0) node [right]{$O$};
\draw [->,>=latex, thick, color=blue](0,0)--(0,1.9);
\draw (0,1.9) node [right]{$z$};
\draw [->,>=latex,thick](0,0)--(-1.5,3);
\draw (-1.5,3) node [right] {$x$};
\draw [dotted,thick](0,1.9)--(-1.5,3);

\draw [dotted,thick](-1.3,0.94)--(-1.5,3);

\draw [thick](-3,2.25)--(0,0);
\draw [dotted, thick](0,0)--(2,-1.5);
\draw [ thick](2,-1.5)--(3,-2.25);
\draw (-3,2.25) node [above]{$\rr e$};
\draw [ thick, color=red] (-2,0.3)--(-0.5,-1);
\draw (-0.4,-1) node [right]{$\ell$};

\draw [->,>=latex, thick](0,0)--(-1.5,-0.13);
\draw (-1.5,-0.13) node [left]{$\hat p_z$};
\draw [->,>=latex, thick](0,0)--(-1.2,-0.39);
\draw (-1.1,-0.48) node [left]{$p_\ast$};
\draw [->,>=latex, thick](0,0)--(-0.7,-0.83);
\draw (-0.7,-0.83) node [left]{$p_z$};

\draw [dotted, thick](-2.60,1.5)--(5,1.5);
\draw (5,1.5)--(2.60,-1.5)--(-5,-1.5)--(-2.60,1.5);
\draw (0,-1.8) node [below] {${\rm Figure \ 1}$};
\end{tikzpicture}
\end{center}
First, observe that
\begin{equation}\label{pos}
\left(p_\ast-p \right)\cdot e \ne 0, \forall p \in \partial L\left(e\right)\setminus \{  p_\ast\}.
\end{equation}
For, otherwise, by Proposition \ref{LEMC3}, there exists
$p_1\in \partial L(e)\setminus\{p_\ast\}$ such that
$p_1\cdot e= p_\ast\cdot e$,  and hence
$$H\left(  p_\ast\right)=  p_\ast\cdot e- L\left(e\right)= p_1\cdot e-L\left(e\right) =H\left(p_1\right).$$
Hence $H$ is constant on the line segment $[p_\ast, p_1]$, which
is impossible.

From \eqref{pos}, we see that
\begin{equation}\label{F5}
\rn=\bigcup_{\{z\in\rn: z\perp \ell- p_0\ast\}} \Big(\{z\}+\rr e\Big).
\end{equation}
For any $x\in\rn$, there exist $s\in\rn$,
and $z\in\rn$ that is perpendicular to $\ell-{   p_\ast}$,
such that $x=z+se$. By \eqref{F3} and \eqref{F4},
there are $p_z, \hat p_z\in \partial L\left(e\right)\subset\ell$ such that
$$\hat p_z\cdot z\le u\left(z+s e\right)-u\left(s e \right)\le p_z\cdot z.$$
Since $p_z, \hat{p}_z\perp\ell\setminus\{p_\ast\}$, we have
$$\hat p_z\cdot z=   p_\ast  \cdot z= p_z\cdot z.$$
Hence
$$   p_\ast \cdot z\le u\left(z+s e\right)-u\left(s e \right)\le   p_\ast \cdot z.$$
This implies
$$u(x)=u \left(z+s e \right)=u\left(s e \right)+   p_\ast \cdot z
= s u(e)+  p_\ast\cdot z= s   p_\ast \cdot  e +   p_\ast \cdot z
=  p_\ast \cdot \left(z+s e \right)=  p_\ast\cdot x.
$$
This completes the proof. \end{proof}

With the help of Lemmas \ref{LEM3.1}, \ref{LEMC3} and \ref{LEMC5},
we are ready to prove Theorem \ref{LEMC4}.

 \begin{proof}[Proof of Theorem \ref{LEMC4}.]
 Without  loss of generality, assume $u\left(0\right)=0$.
 If $ k =0$, then by \eqref{F1} we have that $Su(x)=0$ for
 all $x\in\rn$ and hence $H(Du)\equiv 0$ on $\rn$. This
 implies that $Du\equiv 0$ and hence $u\equiv u(0)=0$.

 Assume $ k >0$ below. First, we have

 \medskip
\noindent {\it Claim A.  There exists $0\ne y^\pm\in\rn$ such that
 \begin{equation}\label{E6}
  u\left(  y  ^+ \right) - L\left(  y ^+  \right) =     k  \quad \mbox{and   }\quad u\left(s y ^+ \right) =s u\left(y^+ \right) >0  \quad\forall s>0,
 \end{equation}
\begin{equation}\label{E7}
  -u\left(  y  ^- \right) - L\left(  -y ^-  \right) =     k  \quad \mbox{and   }\quad u\left(s y ^- \right) =s u\left(y^- \right)< 0\quad\forall s>0,
 \end{equation}
  \begin{equation}\label{E8} L\left( \lz  y ^+  +\left(1-\lz\right)(-y^-)\right)=
  \lz L\left(y^+\right)  + \left(1-\lz\right) L\left(-y^-\right) 
    \quad\forall \lz\in(0,1)  \end{equation}
and  }
\begin{equation}\label{E10} u\left(\lz t y^++\left(1-\lz\right)s y^-\right)= \lz t u\left( y^+\right)+\left(1-\lz\right)s u\left(y^-\right)
\quad\forall \lz\in(0,1), \ t,s>0.\end{equation}

\begin{proof}[Proof of Claim A ]
 It follows from  $S^+_tu\left(0\right)= k =-S^-_tu\left(0\right)$ for all $t>0$, and Lemma \ref{LEM3.3} that there exist $R_k>0$ and
 $y^\pm_t\in \overline {B\left(0,R_k t\right)}$ such that
\begin{equation}\label{E1}
\frac{u\left(y_t^+\right)}t - L\big(\frac{y_t^+}t\big)
= k =-\frac{u\left(y_t^-\right)}t - L\big(\frac{-y_t^-}t\big),\quad \forall t>0,
\end{equation}
and hence
\begin{equation}\label{E2}
   \frac{u\left(y_t^+\right) }{ 2t}-  \frac{ u\left(y_t^-\right)}{ 2t} =   k  +\frac12
   \Big(L\big(\frac{y_t^+}t\big)  + L\big(\frac{-y_t^-}t\big)\Big),
  \quad \forall t>0.
    \end{equation}
Since $|\frac{y^\pm_t}{t}|\le R_k$, there exists $y^\pm\in \rn$
such that after passing to a subsequence, $\frac{y^\pm_t}{t}\to y^\pm$
as $t\to\fz$.

To show \eqref{E8},  observe that by
$S^+_{2t}u\left(y_t^-\right)\le  k  $ for all $t>0$, we have
$$
  \frac{u\left(y_t^+\right)-u\left(y_t^-\right)}{2t} - L\big(\frac{y_t^+-y_t^-}{2t}\big)\le  k  \quad \forall t>0. $$
This and \eqref{E2}   yield
$$
\frac12\Big(L\big(\frac{y_t^+}t\big)  +L\big(\frac{-y_t^-}t\big)\Big)
\le L\big(\frac{y_t^+-y_t^-}{2t}\big) \quad \forall t>0, $$
which, together with the convexity of $L$, yields that
$$
  \frac12\Big(L\big(\frac{y_t^+}t\big)  + L\big(\frac{-y_t^-}t\big)\Big)
  = L\big(\frac{y_t^+-y_t^-}{2t}\big), \quad \forall t>0. $$
Applying the convexity of $L$ again,  we see that
  $L$ must be  linear in $\big[-\frac{y^-_t}{t},\frac{y^+_t}{t}\big]$, that is,
   \begin{equation}\label{E3}
   L\big( \lz \frac{y_t^+}t+\left(1-\lz\right)\frac{-y_t^-}t \big)=
  \lz L\big(\frac{y_t^+}t\big)  + \left(1-\lz\right) L\big(\frac{-y_t^-}t\big),
  \quad \forall t>0,\ \lz\in\left(0,1\right).
  \end{equation}
This, after sending $t$ to $\fz$, implies
 \eqref{E8}.

To show \eqref{E6} and \eqref{E7},
observe that $S^+_{\left(\theta_2-\theta_1\right) t}u\left(\theta_1 y_t^+\right) \le  k  $ for $0\le \theta_1<\theta_2\le1$ and $t>0$. Hence
$$
\frac{u\left(\theta_2 y_t^+\right) -u\left(\theta_1 y_t^+\right)}{\left(\theta_2-\theta_1\right) t} - L\big(\frac{y_t^+}t\big)\le  k,  $$
  that is,
  $$u\left(\theta_2 y_t^+\right) -u\left(\theta_1 y_t^+\right)
  \le \Big( k + L\big(\frac{y_t^+}t\big)  \Big)\left(\theta_2 t-\theta_1t\right).$$
This, combined  with \eqref{E1}, yields that
  $$
 u\left(\theta_2 y_t^+\right)-u\left(\theta_1 y_t^+\right)=
 \Big(k + L\big(\frac{y_t^+}t\big) \Big)
\left(\theta_2 t-\theta_1t\right)\quad\forall t>0, \ 0\le \theta_1<\theta_2\le1. $$
 In particular, for all $0\le s\le t$, choosing $\theta_1=0$ and
 $\theta_2=\frac{s}{t}$ and applying \eqref{E1} again,  we obtain that
\begin{equation}\label{E4}
 \frac{1}{s}u\big( s \frac{y_t^+}{t}\big)- L\big( \frac{y_t^+}t \big)
 =     k  \quad
 \mbox{and  } \quad u\big(\frac{s y_t^+}t\big)
 =s u\big(\frac{  y_t^+}t\big).
 \end{equation}
Similarly, we also have that for all $0\le s\le t$,
\begin{equation}\label{E5}
 -\frac{1}{s}u\big(s \frac{y_t^-}{t}\big)- L\big( -\frac{y_t^-}t \big) =     k  \quad \mbox{and   }\quad u\big(\frac{s y_t^-}t\big) =s u\big(\frac{  y_t^-}t\big).
 \end{equation}
It is clear that \eqref{E6} and \eqref{E7}
follow from \eqref{E4} and \eqref{E5}
by sending $t$ to $\fz$.


Now we want to prove \eqref{E10}.
By $S^+_{t\lz} u\left(\left(1-\lz\right)sy^-\right)\le  k $ for all $t,s>0$ and $\lz\in(0, 1)$, we have that
 $$\frac{u\left(\lz ty^++\left(1-\lz\right)sy^-\right)-u\left(\left(1-\lz\right)sy^-\right)}{t\lz} -L\left(y^+\right)\le  k.$$
This, together with \eqref{E6} and \eqref{E7}, yields that
\begin{equation}\label{E9} u\left(\lz t y^++\left(1-\lz\right)s y^-\right)\le \lz t u\left( y^+\right)+ \left(1-\lz\right)s u\left(y^-\right)\quad\forall t,s>0,
\ \forall \lz\in (0,1).
\end{equation}
Similarly,
  by $-S^-_{\left(1-\lz\right)s}u\left(\lz t y^+\right)\le  k $ for $t,s>0$
  and $\lz\in (0,1)$, we have that
 $$-\frac{u\left(\lz t y^++\left(1-\lz\right)s y^-\right)-u\left(\lz t y^+\right)}{\left( 1-\lz\right)s}-L\left(-y^-\right)\le  k ,$$
which together with \eqref{E6} and \eqref{E7} yields again
\begin{equation}\label{E11x}
 u\left(\lz ty^++\left(1-\lz\right)s y^-\right)\ge \lz tu\left( y^+\right)+\left(1-\lz\right)su\left(y^-\right)\quad\forall t,s>0, \ \forall\lz\in (0,1).
 \end{equation}
It is clear that \eqref{E10} follows from \eqref{E9} and \eqref{E11x}.
Hence {\it Claim A} is proved.
\end{proof}

\medskip
Observe that by \eqref{E6}, \eqref{E7} and \eqref{E10},
there exists  $\lz_0\in\left(0,1\right)$ such that for any $s>0$,
 \begin{equation}\label{E11} u\left(\lz_0 sy^++\left(1-\lz_0\right)sy^-\right)=  s\big(\lz_0   u\left( y^+\right)+\left(1-\lz_0\right) u\left(y^-\right)\big)=0.
 \end{equation}
We proceed with two cases:
\begin{enumerate}
\item[{\it Case 1.}] $\lz_0 y^++\left(1-\lz_0\right)y^-=0$.
In this case, we have
  $-y^-=  s_0y^+$ and $u\left(y^-\right)= -s_0u\left(y^+\right)$,
  with $s_0=\frac{\lz_0}{1-\lz_0} $.  Hence
  $$u\left(-  y^+\right)= u\big(\frac{y^-}{s_0}\big)=\frac{1}{s_0}u\left(y^-\right)= - u\left( y^+\right).$$
  This, together with \eqref{E6}, yields
 \begin{equation}\label{E12}
  u\left(  y  ^+ \right) - L\left(  y ^+  \right) =     k  \quad \mbox{and   }\quad u\left(s y ^+ \right) =s u\left(y^+ \right),  \quad\forall s\in\rr.
 \end{equation}
Hence $u$ satisfies the condition \eqref{F1},  with $e=y^+$.
Applying Lemma \ref{LEMC5}, we conclude that
there is a vector $p_0\in \partial L(y^+)$ such that $H(p_0)=k$
and $u(x)=p_0\cdot x$ for all $x\in\rn$.

\smallskip
\item [{\it Case 2}.] $\lz_0 y^++\left(1-\lz_0\right)y^-\ne 0$ (see Figure 2 below).
In this case, $y^\pm$ does not lie in the same line.
For, otherwise, from $u(s\lz_0 y^++s\left(1-\lz_0\right)y^-)=0$ for $s\ge0$
we see that we know that $u\equiv  0$ either in $\rr_+y^+$ or $\rr^+y^-$
so that either $u(y^+)=0$ or $u(y^-)=0$, which contradicts to \eqref{E6}
or \eqref{E7}.  Set $x_s=  \lz_0 sy^+ +\left(1-\lz_0\right)s y^- $, and
 define a function $v_s$ by letting
 $$v_s\left(z\right)=u\left(x_s+z\right),\ z\in\rn.$$
  By \eqref{E11}, we have $v_s\left(0\right)=0$.
  By $u\in C^{0,1}(\rn)$ and the Arzela-Ascoli theorem,
  there exists $v\in C^{0,1}(\rn)$ such that that as $s\to\fz$,
  $v_s\rightarrow v$ locally uniformly in $\rn$.
\end{enumerate}

\begin{center}
\begin{tikzpicture}

\draw (0,0) node [left]{$O$};

\draw [->,>=latex, thick, color=blue](0,0)--(2,1.25);
\draw (2,1.25) node [right]{$y^+$};  

\draw [->,>=latex, thick, color=blue](0,0)--(0.25,-1.25);
\draw (0.25,-1.25) node [right]{$y^-$};

\draw [->,>=latex, thick ](0,0)--(0.875,1.25 );
\draw (0.875,1.25 ) node [right]{$y^0$};

\draw [    ](0.2,-1)--(1.6,1);
\draw (1.3,0.6) node [right]{$u$ is linear};

 \draw [thick, color=red](0,0)--(2.8,0);
 \draw (2.1,0) node [below]{$\{x_s,s>0\}$};
 \draw (0,-1.4) node [below] {${\rm Figure \ 2}$};
\end{tikzpicture}
\end{center}

Writing $y^0=\frac{y^+-y^-}2 $. Then we have the following claim:
\medskip

\noindent {\it Claim B.
There exists  $p_0\in \partial L(y^0)$ such that $H\left(p_0\right)= k $, and
\begin{equation} \label{E-3}
v\left(x\right)=p_0\cdot x,\ \forall x\in\rn.
\end{equation}}
\begin{proof}[Proof of Claim B] By Lemma \ref{LEMC5}, it suffices
to show that $v$ satisfies  both \eqref{F1} and \eqref{F2}, with $e=y^0$.
  To see this, first observe that by \eqref{E10} and \eqref{E11} we have
 \begin{align}\label{E15}
 u\left(x_s+\dz y^0\right)& =  \big(\lz_0 s+\frac\dz 2\big) u\left(y^+\right)
 +\big((1-\lz_0) s-\frac\dz 2\big) u\left(y^-\right)\\
 &
   = \lz_0 s  u\left(y^+\right)+ \left(1-\lz_0\right) s  u\left(y^-\right)+ \frac\dz 2
   \big(u\left(y^+\right)-u\left(y^-\right)\big)\nonumber\\
   &=  \frac\dz 2\big(u\left(y^+\right)-u\left(y^-\right)\big),
   \quad \forall s>0,\ -2\lz_0s<\dz< 2\left(1-\lz_0\right)s.\nonumber
   \end{align}
   Thus
\begin{equation}\label{E16}v\left( \dz y^0\right) = \lim_{s\to\fz}u\left(x_s+\dz y^0\right)=\frac\dz 2\big(u\left(y^+\right)-u\left(y^-\right)\big)
=\dz v(y^0),\quad \forall  \dz\in\rr.
\end{equation}
In particular, by \eqref{E8}
\begin{equation}\label{E17} v(y^0)=\frac1 2\big(u\left(y^+\right)-u\left(y^-\right)\big)=k+\frac12\big(L\left(y^+\right)+L\left(-y^-\right)\big)
=k+ L\big(\frac{y^+ -y^-}2\big)=k+L(y^0).
 \end{equation}
This implies that
\begin{equation}\label{E18} \pm S^\pm_tv\left(0\right)\ge \pm \frac {v\left( \pm ty_0\right)}t -L(y^0)=v (  y^0 )   -L(y^0)=   k, \quad\forall t>0.\end{equation}
One the other hand, for any $x\in\rn$ and $t>0$, it holds that
\begin{align}\label{E19}
\pm S^\pm_tv\left(x\right)&=\sup_{y\in\rn}\Big(
\pm \frac {v\left( \pm y +x\right)- v\left(   x\right)}t -L\big(  \frac y t \big)\Big) \\
&=\sup_{y\in\rn}\lim_{s\to\fz}
\Big(\pm \frac {u\left( \pm y +x+x_s\right)- u\left(   x+x_s\right)}t -L\big(  \frac y t \big)\Big)\nonumber\\
&\le \limsup_{s\to\fz}
\pm S^\pm_tu\left(x+x_s\right)\nonumber\\
&\le k.
\nonumber\end{align}
Combining \eqref{E16}, \eqref{E17}, \eqref{E18} and \eqref{E19}, we see  that $v$ satisfies \eqref{F1} and \eqref{F2}. Thus  \eqref{E-3}
follows from Lemma \ref{LEMC5}.
This proves {\it Claim B}.
\end{proof}

Next we  claim that

 \medskip
{\noindent \it Claim C. There exists vectors $p^\pm_x\in\partial L(\pm y^\pm)$ such that }
 \begin{equation}\label{E22} u\left(x-sy^-\right)+su\left( y^-\right)\le p^-_{ x}\cdot  x\quad\forall x\in\rn, s\in\rr \end{equation}
 and
     \begin{equation}\label{E23}u\left(x-sy^+\right)+su\left( y^+\right)\ge   p^+_{ x}\cdot  x\quad\forall x\in\rn, s\in\rr. \end{equation}

\begin{proof}[Proof of Claim C] Since
$S^+_{t}u\left( \left(t-s\right)y^-\right)\le  k $ for all $t>0$ and $s<t$, one has
       $$ \frac{u\left(x-sy^-\right)-u\left(\left(t-s\right)y^-\right)}t-L\Big(\frac{x-sy^--\left(t-s\right)y^-}{t}\Big)\le  k, $$
      which, together with \eqref{E7} and $t-s>0$, implies that
       $$ \frac{u\left(x-sy^-\right)+su\left( y^-\right)}t\le  k +u\left(y^-\right)+L\big(\frac{x-t y^-}{t}\big)=-L\left(y^-\right)+L\big(-y^-+\frac{x }{t}\big), $$
 so that
 $$ -[u\left(x-sy^-\right)+su\left( y^-\right)]\ge \frac1t\Big(L\left(y^-\right)
 -L\big(-y^-+\frac{x }{t}\big)\Big)\ge -p^-_{t,x}\cdot x,  $$
 for any $p^-_{t,x}\in\partial L(-y^-+\frac{x }{t})$. By Lemma \ref{LEM3.1} (iii),
 there exists $p^-_x\in\partial L\left(-y^-\right)$ such that
 $p^-_{t,x}\to p^-_x$ as $t \to\fz$. Thus \eqref{E22} follows.
Similarly, by $-S^-_{t}u\left( \left(t-s\right)y^+\right)\le  k $ for all $t>0$ and $s<t$,
we can prove \eqref{E23}. This proves {\it Claim C}.
\end{proof}


Finally we will prove Theorem \ref{LEMC4}.
Let $p_0\in\rn$ be given by \eqref{E-3}.  Then by Lemma \ref{LEM3.1} (iv) and \eqref{E8},
$p_0\in \partial L(y^0)$ implies that $p_0\in \partial L \left(\pm y^\pm\right)$.
Hence  by Lemma \ref{LEM3.1} (i), we have
\begin{equation}\label{E20}\pm p_0\cdot y^\pm =H\left(p_0\right)+L\left(\pm y^\pm\right)=  k +L\left(\pm y^\pm\right) =\pm u\left(y^\pm\right).
\end{equation}
Now we divide it into four sub cases.

\medskip

\noindent{\it \underline{Subcase a}. $\partial L\left(y^+\right)\cup \partial L\left(-y^-\right)=\big\{p_0\big\}$.} Then $ p^\pm_{ x}=p_0 $.
Applying \eqref{E22} and \eqref{E23} with $s=0$,
we have that
 $$p_0\cdot x= p^+_{ x}\cdot  x\le  u\left(x \right) \le p^-_{ x}\cdot  x= p_0\cdot x,\quad\forall x\in\rn, $$
hence $u\left(x\right)=p_0\cdot x$ for all $x\in\rn$.

\medskip
\noindent   {\it \underline{Subcase b}.   $\partial L\left(y^+\right) $ contains more than one point; while $ \partial L\left(-y^-\right)=\big\{p_0\big\}$.}
Thus we have that $p_x^-=p_0$. By \eqref{E20} with $s=0$, we then have
$$u\left(x\right)\le p_x^-\cdot x=p_0\cdot x,\quad\forall x\in\rn.$$

By Proposition \ref{LEMC3},  $\partial L\left(y^+\right)=[a,b]$ is a line segment contained in a line $\ell_{y^+}\subset\rn$. As in \eqref{pos} and \eqref{F5},
we know that $y^+$  is not perpendicular to $\ell-p_0$, and
$$\rn=\bigcup_{\{z\in\rn: \ z\perp \ell_{y^+}-p_0\}}\left(\{z\}+\rr y^+\right).$$

For any $z\in\rn$, that is perpendicular to
$\ell_{y^+}\setminus\big\{p_0\big\}$, we have
that $  p^+_x\cdot z=p_0\cdot z $. Hence by \eqref{E23} and \eqref{E20}
we have
$$ u\left(z-sy^+\right)\ge p_x^+\cdot z-su\left( y^+\right)=p_0\cdot  z-sp_0\cdot y^+=p_0(z-sy^+), \quad\forall z\in\rn,\ s\in\rr.$$
Hence we obtain that
$$u\left(x\right)\ge p_0\cdot x,\quad\forall x\in\rn,$$
and hence $ u\left(x\right) = p_0\cdot x$ holds for all $x\in\rn$.

      \medskip
     \noindent    {\it \underline{Subcase c}.  $\partial L\left(-y^-\right) $ contains more than one point; whil $ \partial L\left( y^+\right)$ consists of one point $p_0$.}
This case can be proved exactly in the same way as {\it Subcase b}.

         \medskip
     \noindent    {\it \underline{Subcase d}. Both $\partial L\left(y^+\right) $ and
     $\partial L\left(-y^-\right) $ contain more than one point.}
     By Proposition \ref{LEMC3},
     $\partial L\left(  y^+\right) $  is a line segment contained in the line, say $\ell_{y^+}$, and
      $\partial L\left(-y^-\right) $ is a line segment contained in the line, say $\ell_{y^-}$.
  By an argument similar to Subcase b, we know that
  $y^\pm$  is not perpendicular to $\ell_{y^\pm}-p_0$ and hence
  $$\rn=\bigcup_{\{z\in\rn:
  z\perp \ell_{y^+}- p_0 \}}
  \left(\{z\}+\rr y^+\right)
  =\bigcup_{\{z\in\rn: z\perp \ell_{y^-}- p_0 \}}
  \left(\{z\}+\rr y^-\right).$$
For any $z\in\rn$, with $z\perp \ell_{y^-}\setminus\{p_0\}$,
we have that $ p_0\cdot z=p_x^-\cdot z$.
Thus by \eqref{E22},
$$u\left(z-sy^-\right)+su\left( y^-\right)\le p_0\cdot  z,  \quad\forall    s\in\rr,$$
which, together with  \eqref{E20}, gives
$$u\left(z-sy^-\right) \le p_0\cdot  \left(z-sy^-\right), \quad\forall   s\in\rr.$$
This implies that
$$ u\left(x\right) \le p_0\cdot   x, \ \forall x\in\rn. $$
Similarly,  we can show
$$ u\left(x\right) \ge p_0\cdot   x,  \forall x\in\rn.$$
Hence $ u\left(x\right) = p_0\cdot   x $ for all $x\in\rn$.
The proof of Lemma \ref{LEMC4} is now complete.
\end{proof}


With Theorem \ref{LEMC4} and Lemma \ref{LEM3.4},
we are ready to prove Theorem \ref{lla}: ({\bf A}) $\Rightarrow$({\bf B}).

\begin{proof}[Proof of ({\bf A}) $\Rightarrow$ ({\bf B}) in Theorem \ref{lla}.]
As explained in the introduction,
we may assume $H$ satisfies the assumption  \eqref{assh0}.
Let $u\in AM_H\left(\Omega\right)$ and $x^0 \in \Omega$.
For simplicity, assume $x^0=0$ and $u\left(0\right)=0$.
For a small $\dz_0>0$,  let $U=B\left(0,\dz_0\right)\Subset\Omega$
 and  $K:=\|u\|_{C^{0,1}\left(U\right)}<\fz$. Then $u\in AM_H\left(U\right)$
 is bounded.
  For any $0<r<1$, set $u_r= \frac1r u\left(rx \right) $ for $x\in \frac1rU$.
  Then $u_r\in AM_H\left(\frac1rU\right)$ and
  $\|u_r\|_{C^{0,1}\left( {\frac{1}{r} U)}\right)}=K$.
After passing to a subsequence, we may assume that
there exists $v\in C^{0,1}(\rr^n)$ such that
$u_r \to v $ locally uniformly in $\rr^n$ so that
$$\|v\|_{C^{0,1}(\rr^n)}\le \|u\|_{C^{0,1}\left(\frac{1}{r}U)\right)}=K.$$
It suffice to show that $v$ is a linear function and
$H\left(Dv\right)=S u\left(0\right)$.
To achieve this, we will verify that
$v$ satisfies all the assumptions in Theorem \ref{LEMC4},
with $k=S u\left(0\right)$.

For this purpose, let $R_K>0$ be given by Lemma \ref{LEM3.3}.
From Lemma \ref{LEM3.3}, it holds that for any $x\in\rr^2$,
     \begin{align*}   S^+_tv\left(x\right)&=\sup_{|y-x|\le R_Kt}\frac1t\Big(v\left(y\right)-v\left(x\right)-tL\big(\frac{ y-x}t\big)\Big) \\
     &=\sup_{|y-x|\le R_Kt}\lim_{r\to0}\frac1t\left[\frac{u\left(ry\right)-u\left(rx\right)}r-tL\big(\frac{ y-x}t\big)\right] \\
     &\le\liminf_{r\to0}\sup_{|rx-y|\le R_Ktr}\frac1{tr}\left[ u\left( y\right)-u\left( rx\right) -trL\big(\frac{ y-rx}{ tr }\big)\right]\\
     &\le \liminf_{r\to0} S^+_{tr}u\left(rx\right).
                             \end{align*}
This, combined with Lemma \ref{LEM7.13}, implies that
\begin{equation}
\label{E26}  S^+_tv\left(x\right)\le
\liminf_{r\to0} S^+_{tr}u\left(rx\right)
\le\liminf_{\dz\to0}\liminf_{r\to0} S^+_{\dz}u\left(rx\right)
=  \liminf_{\dz\to0}S^+_\dz u\left(0\right)
= Su\left(0\right)\quad\forall x\in\rn.
                              \end{equation}
                              Similarly, we have  that
                              \begin{equation}\label{E27}
                               -S^-_tv\left(x\right)\le  -S^-u\left(0\right) =
                              Su\left(0\right), \quad\forall x\in\rn.
                              \end{equation}
One the other hand, by Lemma \ref{LEM3.4}
for any $0<r<\frac{\dz_0}{R_Kt}$, there exists
      $z_r\in \overline {B\left(0,R_Ktr\right)}$ such that
     $$ \frac{u\left(z_r\right) }{tr}-tr L\big(\frac{z_r }{tr }\big)
     =S^+_{tr}u\left(0\right).$$

     For any $\ez>0$, there exists $r_{\ez,t}>0$ such that
     for any $r\in\left(0,r_{\ez,t}\right)$,
     $$v\left(y\right)\ge u_r\left(y\right)-\ez,\ \forall y\in \overline {B\left(0,R_Kt\right)}.$$
 Since $\frac{z_r}{r}\in \overline{B\left(0,R_Kt \right)}$, we obtain that
 for any $r\in\left(0,r_{\ez,t}\right)$,
$$S^+_tv\left(0\right)\ge \frac1t\Big[v\big(\frac{z_r}{r}\big)
-tL\big(\frac {z_r} {tr}\big)\Big]
\ge\frac1{tr}\Big[ u(z_r)  -tr L\big(\frac{   y }{t  }\big)\Big]
-\frac{\ez}t=  S^+_{tr}u\left(0\right)-\frac{\ez}t\ge Su\left(0\right)-\frac{\ez}t.$$
Sending $\ez$ to $0$, this implies that
 $$S^+_tv\left(0\right) \ge S u\left(0\right) .$$
 Similarly,  we have
$$ -S^-_tv\left(0\right) \ge  S u\left(0\right).  $$
Combining these two inequalities and \eqref{E26} and\eqref{E27},
we see that the assumptions of Theorem \ref{LEMC4} are satisfied,
with $k=Su\left(0\right)$. Hence  the conclusion ({\bf A})$\Rightarrow$
({\bf B})
follows from Theorem \ref{LEMC4}.
\end{proof}

We end this section with the following interesting Corollary. A function $v\in C^0(\rn)$ has the linear approximation property at the infinity if
for any sequence $\{R_j\}$ which converges to $\fz$, we can find a subsequence $\{ R_{j_k}\}_{k\in\nn}$
and a vector $e\in \rn$
such that
\begin{equation}\label{LAPfz}\lim_{k\to\fz}
\sup_{y\in B\left(0,1\right)}\big|\frac{u ( R_{j_k}y ) }{R_{j_k}} -  e\cdot y \big|=0
\end{equation}
and $H(e)=\|H(Dv)\|_{L^\fz(\rn)}$.

 \begin{cor}\label{lla-cor}  Let $n\ge 2$ and  assume that
$H\in C^0(\rn)$ is  convex  and coercive, and satisfies ({\bf A}) of Theorem 1.1. 
  Then the following hold: 
 \begin{enumerate}
\item[({\bf B}-1) ]   if $u\in AM_H(\rn)$ and   $Su(x_0)=\|H(Du)\|_{L^\fz(\rn)}<\fz$ for some $x_0\in\rn$, then $u$ is a linear function, with $H(Du(x_0))=\|H(Du)\|_{L^\fz(\rn)}$.

\item[({\bf B}-2) ]   if $u\in AM_H(\rn)$  has a linear growth at the infinity,
then $u$ has the linear approximation property at the infinity.
        \end{enumerate}
          \end{cor}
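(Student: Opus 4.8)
Throughout we assume, as in Section 1.1, that $H$ satisfies \eqref{assh0}. The plan is to derive ({\bf B}-1) directly from Theorem \ref{LEMC4}, and then to obtain ({\bf B}-2) by a blow-down argument that produces a linear function to which ({\bf B}-1) applies; the real content is a single ``the maximal slope propagates to infinity'' lemma.

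For ({\bf B}-1) I would take $x_0=0$, write $k:=\|H(Du)\|_{L^\fz(\rn)}=Su(0)$, observe that coercivity of $H$ and $k<\fz$ force $u\in C^{0,1}(\rn)$, and then verify the two hypotheses of Theorem \ref{LEMC4}. The upper bound $\max\{S^+_tu(x),-S^-_tu(x)\}\le k$ for all $x,t$ is routine: Lemma \ref{LEM7.11} turns $H(Du)\le k$ a.e. into $u(x)-u(y)\le C^H_k(x-y)$ on $\rn$, and the Fenchel inequality $p\cdot z\le tH(p)+tL(z/t)$ gives $C^H_k(z)\le tk+tL(z/t)$ for all $z$, whence $T^tu(x)\le u(x)+tk$ and $T_tu(x)\ge u(x)-tk$. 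For the equality $S^+_tu(0)=-S^-_tu(0)=k$ I would use the monotonicity of $t\mapsto\pm S^\pm_tu(0)$ recalled after Lemma \ref{LEM7.13}: it holds on $(0,\delta_r]$ for the bounded absolute minimizer $u|_{B(0,R)}$, and the localization of Lemma \ref{LEM3.3} lets the admissible range exhaust $(0,\fz)$ as $R\to\fz$, so $S^+_tu(0)\ge S^+u(0)=Su(0)=k$ for every $t$, forcing equality. Theorem \ref{LEMC4} then yields $p_0$ with $H(p_0)=k$ and $u(x)=u(0)+p_0\cdot x$, so $H(Du(0))=k=\|H(Du)\|_{L^\fz(\rn)}$.

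For ({\bf B}-2), Corollary \ref{llg} gives $k:=\|H(Du)\|_{L^\fz(\rn)}<\fz$ (if $k=0$ then $Du\equiv0$ by Corollary \ref{propH} and the assertion is trivial, so assume $k>0$). Given $R_j\to\fz$, I would set $u_j(x):=\frac1{R_j}\big(u(R_jx)-u(0)\big)\in AM_H(\rn)$ by scale invariance; this is a uniformly Lipschitz family vanishing at $0$, so a subsequence converges locally uniformly to some $v\in C^{0,1}(\rn)$ with $v(0)=0$ and, by Lemma \ref{LEMconverges}, $v\in CC_H(\rn)$, and passing to the limit in $u_j(x)-u_j(y)\le C^H_k(x-y)$ gives $\|H(Dv)\|_{L^\fz(\rn)}\le k$. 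The key point is the scaling identity
$$\widehat S^+_tu_j(0)=\widehat S^+_{tR_j}u(0),\qquad t>0,$$
immediate from the $1$-homogeneity of $C^H_\kappa$ after the substitution $z=R_jy$. Using the continuity of $\widehat S^+_t$ under uniform convergence on $\partial B(0,t)$ (a consequence of continuity and strict monotonicity in $\kappa$ of $C^H_\kappa$), together with the Key Lemma below giving $\widehat S^+_su(0)\to k$ as $s\to\fz$, one gets $\widehat S^+_tv(0)=k$ for all $t>0$; Lemma \ref{LEMcone} then gives $Sv(0)=\widehat Sv(0)=\lim_{t\to0}\widehat S^+_tv(0)=k$. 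Hence $k\le Sv(0)\le\|H(Dv)\|_{L^\fz(\rn)}\le k$, so ({\bf B}-1) applies to $v$ and shows $v(x)=e\cdot x$ with $H(e)=k$. Since $u_j\to v$ uniformly on $\overline{B(0,1)}$ and $u(0)/R_j\to0$, this is precisely \eqref{LAPfz} with $H(e)=\|H(Du)\|_{L^\fz(\rn)}$, proving ({\bf B}-2).

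The step I expect to be hardest is the \textbf{Key Lemma}: \emph{if $u\in CC_H(\rn)$ and $\|H(Du)\|_{L^\fz(\rn)}=k<\fz$, then $\lim_{s\to\fz}\widehat S^+_su(0)=k$}. Since $s\mapsto\widehat S^+_su(0)$ and $s\mapsto-\widehat S^-_su(0)$ are monotone increasing (Lemma \ref{LEMcone}) and bounded by $k$, their limits $k_\fz,\ell_\fz\in[0,k]$ exist and in the limit $u(y)-u(0)\le C^H_{k_\fz}(y)$ and $u(0)-u(y)\le C^H_{\ell_\fz}(y)$ for all $y$. Suppose $k_\fz<k$; since $\sup_\rn Su=\|H(Du)\|_{L^\fz(\rn)}=k$, choose $x_0$ with $Su(x_0)>k_\fz\ge0$, so $Su(x_0)>0$. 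Iterating Lemma \ref{LEMconeincreasing} with a fixed radius $\rho>0$ produces $x_0,x_1,x_2,\dots$ with $|x_n-x_{n-1}|=\rho$, $Su(x_n)\ge Su(x_0)$, and $u(x_n)-u(x_{n-1})\ge C^H_{Su(x_0)}(x_n-x_{n-1})$. Condition ({\bf A}) makes $t\mapsto H(tp)$ strictly increasing, hence $\kappa\mapsto C^H_\kappa(w)$ strictly increasing for $w\ne0$, so by $1$-homogeneity and compactness $\gamma:=\min_{|e|=1}\big(C^H_{Su(x_0)}(e)-C^H_{k_\fz}(e)\big)>0$. Telescoping and using subadditivity of $C^H_{k_\fz}$ yields $u(x_N)-u(x_0)\ge C^H_{k_\fz}(x_N-x_0)+N\gamma\rho$, whereas $u(x_N)\le u(0)+C^H_{k_\fz}(x_N)$ and $u(x_0)\ge u(0)-C^H_{\ell_\fz}(x_0)$ give (again by subadditivity) $u(x_N)-u(x_0)\le C^H_{k_\fz}(x_N-x_0)+C^H_{k_\fz}(x_0)+C^H_{\ell_\fz}(x_0)$; comparing forces $N\gamma\rho$ to stay bounded, a contradiction, so $k_\fz=k$. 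The subtlety is exactly this comparison: the per-step gain $\gamma\rho$ — which is where condition ({\bf A}) enters, through the \emph{strict} monotonicity of $\kappa\mapsto C^H_\kappa$ — must survive the unknown zig-zagging of the discrete flow $(x_n)$, and the way to protect it is to estimate $u(x_N)-u(x_0)$ from above against the one-sided cone bounds at the origin rather than against the crude linear-growth bound.
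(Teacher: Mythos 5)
Your proposal is correct, and for ({\bf B}-1) it coincides with the paper's argument: both verify the hypotheses \eqref{F1} of Theorem \ref{LEMC4} (upper bound from Lemma \ref{LEM7.11} plus the Fenchel inequality, equality at $x_0$ from the time-monotonicity of $\pm S^\pm_t$ together with the localization Lemma \ref{LEM3.3}) and then quote Theorem \ref{LEMC4}. For ({\bf B}-2) your skeleton (blow-down $u_{R_j}\to v$, then apply ({\bf B}-1) to $v$) is also the paper's, but you add a genuine ingredient that the paper only gestures at: the paper asserts ``similar to the proof of Theorem \ref{lla} ({\bf A})$\Rightarrow$({\bf B}), $Sv(0)=\|H(Dv)\|_{L^\fz(\rn)}\le k$'', whereas a literal imitation of the blow-up proof only gives $S^\pm_t v(x)\le k$ everywhere and $\pm S^\pm_tv(0)\ge\lim_{s\to\fz}\pm S^\pm_su(0)$, since the base points $R_jx$ escape to infinity; what is really needed is your Key Lemma that $\widehat S^+_su(0)\to\|H(Du)\|_{L^\fz(\rn)}$ as $s\to\fz$, which you prove by iterating Lemma \ref{LEMconeincreasing} and exploiting the strict monotonicity of $\kappa\mapsto C^H_\kappa$, and which in addition delivers $H(e)=\|H(Du)\|_{L^\fz(\rn)}$, the part of the definition \eqref{LAPfz} the paper's two-line proof does not address. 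So your route is the same decomposition but with the crucial quantitative step made explicit and proved; this buys a complete verification of the hypothesis of ({\bf B}-1) for the blow-down limit and the full strength of the stated conclusion. Three small remarks: the strict monotonicity $C^H_{\kappa'}(w)>C^H_\kappa(w)$ for $\kappa'>\kappa$ holds for any $H$ satisfying \eqref{assh0} (perturb a maximizer $p^*$ in the direction $w$ and use continuity of $H$), so condition ({\bf A}) is not actually where $\gamma>0$ comes from; before invoking ({\bf B}-1) for $v$ you should note that $v\in CC_H(\rn)$ gives $v\in AM_H(\rn)$ by applying Lemma \ref{LEM7.13} on bounded subdomains (the paper glosses this too); and Lemmas \ref{LEMcone}--\ref{LEMconeincreasing} are stated for $U\Subset\rn$, so in the Key Lemma they should be applied on large balls exhausting $\rn$, which their proofs permit verbatim.
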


          \begin{proof}

          \medskip
          \noindent Assume that $u\in AM_H(\rn)$ and
          $Su(x_0)=\|H(Du)\|_{L^\infty(\rn)} <\fz$.
          For simplicity, assume that $x_0=0$ and $u(x_0)=0$.
          Then   $Su( 0) =\|H(Du)\|_{L^\fz(\rn)}:=k<\fz$. Since $u\in AM_H(\rn)$,
          it then follows from Lemmas \ref{LEM3.3} and \ref{LEM7.13} that
          the assumptions \eqref{F1} of Theorem \ref{LEMC4} are fulfilled.
          Hence we conclude that there exists $p_0\in\rn $, with $H(p_0)=k$,
          such that $u(x)=p_0\cdot x, \forall x\in\rn$. This proves ({\bf B}-1).

          To see ({\bf B}-2), assume that $u\in AM_H(\rn)$ has a linear growth at the infinity.
          Then $\|H(Du)\|_{L^\fz(\rn)}:=k<\fz$.
          For any sequence $R_{j}\to\fz$, there is
          function $v\in AM_H(\rn)$ such that after passing to a subsequence,          $u_{R_{j} } \to v $ locally uniformly in $\rn$. Similar to the proof of Theorem \ref{lla} (i)$\Rightarrow$(ii), we have
$Sv(0)=\|H(Dv)\|_{L^\fz(\rn)}\le k$.  By ({\bf B}-1), we know that $v$ is a linear
function.  \end{proof}

By the examples given in Section 4, we see that condition {\bf A} is also optimal (necessary in some sense) to get the properties
({\bf B}-1) and ({\bf B}-2).

\section{Proofs of Theorem \ref{xprop} and Theorem \ref{lla0}: Part I}

Now we will start to apply the linear approximation Theorem \ref{lla}
to deduce the $C^1$-regularity of absolute minimizers in dimension two.
This section is devoted to the proof of following result, which plays a crucial
role in the proof of Theorem \ref{lla0}. Here we follow
the argument by \cite{wy} by making all necessary technical
modifications.
\begin{thm}\label{xprop} For $n=2$,
assume $H$ satisfies both \eqref{assh0} and ({\bf A}) of Theorem \ref{lla}.
For each  $\ez >0$ and each vector  $e_8\in H^{-1}([1,2])  $
there exist  $ \dz_\ast(H,\ez,e_8 ) >0$  such that for any $0<\dz<\dz_\ast$,
if $u \in AM_H(B(0,8))$, then we have
 \begin{equation}\label{xlin3x}
 \max_{e\in \mathscr D u(0)}| e_8 -e |\le \ez\quad
 \mbox{whenever
  $e_8 \in \mathscr D(u )(0;8;\frac{\dz}8)$.}
\end{equation}
\end{thm}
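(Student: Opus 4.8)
The plan is to adapt the scheme of Savin \cite{s05} and Wang-Yu \cite[Proposition 4.1]{wy}, replacing every appeal to $C^2$-uniform convexity of $H$ --- and the consequent $C^1$-regularity of $L$ and of the cone functions $C^H_k$ --- by the softer quantitative information assembled in Section 3: the geometric classification of $\partial L(q)$ in Proposition \ref{LEMC3}, the cone-function identity $C^H_{H(p)}(q)=\la p,q\ra$ for $q\in\partial H(p)$ and the enlargement estimate $C^H_k(x)+\dz|x|\le C^H_{k+C_R\dz}(x)$ (Lemma \ref{la-mu}, Corollaries \ref{xew4} and \ref{com-linear}), and the two analytic reformulations of condition ({\bf A}) that encode a (non-quantitative) modulus of continuity of $\partial H$ through inner products and angles (Propositions \ref{e-e6} and \ref{LEM4.5}). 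All of this, together with Savin's planar topology, is available only because $n=2$.

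First I would fix $e_8\in H^{-1}([1,2])$, $\ez>0$ and $u\in AM_H(B(0,8))$ with $e_8\in\mathscr D(u)(0;8;\frac{\dz}8)$, i.e.\ $|u(x)-u(0)-e_8\cdot x|\le\dz$ on $B(0,8)$. By Theorem \ref{lla} (the implication ({\bf A})$\Rightarrow$({\bf B}) already proved in Section 5), $u$ enjoys the linear approximation property at $0$, so $\mathscr Du(0)$ is a nonempty compact subset of the level set $H^{-1}(Su(0))$ and $H(e)=Su(0)$ for every $e\in\mathscr Du(0)$; in particular the $\max$ in \eqref{xlin3x} is attained. Applying Lemma \ref{LEMest} on $B(0,8)$ at scale $\frac{\dz}8$ gives $Su(0)\le H(e_8)+C\dz$, hence $H(e)\le H(e_8)+C\dz$ for all $e\in\mathscr Du(0)$. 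The remaining work is a matching lower bound together with the upgrade from closeness of $H$-values to closeness of vectors.

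For the lower bound I would run Savin's planar-topology argument in the spirit of \cite[Lemma 4.2]{wy}: from the blow-up $u_r\to e\cdot x$ at $0$ one extracts the auxiliary vector $e$, and then --- using Lemma \ref{LEMconeincreasing} to choose successive contact points of $u$ with cones of $H$, Lemma \ref{la-mu}(ii) to identify the extremal cone direction with a subgradient direction of $H$, and Corollaries \ref{xew4} and \ref{com-linear} to control slopes --- one builds a \emph{discrete steepest-ascent chain} $0=x_0,x_1,\dots$ along which $Su$ is non-decreasing. Because $u$ is squeezed between $u(0)+e_8\cdot x\pm\dz$ on all of $B(0,8)$, this chain cannot turn much and must travel a macroscopic distance inside $B(0,8)$; the resulting length/turning estimate forces $H(e_8)\le H(e)+\eta(\ez)$ once $\dz$ is small, with $\eta(\ez)\to0$ as $\ez\to0$. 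The symmetric comparison-with-cones-from-below version of the argument produces a companion vector $e_{0,8}$ with $H(e)\le H(e_{0,8})+C\dz$, and one arrives at $|H(e_8)-H(e)|\le\eta(\ez)$ and $|H(e_{0,8})-H(e)|\le\eta(\ez)$.

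The heart of the proof --- and the step I expect to be the main obstacle --- is passing from $|H(e_8)-H(e)|\le\eta(\ez)$ to $|e_8-e|\le\ez$, since the level set $H^{-1}(Su(0))$ need not be strictly convex at $e$ (equivalently, $L$ need not be $C^1$). Here I would establish an angle estimate: the chord $e_8-e$ (resp.\ $e_{0,8}-e$) makes an angle within some $\eta'(\ez)\to0$ of $\frac{\pi}2$ with some $q\in\partial H(p)$ for which $|p-e|<\eta(\ez)$ and $H(p)=H(e)$; this I would prove by comparing the direction of the discrete chain with the geometry of $\partial L$ from Proposition \ref{LEMC3} and invoking the strict positivity of $\phi_R(\eta)$ from Proposition \ref{LEM4.5} via planar topology. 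Feeding the two conditions ``$H(p+(e_8-e))-H(p)$ small'' and ``$\measuredangle(q,e_8-e)$ near $\frac{\pi}2$'' into Proposition \ref{e-e6}, with $\eta(\ez)$ (and hence $\dz_\ast$) chosen small enough relative to $\psi_R(\frac{\ez}2)$, forces $|e_8-e|\le\frac{\ez}2$, and likewise $|e_{0,8}-e|\le\frac{\ez}2$; in particular $|e_8-e|\le\ez$ for every $e\in\mathscr Du(0)$, which is \eqref{xlin3x}. The delicate bookkeeping is that every smallness generated along the way ($\eta(\ez)$, the turning of the chain, the length estimate) must in the end be absorbable by the positive but otherwise uncontrolled quantities $\psi_R(\cdot)$ and $\phi_R(\cdot)$ coming from ({\bf A}); without the angle estimate one recovers only $|H(e_8)-H(e_{0,8})|\le\ez$, i.e.\ continuity of $Su$ rather than of $Du$.
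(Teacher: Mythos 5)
Your plan reproduces the paper's own proof of Theorem \ref{xprop} in its main case: the upper bound $Su(0)\le H(e_8)+C\dz$ via Lemma \ref{LEMest}, the auxiliary slope produced by Savin's planar topology (Lemma \ref{xcon-com}), the discrete-gradient-flow length estimate $H(e_8)\le H(e)+\eta(\ez)$ (Lemma \ref{xlength}), the two angle estimates (Lemmas \ref{xangle} and \ref{xangler}), and the final conversion of ``$H$-values close plus angle near $\frac{\pi}2$'' into $|e_8-e|\le\frac\ez2$ and $|e_{0,8}-e|\le\frac\ez2$ through Propositions \ref{LEM4.5} and \ref{e-e6} are exactly the ingredients the paper uses, in the same order.

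There is, however, one genuine gap, together with a related misstatement. The auxiliary vector $e$ is not extracted ``from the blow-up $u_r\to e\cdot x$ at $0$'': in the paper it is the slope given by the linear approximation property at a touching point $z_3$ inside a small ball $B(0,r)$ where $u$ fails to be linear; the entire construction of the strip $\mathscr S$, the connected component $U$, and the polygonal line $\Gamma$ rests on the touching configuration \eqref{xcase1}/\eqref{xcase2}, which exists only because $u$ is nonlinear in every neighborhood of $0$ (the blow-up at $0$ instead supplies $e_{0,8}\in\mathscr Du(0)$, against which $H(e)\le H(e_{0,8})+C\dz$ follows from Lemma \ref{LEMest} at scale $8r$). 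Consequently your argument, as written, does not cover the case in which $u$ is linear on some ball $B(0,r_0)$: if $r_0>\frac18$ the estimate is immediate from $e_8\in\mathscr D(u)(0;8;\frac\dz8)$, but if $r_0<\frac18$ one must pass to a point $x_0\in\partial B(0,r_0)$ on the boundary of the maximal linearity ball, verify that $u$ is nonlinear in every neighborhood of $x_0$ and differentiable there with $Du(x_0)=e_{0,8}$, and then rescale so that the nonlinear case applies at $x_0$ (the paper's Case 2). That step is elementary but not automatic, and without it the theorem is established only for minimizers that are nonlinear near the origin.
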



\begin{proof}[Proof of Theorem \ref{xprop}]
For any $\dz>0$, let $ u \in AM_H(B(0,8))$,
$e_8\in\mathscr D(u)(0; 8;\frac{\dz}8)\cap H^{-1}([1,2])$,
and $e_{0,8}\in\mathscr D(u)(0)$.
We divide the proof into two cases:

\medskip
\noindent  {\it Case 1.  $u $ is nonlinear in any neighborhood of $0$.}
By $e_8\in\mathscr D(u)(0; 8;\frac{\dz}8)$ and  Lemma \ref{LEMest},
we have that
$$Su(x)\le H(e_8)+C_0\dz,\quad\forall x\in B(0,6),$$
and hence
\begin{equation}\label{w1}
\mbox{$H(e_{0,8})=Su(0)\le H(e_8)+C_0\dz\le 4$.    }
\end{equation}
provided $\dz<\frac{1}{C_0}$.

Set $$R:=1+ \max\big\{|p|: H(p)\le 4\big\}.$$
Let $\psi_{2R} $ be the function given by Proposition \ref{e-e6}.
Without loss of generality, assume that $\psi_{2R}(\ez)\le  \frac{\ez}4$.
Note that $e_{0,8},e_8\in \overline {B(0,R)}$.
If  $|e_{0,8}-e_8|\le \frac12\psi_{2R}(\frac{\ez}2) $,
then  we have $|e_{0,8}-e_8|\le \ez $ as desired.
Hence we assume that
$$|e_{0,8}-e_8|\ge \frac12\psi_{2R}(\frac{\ez}2).$$

Note that $e_{0,8}\in \mathscr D(u)(0)$ implies there exists $0<r<1/8$
such that $e_{0,8}\in \mathscr D(u)(0; 8r;\frac{\dz}8)$.
From the assumption that $u $ is nonlinear in $B(0,r )$,
there are a line segment $[z_1,z_2]\subset B(0,r)$
and a linear function $l(x)=a_0\cdot x+b_0, x\in [z_1,z_2]$,
with $a_0=[u(z_2)-u(z_1)]\frac{z_2-z_1}{|z_2-z_1|^2}$, and $z_3\in (z_1,z_2)$
such that either
 \begin{equation}\label{xcase1}
u\ge l\quad {\rm on}\ [z_1,z_2],\quad u(z_1)>l(z_1),\quad u(z_3)=l(z_3),\quad
u(z_2)>l(z_2);
\end{equation}
or
 \begin{equation}\label{xcase2}
u\le l\quad {\rm on}\ [z_1,z_2],\quad u(z_1)<l(z_1),\quad u(z_3)=l(z_3),\quad
u(z_2)<l(z_2).
\end{equation}
Since the case \eqref{xcase2} can be done similarly,
for simplicity we only consider \eqref{xcase1}.

Applying the linear approximation property \eqref{LAP} and \eqref{slope},
Lemma \ref{com} on comparison with linear functions,
and Corollary \ref{com-linear}, we can deduce the following result by
adapting Savin's topological argument in \cite{s05}, whose proof will
be given in Section 6.1 below.

\begin{lem}\label{xcon-com}
  There exists $e\in\mathscr D(u)(z_3)$
  such that $z_1$ and $z_2$ belong two distinct connected components
of the set $\{y\in \rr^2: u(y)>u(z_3)+e \cdot (y-z_3)\}\cap B(0,6)$.
\end{lem}

Since $e_{0,8}\in\mathscr D(u )(0;8r;\frac{\dz}8)$ and
  $B(z_3,2r )\subset B(0,6r)$, it follows from Lemma \ref{LEMest}
  and Lemma \ref{xcon-com} that
\begin{equation}\label{w2}H(e )=Su(z_3)\le H(e_{0,8})+C_0\dz.\end{equation}

The next Lemma gives a lower bound of $H(e)$
for $e$ given by Lemma \ref{xcon-com}, whose proof
will be given in Section 6.1.

 \begin{lem}\label{xlength}
 For every $\ez\in(0,1)$,
  there exists   $0< \dz_0(H,e_8,\ez )<\frac1{8C_0}\psi_{2R}\left(\frac\ez2\right) $ such that   for any $\dz\in(0,\dz_0)$,  we have
\begin{equation}\label{xeq4.3}
  H(e_8)-H(e )\le  \frac14 \psi_{2R}\left(\frac\ez2\right).
\end{equation}
\end{lem}

Hence if $\delta_0<\frac{1}{6C_0}\psi_{2R}(\frac{\ez}2)$ and $0<\dz<\dz_0$,
then by \eqref{w1}, \eqref{w2}, and Lemma \ref{xlength} that
$$\begin{cases}
|H(e_8)-H(e )| \le \frac14\psi_{2R}\left(\frac\ez2\right)+ 2C_0\dz<\ez,\\
|H(e_{0,8})-H(e )|\le \frac14\psi_{2R}\left(\frac\ez2\right)+2C_0\dz<\ez.
\end{cases}
$$
In particular, we have that $\frac12\le H(e_{0,8})\le 4$.

Next we need the following angle estimates; whose proofs
will also be given in Section 6.1.

\begin{lem}\label{xangle}   For every $\ez>0$ ,
 there exists   $0< \dz_1=\dz_1(H,e_8,\ez )<\dz_0 $ such that   for any $\dz\in(0,\dz_1)$, there exist $e'\in \overline {B\left(e,\psi_{2R}\left(\frac\ez2\right)\right)}$, with $H(e' )=H(e)$ and $q\in \partial H(e')$, such that
     \begin{equation}\label{xeq4.28}
\left|\measuredangle (q,e_8-e )-\frac{\pi}{2}\right|\le \psi_{2R}\left(\frac\ez2\right).
\end{equation}
\end{lem}

\begin{lem}\label{xangler}
 For every $\ez>0$,
  there exists   $ 0<\dz_2=\dz_2(H,e_8,\ez )<\dz_1 $ such that   for any $\dz\in(0,\dz_2)$, there exist $e'\in \overline {B\left(e,\psi_{2R}\left(\frac\ez2\right)\right)}$, with $H(e')=H(e)$ and  $q\in\partial H(e')$, such that
 \begin{equation}\label{xeq4.28r}
\left|\measuredangle (q,e_{0,8}-e)-\frac{\pi}{2}\right|\le \psi_{2R}\left(\frac\ez2\right).
\end{equation}
\end{lem}

Now we can prove \eqref{xlin3x}. Indeed,
 for every $\ez>0$ and $0<\dz<\min\{\dz_1, \dz_2\}$,
 Lemma \ref{xlength} and Lemma \ref{xangle} imply that
 the assumptions of Proposition \ref{e-e6} hold, with $p=e$ and $v=e_8-e$.
Hence
  $$| e_8-e| \le \frac{\ez}2. $$
Moreover, $|H(e_{0,8})-H(e )|<\psi_{2R}(\frac{\ez}2)$ and
  Lemma \ref{xangler} imply that
  the assumptions of Proposition \ref{e-e6} hold with $p=e$ and $v=e_{0,8}-e$. Hence
  $$| e_{0,8}-e | \le \frac{\ez}2.$$
Thus we have $| e_{0,8}-e_8| \le \ez$.

  \medskip
\noindent {\it Case 2. $u$ is linear in some neighborhood of $0$.}
In this case, we have that $e_{0,8}=Du(0)$ and $u(x)= u(0)+ e_{0,8}\cdot x$ for $x$ near $0$. Let $r_0\in(0,8]$ be the largest $r\in(0,8)$ so that
 $u(x)=u(0)+ e_{0,8}\cdot x$ for $x\in B(0,r )$. Hence
 $u(x)=u(0)+e_{0,8}\cdot x$ for $x\in B(0,r_0)$.

If $r_0 >\frac18$, then
by $e_8\in\mathscr D(0;8;\frac{\dz}8)$ we have
$$\sup_{x\in B(0,2)}|e_{0,8}\cdot x-e_8\cdot x|\le \dz. $$
Hence $ |e_{0,8}-e_8  |\le \ez$ whenever $0<\dz<\ez$.

If $r_0< \frac18$,
the definition of $r_0$ implies there exists $x_0\in\partial B(0,r_0)$
such $u(x)\ne u(0)+e_{0,8}\cdot x$ in any neighborhood of $x_0$.
We claim that $u$ is nonlinear in any neighborhood of $x_0$.
For, otherwise, there exist $s>0$ and $p\in\rr^2$
such that $u(x)=u(x_0)+ p\cdot (x-x_0)$ for $x\in B(x_0,s)$.
Then we can see that
$$(e_{0,8}-p)\cdot (x-x_0) =0, \ \forall x\in B(0,r_0)\cap B(x_0,s).$$
 If $e-p\perp x_0$, then we must have that $e_{0,8}=p$, which is impossible.
 If $e-p$ is not perpendicular to $x_0$, then we can find $w\in B(0,r_0)\cap B(x_0,s)$ such that either $\frac{w-x_0}{|w-x_0|}$ or
 $\frac{x_0-w}{|w-x_0|}$
 equals to $\frac{e_{0,8}-p}{|e_{0,8}-p|}$.
Hence we also have $e_{0,8}=p$, which is impossible..

 Next we claim that $\mathscr D(u)(x_0)=\{e_{0,8}\}$, that is, $u$ is differentiable at $x_0$ and $e_{0,8} = Du(x_0)$.
 In fact, by ({\bf B}) of Theorem \ref{lla},  for any
  sequence $r_j\to 0$, there exists a vector $e_{x_0,{\{r_j\}} }$ such that $H( e_{x_0,{\{r_j\}}})=Su(x_0)$ and
$$\lim_{ j\to \fz}\sup_{y\in B(x_0,r_j)}\frac{ |u(y)-u(x_0)-e_{x_0, {\{r_j\}}}
\cdot  (y-x_0)  | }{r_j}=0.$$
Since $u(y)=u(0)+e_{0,8}\cdot y$ for $y\in \overline {B(0,r_0)}$, we have
 $$\lim_{ j\to \fz}\sup_{y\in B(x_0, r_j)\cap B(0,r_0)}\frac{ |(e_{0,8} -e_{x_0,
  {\{r_j\}} })\cdot (y-x_0) | }{r_j}=0.$$
 If $(e_{0,8} -e_{x_0, {\{r_j\}} })\cdot x_0=0$, then we must have that
  $|e_{0,8} -e_{x_0, \{r_j\} }|=0$. \\
 If $(e_{0,8} -e_{x_0, {\{r_j\}} })\cdot x_0\ne 0$,
  then for sufficiently large $j$,  we can find vectors
  $x_j \in \partial B(x_0, r_j)\cap B(0,r_0)$ such that either
  $ \frac{x_j-x_0}{|x_j -x_0 |} $ or $-  \frac{x_j-x_0} {|x_j-x_0 |}$ equals to
  $ \frac{e_{0,8} -e_{x_0, {\{r_j\}}}}{|e_{0,8} -e_{x_0, {\{r_j\}} }|}$.
 Hence $|e_{0,8} -e_{x_0, \{r_j\}}|=0$, that is, $e =e_{x_0, {\{r_j\}} }$.
  We then conclude that $\mathscr D(u)(x_0)=\{e_{0,8}\}$.

Finally, observe that $e_8\in \mathscr D(u)(0;8; \frac{\dz}8)$ implies that
$e_8\in \mathscr D(u)(x_0;7; \frac{2\dz}{7})$.
Define $v(x)=\frac87 [u(\frac 78 x+x_0)-u(x_0)]$ for $x\in B(0,8)$.
Then $v\in AM_H(B(0,8))$, $e_8\in \mathscr D(v)(0;8; \frac{3\dz}{8})$, and $Dv(0)=e_{0,8}$.
If $\dz<\frac13\min\{\dz_1, \dz_2\}$,
then we can argue as in Case 1 to conclude
that $|e_8-e_{0,8}|\le \ez$ as desired.
The proof of Theorem \ref{xprop} is complete, given Lemmas
6.2, 6.3, 6.4, and 6.5.
\end{proof}

\subsection{Proofs of Lemmas \ref{xcon-com}, \ref{xlength}, \ref{xangle}, and \ref{xangler}}

  \begin{proof}
[Proof of Lemma  \ref{xcon-com}.] By Theorem \ref{lla} ({\bf B}), there exists
 $e\in\mathscr D(z_3)$, with $H(e)=Su(z_3)$, such that for some
 sequence $s_k\rightarrow 0$,
\begin{equation}\label{xeq4.1}
\lim_{ k\to \fz}\sup_{B(z_3,s_k)}\frac{ |u(y)-u(z_3)-e\cdot (y-z_3) | }{s_k}=0.
\end{equation}
For any $z_k\in  [z_1 ,z_2 ]\cap \partial B(z_3,s_k)$, by
\eqref{xcase1} and \eqref{xeq4.1}, we have
\begin{align*}
 \left(a_0-e\right) \cdot \left(\frac{z_k-z_3}{s_k}\right)=
\frac{l(z_k)-l(z_3)-e\cdot (z_k-z_3)}{s_k}\le
\frac{u(z_k)-u(z_3)-e\cdot (z_k-z_3)}{s_k}\to 0,
\end{align*}
as $k\to \fz$. Therefore
 $$\left(a_0-e\right) \cdot \left( z -z_3 \right)=0 \quad\forall z\in [z_1,z_2].$$
Applying \eqref{xcase1} again, we then obtain
$$u(z_i)-u(z_3)>l(z_i)-l(z_3)=a_0\cdot (z_i-z_3)=e\cdot
(z_i-z_3),\quad i=1,2,$$
so that  $z_1, z_2\in \big\{y \in \rr^2 : u(y)>u(z_3)+e\cdot(y-z_3 )\big\}.$

Now suppose that $z_1$, $z_2$ were in the same connected component of
$$\big\{y \in \rr^2 : u(y)>u(z_3)+e\cdot(y-z_3 )\big\}\cap B(0,6).$$
Then there would exist a simple curve $\gamma_0\subset \{y \in \rr^2 : u(y)>u(z_3)+e\cdot(y-z_3 )\}
 \cap B(0,6)$ joining $z_1$ to $z_2$. Let $\gz=\gamma_0\cup
 [z_1,z_2]$ be the simple closed curve and $U\subset B(0,6)$ be the open set bounded by $\gz$ so that $\gz=\partial U$.
Without loss of generality, we may assume that
there exists a small $\beta >0$ such that either
 $$B^+(z_3,\beta):=B(z_3,\beta)\cap
 \{y\in\rr^2:0<\measuredangle (y-z_3,z_2-z_1)<\pi\}\subset U.$$
 Let $ \nu\in \rr^2$, with $|\nu|=1$, be  such  that $\nu\cdot(z_2-z_1)=0$ and $z_3+\frac12\bz\nu \in U$. Since there exists an $\dz_0$ such that
 $$u(y)-u(z_3)-e\cdot (y-z_3)\ge \dz_{0}, \ \forall y\in \gamma_0,$$
we can find  a small
$\ez_0>0$  such that
 $$u(y)\ge u(z_3)+(e+\ez_0 v)\cdot (y-z_3),
\, \forall y\in \gz_0.$$
From $\nu\cdot(z_1-z_2)=0$, we also see that
$$u(y)\ge u(z_3)+(e+\ez_0 v)\cdot (y-z_3), \ \forall y\in [z_1,z_2].$$
Thus by Lemma \ref{com} and Corollary \ref{com-linear}
we conclude that
$$u(y)\ge u(z_3)+(e+\ez_0 v)\cdot (y-z_3),\quad y\in U.$$
This implies that
$$\lim_{k\to\fz}\max_{y\in B^+(z_3,\beta)\cap B(z_3,s_k)}
\frac{|u(y)-u(z_3)-e\cdot (y-z_3)|}{s_k }\ge
\frac{ u(z_3+s_k\nu)-u(z_3)-s_ke\cdot  \nu }{s_k }\ge \ez_0  >0,$$
which contradicts to \eqref{xeq4.1}. This completes
the proof of Lemma \ref{xcon-com}.
\end{proof}

In order to prove Lemma \ref{xlength}, set
$$\eta=\eta(\ez):=\frac1{4C_1}\psi_{2R}(\frac{\ez}2),$$
where
$$C_1:=1+\max\big\{|q|, q\in\partial H(p), H(p)\le 4\big\}.$$
Let $f=e_8-e$.
If $|f|\le \eta$, then
$$H(e)\ge  H(e_8)-q\cdot f\ge H(e_8)- C_1 \eta \ge H(e_8)
-\frac14\psi_{2R}(\frac{\ez}2),\quad\forall q\in\partial H(e_8),$$
 which implies \eqref{xeq4.3}.

Below we assume $|f|\ge \eta $.
Set
$$
\mathscr{S}\equiv\{y\in\rr^2: |f\cdot (y-z_3)|\le
2\dz\}, \mathscr{S}_-\equiv\{y\in\rr^2: f\cdot (y-z_3)<-2\dz\}, \ {\rm and}\ \mathscr{S}_+\equiv\{y\in\rr^2: f\cdot (y-z_3)> 2\dz\}.$$
The  width of $ \mathscr{S}$ is $\frac{2\dz}{|f|}\le\frac{2\dz}{\eta}$.
Moreover,  since $e_8\in\mathscr D(u)(0; 8;\frac{\dz}8)$,
we have that
 $$|u(y)-u(z_3)-e_8\cdot (y-z_3)|\le 2\dz,\quad \forall y\in
 B(0,6).$$
Since
$$ \mathscr{S}_- \cap B(0,6)
\subset \big\{y\in\rr^2: u(y)<u(z_3)+e\cdot (y-z_3)\big\} $$
and
$$\mathscr{S}_+ \cap B(0,6)
\subset \big\{y\in\rr^2: u(y)>u(z_3)+e\cdot (y-z_3)\big\},$$
it follows from Lemma \ref{xcon-com} that
there is a connected component $U$ of
$\{y\in\rr^2:u(y)>u(z_3)+e\cdot (y-z_3)\}\cap B(0,6)$, containing either $z_1$
or $z_2$, such that $U\subset\mathscr{S}$ and $U\cap B(0,1)\ne\emptyset$.
Moreover, $U\nsubset B(0,6)$. For, otherwise,   since
$$u(y)=u(z_3)+ e\cdot (y-z_3)\ {\rm{on}}\ \partial U,$$
it follows from Lemma \ref{com-linear}
that
$$u(y)=u(z_3)+ e\cdot (y-z_3)\ {\rm{in}}\ U,$$
which  contradicts to the definition of $U$.

Therefore, there exists a polygonal line $\Gamma\subset U$, which
starts inside $B(0,1)$ and ends outside $B(0,6)$.
There exists $z_4\in B(0,6)$,  with $|z_4-z_3|=3$ and $z_4-z_3\perp f$,
 such that (see Figure 3 below)

\begin{enumerate}
\item[(A1)]  $$\displaystyle\sup_{y\in B(z_4,2)}
\big|u(y)-u(z_3)-e_8\cdot (y-z_3) \big| \le 2\dz, $$
and $1\le H(e_8)\le 2$.

\item[(A2)]   $\big\{y\in\rr^2:u(y)>u(z_3)+e\cdot (y-z_3)\big\}\cap
B(0,6)$ has a connected component  $U\subset  \mathscr{S}$
that contains a polygonal line
$\Gamma$ connecting the two arcs $ \mathscr{S} \cap \partial B(z_4,2)$.
\end{enumerate}

\begin{center}
\begin{tikzpicture}

\fill (0,0) circle (1pt);
\draw (0,0) node [left] {$O$};

\draw (-4.5,0.7) -- (4,0.7) -- cycle;
\draw (-4.5,-0.7) -- (4,-0.7) -- cycle;

\draw   [color=green]  (4,0.32)--(3,0.25)--(2,0.3)--(1,0.36)--(0.5,0.2)--
(0.3,0.15)--(0.2 ,0.1)--(0.1,0.05)--(0.3, -0.1 )--(0.4, -0.3)
--(0.5,-0.250)--(1,-0.2)--(1.5,-0.19)--(2,-0.28)--(3,-0.2)--
(4,-0.17) ;

\fill [color=green,opacity=0.5] (4,0.32)--(3,0.25)--(2,0.3)--(1,0.36)--(0.5,0.2)--
(0.3,0.15)--(0.2 ,0.1)--(0.1,0.05)--(0.3, -0.1 )--(0.4, -0.3)
--(0.5,-0.250)--(1,-0.2)--(1.5,-0.19)--(2,-0.28)--(3,-0.2)--
(4,-0.17);



\draw  [ ]plot[smooth] coordinates
{(-4.5,-0.5) 
(-3,-0.46) (-2,-0.36) (-1,-0.53) (0,-0.60) (1,-0.64) (2,-0.5)
 (3,-0.66) (4,-0.6)}; 



\draw  [ ]plot[smooth] coordinates
{(-4.5,0.66) 
(-3,0.54) (-2,0.35) (-1,0.52) (0,0.60) (1,0.47) (2,0.5)
 (3,0.51) (4,0.56)};

\draw  (0.6,  -0.15)
 --(1,0.08)--(1.5,0.12)--(2,-0.16 )--(3, 0.1)--
(4, 0 ) ; 
\draw (3.4,0.1) node [below] {$\Gamma$};
 \draw (4,0.2) node [right] {$U$};


\draw (2,0.2) circle (1cm);
\fill (2,0.2) circle (1pt);
\draw (2,0.2) node [right] {$z_4$};

\fill (0.5,0.2) circle (1pt);
\draw (0.55,0.3) node [left] {$z_3$};

\fill (0.4,0.0) circle (1pt);
\draw (0.4,0.0) node [below] {$z_1$};

\fill (0.7,0.6) circle (1pt);
\draw (0.7,0.6) node [above] {$z_2$};

\draw  (0.4,0.0)--(0.7,0.6);

\draw  (0.5,0.2)--(2,0.2);

\draw (-4.2,-1.2) node [right]{$\mathscr{S}_-$};
\draw (-4.2,1.2) node [right]{$\mathscr{S}_+$};
\draw (-4.2,0) node [right]{$\mathscr{S}$};

\draw[->,>=latex, thick, color=blue](0,0) --(0,1);
\draw (0,1) node[right]{$f$};
\draw  [dotted, thick, color=gray] (-4.3,-0.7)--(-4.3,0.7)-- cycle;
\draw [->,>=latex](-4.3,-0.6) --(-4.3,-0.7);
\draw [->,>=latex](-4.3,0.6) --(-4.3,0.7);
\draw (-4.4,0) node[left]{$\frac{2\dz}{|f|}$};


\draw [dotted, thick, color=gray] (2,0.2) -- (1.5,1.12)-- cycle;
\draw (1.6,1.2) node[right] {$2$};
%

\draw (0,-1.2) node [below] {${\rm Figure \ 3}$};
\end{tikzpicture}
\end{center}


We now have
\begin{lem}\label{xlengthU} It holds that
 \begin{equation}\label{xeq4.6}
  Su(x)\le H(e)+4C_0\dz, \quad\forall x\in U\cap B(z_4,1).
  \end{equation}
\end{lem}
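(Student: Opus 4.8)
The plan is to prove Lemma~\ref{xlengthU} by combining the local slope bound from Lemma~\ref{LEMest} with the geometry of the set $U$ and the strip $\mathscr S$. First I would recall why $U\cap B(z_4,1)$ is a good place to estimate $Su$: every point $x$ in this set lies in $U\subset\mathscr S$, which is a thin strip of width $\frac{2\dz}{|f|}\le\frac{2\dz}\eta$ around the line $\{y:f\cdot(y-z_3)=0\}$. Since $e_8\in\mathscr D(u)(0;8;\frac\dz8)$ gives $|u(y)-u(z_3)-e_8\cdot(y-z_3)|\le 2\dz$ for $y\in B(0,6)$, and since $x$ and any $y\in B(x,1)\subset B(z_4,2)\subset B(0,6)$ satisfy $|f\cdot(y-x)|\le$ (a small multiple of $\dz$) because both points lie in $\mathscr S$, I can rewrite $e_8\cdot(y-x) = e\cdot(y-x) + f\cdot(y-x)$ and absorb $f\cdot(y-x)$ into an $O(\dz)$ error. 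This yields, for any $x\in U\cap B(z_4,1)$ and $y\in B(x,1)$,
\begin{equation*}
u(y)-u(x)\le e\cdot(y-x)+C\dz\le e\cdot(y-x)+C\dz|x-y|+C\dz,
\end{equation*}
after using $|x-y|\le 1$ on the first error and the strip width on the second. Actually the cleaner route, mirroring Lemma~\ref{LEMest}, is to show directly that $e\in\mathscr D(u)(x;r;C\dz)$ for some fixed radius $r$ (say $r=1$) at points $x\in U\cap B(z_4,1)$, and then quote Lemma~\ref{LEMest}.

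More precisely, the key steps in order are: (1) Fix $x\in U\cap B(z_4,1)$, so $B(x,1)\subset B(z_4,2)\subset B(0,6)$. (2) For $y\in B(x,1)$, estimate $|u(y)-u(x)-e\cdot(y-x)|$: write $u(y)-u(x)-e\cdot(y-x) = [u(y)-u(z_3)-e_8\cdot(y-z_3)] - [u(x)-u(z_3)-e_8\cdot(y-z_3)]\dots$ — more carefully, $u(y)-u(x)-e\cdot(y-x) = [u(y)-u(z_3)-e_8\cdot(y-z_3)] - [u(x)-u(z_3)-e_8\cdot(x-z_3)] + f\cdot(y-x)$, where $f=e_8-e$. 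The first two bracketed terms are each bounded by $2\dz$ by (A1). For the last term, both $x\in\mathscr S$ and $y\in\mathscr S$ would be the ideal situation, but $y$ need not lie in $\mathscr S$; instead I only know $y\in B(0,6)$. So I need to be more careful: I should only claim the one-sided cone comparison on $B(x,r)$ for a small enough $r$, using that $U\subset\mathscr S$ controls the behavior on $\partial U$, together with comparison with cones. (3) Alternatively — and this is probably the intended argument — use that on $\partial U$ one has $u(y)=u(z_3)+e\cdot(y-z_3)$ exactly (definition of the connected component $U$), so by Lemma~\ref{LEM7.11}/comparison with cones inside $U$, $u(y)\le u(z_3)+e\cdot(y-z_3)+C^H_{k}(\cdot)$-type bounds hold; combined with the $2\dz$-linear approximation by $e_8$ available on all of $B(0,6)$ and the strip geometry of $U$, derive that $u(y)-u(x)\le C^H_{H(e)+C\dz}(y-x)$ for $y\in B(x,\frac14)$, then invoke $u\in CC_H$ and Lemma~\ref{LEMest} to conclude $Su(x)\le H(e)+C_0\dz$ (adjusting the constant to $4C_0\dz$ to be safe).

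The main obstacle I anticipate is step (2)/(3): controlling the error term $f\cdot(y-x)$ uniformly for $y$ ranging over a \emph{full} ball $B(x,r)$, not just over the strip. The point is that the lower cone property of $u$ (from $u\in CCB_H$), together with the fact that $u$ agrees with the linear function $u(z_3)+e\cdot(\cdot-z_3)$ on $\partial U$ and $U$ is pinned inside the thin strip $\mathscr S$, forces $u$ to be close to that linear function throughout a neighborhood of $x$ inside $U$; meanwhile the upper bound comes for free from $e_8$-linear approximation on $B(0,6)$ restricted back into the strip. Pinning down the exact radius $r$ on which the one-sided cone inequality $u(y)\le u(x)+C^H_{H(e)+C\dz}(y-x)$ holds, and verifying it uses only that $B(x,r)\subset B(z_4,2)\cap(\text{neighborhood of }U)$, is the delicate bookkeeping; once that inequality is in hand, Lemma~\ref{LEMest} (or a direct application of Lemma~\ref{LEM7.11} plus $u\in CC_H(B(0,8))$) closes the argument immediately, giving \eqref{xeq4.6}.
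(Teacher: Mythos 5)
Your plan assembles the right ingredients (exact linearity of $u$ on $\partial U$, the (A1) bound via $e_8$, the strip containment $U\subset\mathscr S$, Corollary \ref{xew4}, and comparison with cones), and your option (3) is indeed the paper's route; but the proof is never actually carried out, and the one concrete intermediate statement you commit to is the wrong target. You aim for the one-sided inequality $u(y)-u(x)\le C^H_{H(e)+C\dz}(y-x)$ on a \emph{full} ball $B(x,\tfrac14)$ of definite radius and then want to finish via Lemma \ref{LEMest}. Neither half of this works: Lemma \ref{LEMest} requires the two-sided approximation $e\in\mathscr D(u)(x;r;C\dz)$, which (as you yourself observe) is unavailable because of the term $f\cdot(y-x)$; and a one-sided cone bound at level $H(e)+C\dz$ on a ball of fixed radius $\tfrac14$ would force $\|H(Du)\|_{L^\fz(B(x,\frac14))}\le H(e)+C\dz$ by Lemma \ref{LEM7.11}, which cannot be expected at this stage: in the setting where the lemma is used one may have $H(e)\le H(e_8)-\eta$, and points with slope close to $H(e_8)$ (for instance the points $w_q$ produced in Claim I of the proof of Lemma \ref{xlength}) can lie within distance $\tfrac14$ of $U\cap B(z_4,1)$, just across $\partial U$. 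Indeed, points $y\in B(x,\tfrac14)\setminus U$ may belong to other components of $\{u>u(z_3)+e\cdot(\cdot-z_3)\}$, where the only upper bound available is through $e_8$, not $e$; restricting to $U$ is essential, not a technicality to be bookkept away.

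The actual argument needs no ball of definite radius: fix $x_0\in U\cap B(z_4,1)$ and work on the region $V=U\cap B(x_0,1)\subset B(z_4,2)$. On $\partial U\cap B(x_0,1)$ one has $u(y)=u(z_3)+e\cdot(y-z_3)<u(x_0)+e\cdot(y-x_0)\le u(x_0)+C^H_{H(e)}(y-x_0)$, using $x_0\in U$; on $U\cap\partial B(x_0,1)$, (A1) together with $y\in U\subset\mathscr S$ gives $u(y)\le u(x_0)+e\cdot(y-x_0)+4\dz$, and since $|y-x_0|=1$ Corollary \ref{xew4} upgrades this to $u(y)\le u(x_0)+C^H_{H(e)+4C_0\dz}(y-x_0)$. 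Hence the cone bound holds on all of $\partial V$, and comparison with cones (applied on $V\setminus\{x_0\}$) propagates it to $V$. Since $V$ is an open neighborhood of $x_0$, it contains $B(x_0,t)$ for all small $t>0$, so $\widehat S^+_tu(x_0)\le H(e)+4C_0\dz$ for such $t$, and therefore $Su(x_0)\le H(e)+4C_0\dz$ by Lemma \ref{LEMcone} (equivalently, the concluding step in the proof of Lemma \ref{LEMest}). This decomposition of $\partial(U\cap B(x_0,1))$ and the pointwise, small-$t$ conclusion are exactly the steps your proposal defers as ``delicate bookkeeping,'' and they are where the lemma is actually proved.
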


\begin{proof}
[Proof of Lemma \ref{xlengthU}.]

 %
%
%
%
%

 %
%
%

 For any $x_0\in B(z_4,1)\cap U$, we have $B(x_0,1)\subset B(z_4,2)$ and
 $u(z_3)+e\cdot (x_0-z_3)<u(x_0)$. Observe that
\begin{equation}\label{xeq4.7}
u(y)=u(z_3)+
e\cdot (y-z_3)\le u(x_0)+
C^H_{H(e)}(y-x_0),\quad \forall y\in \partial U\cap
B(x_0,1),
\end{equation}
and
\begin{equation}\label{xeq4.8}
u(y)\le u(z_3)+
e\cdot (y-z_3)+2\dz\le u(x_0)+
e\cdot (y-x_0)+4\dz,\quad \forall y\in U\cap
\partial B(x_0,1).
\end{equation}
By \eqref{xew5}, \eqref{xeq4.7}, and \eqref{xeq4.8}, there exists
$C_0>0$ such that
\begin{equation}\label{xeq4.9}
u(y)\le u(x_0)+ C^H_{H(e)+4C_0\dz}(y-x_0),
\ \forall y\in\partial (U\cap B(x_0,1)),
\end{equation}
and hence
\begin{equation}\label{xeq4.9'}
u(y)\le u(x_0)+C^H_{H(e)+4C_0\dz}(y-x_0),
\ \forall y\in U\cap B(x_0,1).
\end{equation}
This, combined with Lemma \ref{LEMest}, implies \eqref{xeq4.6}.
\end{proof}

We now return to the proof of Lemma \ref{xlength}.

\begin{proof}
[Completion of proof of Lemma \ref{xlength}.]  (Also see Figure 4 below for illustration).
We may also assume that $H(e)\le H(e_8)-\eta$.
Let $\dz<\frac{1}{100C_1}$ so that
 $$50| q |\dz\le 50C_1\dz\le \frac12 H(e_8), \ \forall q\in \partial H(e_8).$$
The convexity of $H$ implies
\begin{equation}\label{xeq4.4}
 q \cdot f=  q \cdot (e_8-e )\ge H(e_8)-H(e)\ge \eta,\quad \quad\forall  q \in \partial H(e_8).
\end{equation}

 \medskip
\noindent {\it Claim  I.
For any $ q \in \partial H(e_8)$, there exists
$$w_{ q }\in I:= \Big\{z_4+s\frac{ q }{| q |}: -
\frac 13\le s\le -\frac 14\Big\}\subset B(z_4,1)$$ such that
\begin{equation}\label{xeq4.10}
Su(w_{ q })\ge H(e_8)-50 | q |\dz.
\end{equation}}

\begin{proof}
[Proof of  Claim  I]
Let $ q \in \partial H(e_8)$
and $c:=\sup_{x\in I} Su(x)$.   To find such a   $w_q$,
it suffices to show that
$$H(e_8)-48 | q |\dz \le c,$$
which follows from
\begin{equation}\label{xeq4.14}
e_8\cdot  q -48| q |\dz \le C^H_c( q ).
\end{equation}
Indeed, since $q\in\partial H(e_8)$, we have that
$e_8\cdot  q=H(e_8)+L(q)$, and hence
$$ C^H_c( q )=\sup_{H(p)\le c} p\cdot q\le
\sup_{H(p)\le c} [H(p)+L(q)]=c+L(q)=c+e_8\cdot q-H(e_8).
 $$

To see \eqref{xeq4.14}, observe that, thanks to $I\subset B(z_4,2)$,
(A1) implies that
\begin{equation}\label{xeq4.11}
u\big(z_4-\frac{ q }{4| q |}\big)-
u\big(z_4-\frac{ q }{3| q |}\big)\ge
\frac1 {12} e_8\cdot \frac{ q }{| q |}-4\dz.
\end{equation}
On the other hand,
by the upper semicontinuity of $Su$, for any $\eta>0$
 there exists an open neighborhood  $V_\eta(I)$  such that
 $$\sup_{x\in V_\eta(I)} H(Du(x))=\sup_{x\in V_\eta(I)} Su(x)\le c+\eta.$$
Hence by Lemma \ref{LEM7.11}, we have
$$
u\big(z_4-\frac{ q }{4| q |}\big)-
u\big(z_4-\frac{ q }{3| q |}\big)\le C^H_{c+\eta}(\frac{ q }{12| q |}).
$$
This and \eqref{xeq4.11}, after taking $\eta\to0$, yield \eqref{xeq4.14}.\end{proof}

\begin{center}
\begin{tikzpicture}
\fill (0,0) circle (1pt);
\draw (0,0.1) node [left] {$z_4$};
\draw (0,0) circle (4 cm);
\draw  [dotted, thick, color=gray] (0,0) -- (3,2.64)-- cycle;
\draw (4,2.8) node [left] {$r=2$};
\draw (-4.5,1.7) node [right] {$ \mathscr{S}_+$};
\draw (-4.5,0.8) node [right] {$ \mathscr{S} $};
\draw (-4.5,-1.7) node [right] {$ \mathscr{S}_-$};

\draw (-5,1) -- (5,1) -- cycle;
\draw (-5,-1) -- (5,-1) -- cycle;
\draw[->,>=latex,thick, color=blue](0,0) --(0,2);
\draw (0,2) node[right]{$f=e_8-e$};
\draw  [dotted, thick, color=gray] (-4.7,-1)--(-4.7,1)-- cycle;
\draw [->,>=latex, thick](-4.7,-0.9) --(-4.7,-1);
\draw [->,>=latex,thick](-4.7,0.9) --(-4.7,1);
\draw (-5,0) node[left]{$\frac{2\dz}{|f|}$};

\draw [thick]plot[smooth] coordinates
{(-4.5,0.4) (-4.2,0.34) (-4,0.4)(-3.8,0.3)(-3.5,0.4)(-3,0.35)(-2.5,0.3)
(-2.3,0.4)(-2.2,0.43)(-2,0.44)(-1.5,0.35)(-1,0.3)(-0.5,0.4)
(0,0.43)(0.5,0.3)(1,0.35)(1.5,0.4)(2,0.38)(2.5,0.43)(3,0.44)(3.5,0.39)
(4,0.35)(4.5,0.44)};
\draw [thick]plot[smooth] coordinates
{(-4.5,-0.41) (-4.2,-0.30) (-4,-0.39)(-3.8,-0.4)(-3.5,-0.35)(-3,-0.4)(-2.5,-0.45)
(-2.3,-0.5)(-2.2,-0.51)(-2,-0.3)(-1.5,-0.35)(-1,-0.37)(-0.5,-0.41)
(0,-0.37)(0.5,-0.35)(1,-0.38)(1.5,-0.45)(2,-0.40)(2.5,-0.41)(3,-0.37)(3.5,-0.42)
(4,-0.45)(4.5,-0.36)};


\draw[] (-4.3,0.2)--(-4,0)--(-3.5,0.2)--(-3,0.1)--(-2.5,-0.2)--(-2,0.1)--(-1.5,-0.21)
--(-1,0.1)--(-0.5,0)--(0,-0.2)--(0.5,0.12)--(1,0.2)--(1.5,-0.1)
--(2,0.1)--(2.5,-0.05)--(3,-0.25)--(3.5,-0.24)--(3.99,-0.3)--(4.3,0.1);
\draw (-4,-0.1) node[right]{$\Gamma$};
\draw [->,>=latex,thick](0,0) -- (-1.5,-2);
\draw (-1.5,-2) node[right]{$y_0=w_{q_0}$};

\draw[thick, color=red] (-1.5,-2)--(-1.45,-1.5)--(-1.24,-1.2)--(-1.3,-1)--(-1.1,-0.8)
--(-1.24,-0.5)--(-1.0,0)--(-1.25,0.3)--(-0.9,0.5)--(-1.1,0.6)--(-1.12,0.8)
--(-0.9,1.3)--(-1,1.7)--(-0.85,2.3)--(-0.9,2.7)--(-0.7,3.3)--(-0.9,3.4)
--(-0.6,3.7);
\fill (-0.6,3.7) circle (1pt);
\draw (-0.6,3.7) node[right]{$y_m$};
\draw (3,0.2) node[right]{$U$};
\draw (0,-4.2) node [below] {${\rm Figure \ 4}$};
\end{tikzpicture}
\end{center}

Next  let  $ q^0\in \partial H(e_8)$ be such that
\begin{equation}
\label{EQ4.x1}f\cdot \frac{q^0}{|q^0|}=\min\Big\{f\cdot \frac{q }{|q |}: q\in \partial H(e_8)\Big\}.
\end{equation}
Let
$y_0:=w_{ q^0}=z_4+s_0\frac{q^0}{|q^0|}$, for some
 $s_0\in[-\frac13,-\frac14]$,  be given by {\it Claim I}.
 Then it follows from \eqref{xeq4.4} and $z_4-z_3\perp f$
 that
  $$
 f\cdot (y_0-z_3)=  f\cdot (y_0-z_4)= s_0f\cdot \frac{q^0}{|q^0|}
 <-\frac14\frac\eta {|q^0|}<-2\delta,$$
 provided $\dz<\frac{\eta}{8|q^0|}$.
 Hence $y_0\in\mathscr S_-$.

For any $0<\dz<\frac{\eta}{8|q^0|}$,  let
 $$t=t(\dz):={\rm dist}(\Gamma, \partial U\cap B(z_4,2))\le \frac{2\dz}{|f|}.$$
Applying Lemma \ref{LEMconeincreasing}, we obtain a discrete gradient flow $\{y_i \}_{i=0}^{m}$ for some $m=m(\dz)$, satisfying
\begin{equation}\label{xeq4.18}
y_i=y_i(\dz)\in B(z_4,2) ,\ |y_i-y_{i-1}|=t,\ u(y_i)=u(y_{i-1})+
C^H_{S^+_t( u)(y_{i-1})}(y_i-y_{i-1}),\quad 1\le i\le m,
\end{equation}
but $y_{m+1}\notin B(z_4,2)$, which implies that
\begin{equation}\label{xeq4.19}
{\rm dist}(y_m,\partial B(z_4,2))\le t.
\end{equation}
To see the existence of such a $m\ge 1$, we argue by contradiction.
Assume \eqref{xeq4.18} holds for all $i\ge1$. Then, by Claim I,
\begin{equation}\label{xeq4.20}
S ( u)(y_i)\ge S ( u)(y_0)\ge H(e_8)-50|  q^0|\dz,\ \forall i.
\end{equation}
Thus, for any $i\ge1$
\begin{align}\label{xeq4.20x}
u(y_i)-u(y_0)&=\sum^i_{j=1}\left(u(y_j)-u(y_{j-1})\right)
\ge \sum^i _{j=1} C^H_{S ( u)(y_{j-1})}(y_j-y_{j-1})\\
&\ge \sum^j_{i=1}C^H_{S ( u)(y_0)}(y_j-y_{j-1})
\ge \sum^i_{j=1}C^H_{H(e_8)-50| q^0|\dz}(y_j-y_{j-1}).\nonumber
\end{align}
Note that by $50| q^0|\dz\le \frac12 H(e_8)$,
 there exists $C_2(H)>0$    such that
$$C^H_{H(e_8)-50| q^0|\dz}(x)\ge C^H_{\frac12   }(x)\ge
C_2|x|, \quad \forall |x|=1,$$
hence
\begin{align*}
u(y_i)-u(y_0)
&\ge \sum^i_{j=1}C^H_{H(e_8)-50| q^0|\dz}(y_j-y_{j-1})
\ge Cit.\nonumber
\end{align*}
This, combined with $|u(y_i)-u(y_0)|\le \|Du\|_{L^\fz(B(0,6))}
|y_i-y_0|$, implies that
 $|y_i-y_0|\ge Cit $ holds for all $i\ge 1$, which is impossible.

 \bigskip

\noindent {\it Claim  II. There exists  $0<\dz(H,e_8,\ez)<\eta/8$ such that for any $0<\dz<\dz(H,e_8,
\ez)$, we can find $1\le j_\dz\le m-1$ such that  $y_{j_\dz}\in B(z_4,1)\cap U$
and $y_m\in \mathscr S_+$.}

\begin{proof}
[Proof of Claim II]
We first show that if $\dz>0$ is sufficiently small, then $y_m\in \mathscr S_+$.
To see this, observe that
\begin{equation}\label{xeq4.23}
C^H_{H(e_8)-50| q^0|\dz}(y_m-y_0)\le e_8\cdot (y_m-y_0)+4\dz.
\end{equation}
In fact, applying \eqref{xeq4.20x} with $i=m$ and
the triangle inequality for $C^H_k$, we have that
\begin{align}\label{xeq4.21}
u(y_m)-u(y_0)&\ge C^H_{H(e_8)-50| q^0|\dz}(y_m-y_0).
\end{align}
While, by (A1) and $y_m,y_0\in B_2(z_4)$ we have
\begin{equation}\label{xeq4.22}
u(y_m)-u(y_0)\le e_8\cdot (y_m-y_0)+4\dz.
\end{equation}
\eqref{xeq4.23} follows from \eqref {xeq4.22} and \eqref{xeq4.21}.

From $\dist(y_m,\partial B(z_4,2))\le t$, there exists
$y_*\in \partial B(z_4,2)$ such that $t\ge |y_m-y_*|$.
For $0<\dz<\frac{\eta}8$, by $|f|\ge \frac{\eta}2$
and  $t<\frac{2\dz}{|f|}\le  \frac23$, it holds that
$$ |y_m-z_4|\ge |y_*-z_4|-|y_*-y_m|\ge 2-t\ge   \frac 43. $$
This, combined with $y_0=w_{q^0}\in \{z_4+t\frac{ q^0}{| q^0|}: t\in[-\frac 13,-\frac 14]\}$, implies that
$$|y_m-y_0|\ge |y_m-z_4|-|z_4-y_0|\ge \frac 43-\frac 13=1.$$

Denote $e_m=\frac{y_m-y_0}{|y_m-y_0|}$. Then by
\eqref{xeq4.23}   we have
\begin{equation}\label{xeq4.24}
C^H_{H(e_8)-50| q |\dz}(e_m)\le e_8\cdot e_m+4\dz.
\end{equation}
{Let $\tau(  e_8,4,\eta/16C_1|q^0|)>0$ be given by Lemma \ref{xxlem} and
$0<\delta<\tau( e_8,4,\eta/3|q^0|)$. Then Lemma \ref{xxlem} implies
that there exists $\hat q\in \partial H(e_8)$ such that
$$|e_m-\frac{\hat q} {|\hat q|}|\le \frac{\eta }{16C_1|q^0|}.$$}

 Next we show that if  $\dz>0$  is sufficiently small, then
$y_m\in \mathscr S_+=\{y\in\rr^2:f\cdot (y-z_3)\ge 2\dz\}$.
Indeed,  since  $y_0 =z_4+s_0\frac{q^0}{|q^0|}$ for some
 $s_0\in[-\frac13,-\frac14]$,  we have
$$
f\cdot (y_m-z_4)= f\cdot (y_m-y_0)+f\cdot (y_0-z_4)= |y_m-y_0|
 f\cdot \frac{\hat q}{|\hat q|}  + |y_m-y_0| f\cdot (e_m-\frac{\hat q}{|\hat q|})+s_{ 0} f\cdot \frac{q^0}{|q^0|}.$$
{Since $1\le |y_m-y_0|\le 4$,
$f\cdot \frac{\hat q}{|\hat q|}\ge f\cdot \frac{q^0}{|q^0|}\ge \frac{\eta}{|q^0|}$ and $|f|\le 2C_1$, we get
$$
f\cdot (y_m-z_4)\ge   \frac23
 f\cdot \frac{q^0}{|q^0|} -4|f||e_m-\frac{\hat q}{|\hat q|} |\ge \frac{2\eta}{3|q^0|}-8C_1 \frac{\eta }{16C_1|q^0|} \ge \frac{ \eta}{6|q^0|}\ge 2\dz,$$
if we choose $\dz<\frac{ \eta}{12|q^0|}$.
Since $f\cdot(y_m-z_3)= f\cdot(y_m-z_4)$, we conclude
that $y_m \in \mathscr S_+$.}

For a sufficiently small $\dz>0$,
it follows from $y_0\in \mathscr S_-$, $y_m\in \mathscr S_+$,
and the choice of the step size $t$ that
there exists $1\le j_\dz\le m-1$ such that $y_{j_\dz}\in U\cap B(z_4,2)$.
It remains to show that $|y_{j_\dz}-z_4|\le 1$.
For, otherwise, we have $|y_{j_\dz}-z_4|> 1$ so that
$$|y_ {j_\dz}-y_0|\ge |y_{j_\dz}-z_4|-|y_0-z_4|\ge 1-\frac{1}{3}=\frac 23.$$
Then, by an argument similar to the above, we
can show that  $ f\cdot(y_{j_\dz}-z_4)>2\dz$, that is,
$y_{j_\dz}\in \mathscr S_+$, which is a contradiction.
This proves  {\it Claim  II}.
\end{proof}

It follows from \eqref{xeq4.6}, \eqref{xeq4.10},
\eqref{xeq4.20}, and {\it Claim  II} that
$$H(e_8)-50| q |\dz\le S  u(y_{j_\dz})\le H(e)+C\dz,$$
this implies that \eqref{xeq4.3}, where $\dz=\dz(\eta,H)$ is chosen to be sufficiently small. The proof of Lemma \ref{xlength} is now complete.
\end{proof}

Next we will give a proof of Lemma \ref{xangle}.

\begin{proof}
[Proof of Lemma \ref{xangle}] (Also see Figure 5 below for illustration).
If there exist $e'\in B(e,\eta)$, with $H(e)=H(e')$,
and $q_1,q_2 \in \partial H(e')$ such that
 $$\measuredangle (q_1 ,f)\le \frac{\pi}2\le \measuredangle (q_2 ,f),$$
then we can find $\lz\in[0,1]$ such that
$\measuredangle(\lz q_1+(1-\lz)q_2,f)=\frac{\pi}2$,
which satisfies \eqref{xeq4.28}.

Hence, without loss of generality we may assume
\begin{equation}\label{x1}
\measuredangle (q ,f)\in[0,\frac{\pi}{2}),
\forall q \in \partial H(e') \ {\rm{for\ all}}\ e'\in B(e,\eta)\ {\rm{with}}\ H(e)=H(e').
\end{equation}
Indeed, Lemma \ref{xangle} can be similarly proved for the case that $\measuredangle (q ,f)\in(\frac{\pi}2, {\pi} ]$
for all $q \in \partial H(e')$ and all $e'\in B(e,\eta)$ with $H(e)=H(e')$.

 Note that  there exists $e'\in \overline {B(e,\eta)}$, with $H(e)=H(e')$,
 and $ q_{e'}\in \partial H(e')$ such that
 $$\alpha:=\measuredangle (q_{e'} ,f)=\max_{e'\in {\overline B(e,\eta)}}\max_{q\in\partial H(e')}\measuredangle (q ,f)\le \frac{\pi}{2} .$$
Suppose that the conclusion of Lemma \ref{xangle}
were false. Then we would have $\alpha< \frac{\pi}{2} -  \eta$.
Let $x_\dz=z_4-\frac{2q_{e'}}{q_{e'}\cdot f}\dz$ be the intersection point
between $L:=\{z_4+sq_{e'} :t\in\rr\}$ and $\{y\in\rr^2:(y-z_4)\cdot f=-2\dz\}$.
Observe  that
$$|x_\dz-z_4|=\frac{2\dz}{|f|\cos \measuredangle (q_{e'} ,f)}\le
\frac{2\dz}{\eta \sin\eta}\le1, $$
provided $\dz >0$ is chosen to be sufficiently small.
This implies $B(x_\dz,1)\subset B(z_4,2)$. By (A1), we have
\begin{equation}\label{xeq4.29}
u(y)-u(z_3)-e\cdot (y-z_3)\le u(y)
-u(z_3)-e_8\cdot (y-z_3)+f\cdot (y-z_3)\le 4\dz,
\quad \forall y\in U\cap B(x_\dz,1).
\end{equation}
On the other hand, we have
\begin{eqnarray}\label{xeq4.30}
u(y)&=&u(z_3)+e\cdot (y-z_3)\nonumber\\
&=&u(z_3)+e\cdot (x_\dz-z_3)+e\cdot (y-x_\dz)\nonumber\\
&\le& u(z_3)+e\cdot (x_\dz-z_3)+C^H_{H(e)}(y-x_\dz),
\quad \forall y\in \partial U\cap B(x_\dz,1).
\end{eqnarray}

\begin{center}
\begin{tikzpicture}
\fill (0,0) circle (1pt);
\draw (0,0.1) node [left] {$z_4$};

\draw (-4.5,1.6) node [right] {$ \mathscr{S}_+$};
\draw (-4.5,0.8) node [right] {$ \mathscr{S} $};
\draw (-5,-1.8) node [right] {$ \mathscr{S}_-$};

\draw (-5,1) -- (3.5,1) -- cycle;
\draw (-5,-1) -- (3.5,-1) -- cycle;
\draw  [dotted, thick, color=gray] (-4.7,-1)--(-4.7,1)-- cycle;
\draw [->,>=latex](-4.7,-0.9) --(-4.7,-1);
\draw [->,>=latex](-4.7,0.9) --(-4.7,1);
\draw (-5,0) node[left]{$\frac{2\dz}{|f|}$};
\draw[thick] plot[smooth] coordinates
{(-4.5,0.4) (-4.2,0.34) (-4,0.4)(-3.8,0.3)(-3.5,0.4)(-3,0.35)(-2.5,0.3)
(-2.3,0.4)(-2.2,0.43)(-2,0.44)(-1.5,0.35)(-1,0.3)(-0.5,0.4)
(0,0.43)(0.5,0.3)(1,0.35)(1.5,0.4)(2,0.38)(2.5,0.43)(3,0.44)};

\draw [thick]plot[smooth] coordinates
{(-4.5,-0.41) (-4.2,-0.30) (-4,-0.39)(-3.8,-0.4)(-3.5,-0.35)(-3,-0.4)(-2.5,-0.45)
(-2.3,-0.5)(-2.2,-0.51)(-2,-0.3)(-1.5,-0.35)(-1,-0.37)(-0.5,-0.41)
(0,-0.37)(0.5,-0.35)(1,-0.38)(1.5,-0.45)(2,-0.40)(2.5,-0.41)(3,-0.37)};
\fill (-0.7,-1) circle (1pt);
\draw (-0.7,-1.1) node [below] {$x_\dz$};
\draw[->,>=latex, thick, color=blue](-0.7,-1) --(-0.7,2);
\draw (-0.7,2) node[right]{$f=e_8-e$};
\draw[->,>=latex,thick,color=blue](-0.7,-1) --(1.6,0.8);
\draw (1.6,0.8) node[left]{$q_{e'}$};

\draw  [thick] (-0.7,-1) --(2.4,0.7);
\draw  [thick] (-0.7,-1)--+(0:0.6) arc (0:29.2:0.6cm)--cycle;
\draw (-0.15,-0.8) node[right]{$ \eta$};

\draw [thick,color=blue] (-0.7,-1)--+(38.8:0.4) arc (38.8 :90:0.4cm)--cycle;
\draw (-0.7,-0.55) node[right]{$\alpha$};

\draw [] (-0.7,-1)--+(0:3.5) arc (0 :180:3.5cm)--cycle;

\draw (-3,0) node[right]{$U$};
\fill (2.49,0.44) circle (1pt);
\draw [->,>=latex,thick, color=red](-0.7,-1)--(2.62,0.17);
\draw (2.62,0.18) node [right]{$y$};

\draw (2.5,-1) node [below]{$1$};
\draw (-0.7,-1.7) node [below] {${\rm Figure \ 5}$};
\end{tikzpicture}
\end{center}

From \eqref{xeq4.29}, we have that
for any  $y\in U\cap \partial B(x_\dz,1)$,
\begin{equation}\label{xeq4.33}
u(y)\le u(z_3)+e\cdot (x_\dz-z_3)+e\cdot (y-x_\dz)+4\dz.
\end{equation}
Let $p_y\in H^{-1}(H(e)) $ be such that
$$C_{H(e)}^H(y-x_\dz)=p_y\cdot (y-x_\dz).$$
Then by Lemma \ref{la-mu}, there exists $t_y>0$
such that $t_y( y-x_\dz) \in\partial H(p_y)$.
{Since $\measuredangle (y-x_\dz, f)>\frac\pi2-\frac{C\dz}{|f|}$
for some constant $C\ge 2$ and $|f|\ge\eta$,
we know that if $\dz<\frac{ \eta^2}{C}$, then
$\measuredangle (y-x_\dz, f)>\frac\pi2-\eta $.}
Since we assume $\alpha<\frac{\pi}2-\eta$, we conclude
that $|p_y-e|\ge \eta$.
If  we further choose $\dz<\frac14\phi_{2R}(\eta)$,
then by Proposition \ref{LEM4.5}, we have that
$$  (p_y-e)\cdot  (y-x_\dz)\ge 4\dz.$$
Thus
$$C_{H(e)}^H(y-x_\dz)=p_y\cdot (y-x_\dz)\ge e\cdot (y-x_\dz)+4\dz.$$
This and  \eqref{xeq4.33} yield that
    \begin{equation}\label{xeq4.34}
u(y)\le u(z_3)+e\cdot (x_\dz-z_3)+C_{H(e)}^H(y-x_\dz),
\ \forall y\in U\cap \partial B(x_\delta, 1).
\end{equation}

  It follows from \eqref{xeq4.30} and \eqref{xeq4.34}, and Lemma
  \ref{LEM7.13} that for sufficiently small $\dz>0$, we have
 \begin{equation}\label{eq4.37}
 u(y)\le u(z_3)+e\cdot (x_\dz-z_3)+ C^H_{H(e)}(y-x_\dz),
 \ \forall y\in U\cap B(x_\dz,1).
   \end{equation}
From (A2), we see that for any $q_e\in\partial H(e)$,
$$\{x_\dz+t q_e:t\ge 0\}\cap (U\cap B(x_\dz,1))\neq \emptyset.$$
Hence there exists $t_0>0$ such
that $x_\dz+t_0 q_e\in U\cap B(x_\dz,1)$. Hence
by \eqref{eq4.37} and Lemma \ref{la-mu} we have that
 \begin{align*}
 u(x_\delta+t_0 q_e)&\le u(z_3)+e\cdot (x_\dz-z_3)+C^H_{H(e)}(t_0q_e)\\
 &= u(z_3)+e\cdot (x_\dz-z_3)+t_0 e\cdot q_e\\
 &=u(z_3)+e\cdot (x_\delta+t_0q_e-z_3),
 \end{align*}
 which is impossible, since
 $x_\delta+t_0q_e\in U$ implies
 that $u(x_\dz+t_0q_e)>u(z_3)+e\cdot (y_0-z_3)$.
Therefore we must have $\alpha>\frac{\pi}2-\eta$,
which completes the proof of  Lemma \ref{xangle}.
\end{proof}

Finally, we can give a proof of Lemma \ref{xangler}.

\begin{proof}[Proofs of Lemma  \ref{xangler}.]

Define $v_r(x)=\frac{u(rx)}{r}$ for $x\in B(0,8) $. Then
$
e_{0,8}\in \mathscr D(v_r)(0,8,\dz)$.  Note that
for $\dz<\dz_0$, it holds  $\frac12\le H(e_{0,8})\le 4$.
Since Lemma \ref{xcon-com} implies that
$\{y\in\rr^2:u(y)>u(z_3)+e\cdot (y-z_3)\}\cap B(0,6r)$ has two
connected components that intersect $B(0,r)$,
we see that
the set
$$\{y\in\rr^2:v_r(y)>v_r(z^r_3)+e\cdot (y-z^r_3)\}\cap B(0,6)$$
has two connected components that intersect $B(0,1)$,
here $z^r_3=\frac{z_3}{r}$. Now Lemma  \ref{xangler}
can be proven by applying that of Lemma  \ref{xangle},
with $u$ replaced by $v_r$. We omit the detail.
\end{proof}


\section{Proofs  of Theorem \ref{c2} and Theorem \ref{lla0}:
({\bf A}) $\Rightarrow$ ({\bf C})  and  ({\bf A}) $\Rightarrow$  ({\bf D})}

In this section, we will utilize Theorem \ref{xprop} to complete the
proof of Theorem \ref{lla0}. We begin with

 \begin{thm}\label{c2} For $n=2$, assume that $H$ satisfies both
 \eqref{assh0}   and ({\bf A}) of Theorem \ref{lla}.

  \begin{enumerate}
  \item[(i)] For any domain $\Omega\subset\rn$,
   if $u\in AM_H\left(\Omega\right)$  then $u\in C^1\left(\Omega\right)$.

 \item[(ii)] For any $k>0$, there exists a continuous, monotone increasing
function $\rho_k$,   with $\rho_k\left(0\right)=0$, such that for
any $z\in\rr^2$ and $r>0$, if $v\in AM_H(B\left(x,2r\right)$
satisfies $\|H(Du)\|_{L^\fz(B(x,2r))}\le k$, then
  \begin{equation}\label{rho}\sup_{ x,y\in B\left(z,s\right)}
  |Dv(x)-Dv(y)|\le \rho_k\big(\frac{s}{r} \big). \ \forall s<r.
  \end{equation}
  \end{enumerate}
    \end{thm}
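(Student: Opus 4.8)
The plan is to deduce both statements from the quantitative linear approximation result of Theorem \ref{xprop} (Theorem 6.1), together with the linear approximation property ({\bf B}) of Theorem \ref{lla} and the upper semicontinuity of $Su$. The key point is that Theorem \ref{xprop} is a \emph{one-scale} improvement estimate: if $u$ is well-approximated by the affine map with slope $e_8$ on $B(0,8)$ at scale $\delta/8$ (with $e_8\in H^{-1}([1,2])$), then \emph{every} vector in $\mathscr Du(0)$ is within $\epsilon$ of $e_8$, provided $\delta<\delta_\ast(H,\epsilon,e_8)$. First I would record the scaling/translation invariance of the class $AM_H$ and of the quantity $Su$: if $u\in AM_H(B(x_0,\rho))$ then $w(y):=\frac{8}{\rho}\big(u(x_0+\frac{\rho}{8}y)-u(x_0)\big)\in AM_H(B(0,8))$, with $Sw(0)=Su(x_0)$ and $\mathscr Dw(0)=\mathscr Du(x_0)$; this lets one apply Theorem \ref{xprop} at any point and any small ball.

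For part (i), fix $x_0\in\Omega$. By ({\bf B}) of Theorem \ref{lla}, $\mathscr Du(x_0)\neq\emptyset$ and $H(e)=Su(x_0)$ for every $e\in\mathscr Du(x_0)$. The only case requiring work is $k:=Su(x_0)>0$: if $Su(x_0)=0$ then $H(Du)\equiv 0$ near $x_0$ by comparison with cones (Lemma \ref{LEM7.11}), which forces $Du\equiv 0$ there and differentiability is immediate. So assume $k>0$; after the linear normalization of Section 1.1 and a rescaling of $H$ (replacing $H$ by $\tilde H(p)=(H(\lambda p))^{?}$ appropriately, or more simply by renormalizing so that the relevant slope lies in $[1,2]$ — one uses that the cone constants scale controllably) we may assume $Su(x_0)\in[1,2]$, i.e. any $e\in\mathscr Du(x_0)$ satisfies $e\in H^{-1}([1,2])$. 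Pick any such $e_0$. Given $\epsilon>0$, let $\delta_\ast=\delta_\ast(H,\epsilon,e_0)$ from Theorem \ref{xprop}; since $e_0\in\mathscr Du(x_0)$ there is, by definition of $\mathscr D(u)(x_0;\cdot;\cdot)$, a radius $r=r(x_0,\delta)>0$ with $e_0\in\mathscr D(u)(x_0;8r;\frac{\delta}{8})$ for $\delta<\delta_\ast$. Rescaling to $B(0,8)$ and applying Theorem \ref{xprop} yields $\max_{e\in\mathscr Du(x_0)}|e_0-e|\le\epsilon$. Since $\epsilon>0$ was arbitrary, $\mathscr Du(x_0)=\{e_0\}$ is a singleton, hence $u$ is differentiable at $x_0$ with $Du(x_0)=e_0$ and $H(Du(x_0))=Su(x_0)$. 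This gives differentiability everywhere in $\Omega$. Continuity of $Du$ then follows from the continuity (upper semicontinuity plus the linear approximation structure) of $x\mapsto Su(x)$ combined with the uniform-in-$x$ nature of the estimate: if $x_j\to x_0$ and $Du(x_j)\to p$ along a subsequence, one shows $p\in\mathscr Du(x_0)=\{Du(x_0)\}$, using that $u(x_j+r y)-u(x_j)\approx r\,Du(x_j)\cdot y$ and passing to the limit; alternatively, feed $e_8=Du(x_0)$ into Theorem \ref{xprop} on balls centered at nearby points $x_j$, which are again good linear approximations once $|x_j-x_0|$ is small, to conclude $|Du(x_j)-Du(x_0)|\le\epsilon$.

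For part (ii) I would extract an explicit modulus from the same mechanism. Fix $k>0$. Given a normalized absolute minimizer $v\in AM_H(B(z,2r))$ with $\|H(Dv)\|_{L^\infty(B(z,2r))}\le k$, first bound $|Dv|$ uniformly (by $k$ and $H$, via Lemma \ref{LEM7.11}/Corollary \ref{llg}-type reasoning), so all slopes live in a fixed compact set $K_k\subset\rn$. The construction is the standard iteration: one shows that at every point $x\in B(z,r)$ and every scale $s\le r$ there is a vector $e_{x,s}$ with $v\approx$ affine on $B(x,s)$ at some scale $\sigma(s)$; Theorem \ref{xprop} (uniform over $e_8$ in the compact set $H^{-1}([1,2])$ — here one needs the dependence of $\delta_\ast$ on $e_8$ to be controlled uniformly on compacta, which holds by the stated Propositions \ref{LEM4.5}, \ref{e-e6} since the functions $\phi_R,\psi_R$ are uniform for $H(p)\le R$) gives $|e_{x,2s}-e_{x,s}|\le\epsilon(s/r)$ with $\epsilon(t)\to 0$ as $t\to 0$; summing a geometric series of such increments yields that $e_{x,s}$ converges to $Dv(x)$ with a quantitative rate, and the triangle inequality across two nearby points $x,y\in B(z,s)$ produces $|Dv(x)-Dv(y)|\le\rho_k(s/r)$ for a continuous increasing $\rho_k$ with $\rho_k(0)=0$ depending only on $k$ (through $K_k$, the cone constants $C_R$ of Corollary \ref{xew4}, and the moduli $\phi_R,\psi_R$).

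\textbf{Main obstacle.} The delicate point is the normalization in part (i): Theorem \ref{xprop} requires the approximating slope $e_8$ to lie in $H^{-1}([1,2])$, whereas $Su(x_0)$ is an arbitrary positive number, and rescaling $u$ does not rescale $H$. I would handle this by rescaling the Hamiltonian — replacing $H(p)$ by $H_\lambda(p):=H(\lambda p)$ (which still satisfies \eqref{assh0} and ({\bf A})) and $u(x)$ by $u(\lambda x)/\lambda$, noting $AM_H(\Omega)\leftrightarrow AM_{H_\lambda}(\lambda^{-1}\Omega)$ and choosing $\lambda$ so that the normalized slope falls in $[1,2]$ — and then checking that all constants ($\delta_\ast$, $C_R$, $\phi_R$, $\psi_R$) depend on $H_\lambda$ only through $k$ and a compact range of $\lambda$, hence can be absorbed into $\rho_k$. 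The second, more technical obstacle is the uniformity of $\delta_\ast(H,\epsilon,e_8)$ as $e_8$ ranges over the compact set $H^{-1}([1,2])$, needed to run the Campanato-type iteration in part (ii); this should follow from inspecting the proof of Theorem \ref{xprop}, where every auxiliary modulus ($\psi_{2R}$, $\phi_{2R}$, $\tau(p,R,\epsilon)$ — the last via the Remark after Lemma \ref{xxlem} requires care since $\tau$ genuinely depends on $p$, not just $H(p)$, but here $p$ ranges over a compact set so uniformity is still available) is uniform once the base point is confined to a compact set.
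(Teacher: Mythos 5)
Your part (i) is close to the paper's route (the paper packages the reduction to Theorem \ref{xprop} into a separate Lemma \ref{xprop1}), but the normalization you commit to in the ``main obstacle'' paragraph does not work. Replacing $H(p)$ by $H_\lambda(p)=H(\lambda p)$ and $u$ by $u(\lambda\cdot)/\lambda$ is not the right correspondence: if $w=\lambda^{-1}u(\lambda\cdot)$ then $Dw(x)=Du(\lambda x)$ and $H_\lambda(Dw)=H(\lambda Du(\lambda x))$, so $w\notin AM_{H_\lambda}$ in general; with the correctly scaled map $w=\lambda^{-2}u(\lambda\cdot)$ one gets $H_\lambda(Dw(x))=H(Du(\lambda x))$, i.e.\ the $H$-value attached to the slope is \emph{invariant} under argument rescaling and can never be pushed into $[1,2]$ this way. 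The normalization that works, and that the paper uses, is multiplicative in the value: replace $H$ by $\widetilde H=H/H(e)$, noting $AM_H=AM_{cH}$ for any $c>0$ and that \eqref{assh0} and ({\bf A}) persist, so that $\widetilde H(e)=1$. Also, your treatment of the case $Su(x_0)=0$ is wrong as stated: $Su(x_0)=0$ does \emph{not} imply $H(Du)\equiv 0$ in a neighborhood (Aronsson-type examples have vanishing slope at a point without being locally constant). What is true and suffices is that every $e\in\mathscr Du(x_0)$ satisfies $H(e)=Su(x_0)=0$, hence $e=0$ by Corollary \ref{propH}, so $\mathscr Du(x_0)=\{0\}$ and differentiability at $x_0$ follows directly from ({\bf B}).

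The genuine gap is in part (ii). Your Campanato-type iteration needs $\delta_\ast(H,\ez,e_8)$ to be uniform as $e_8$ ranges over the compact set $H^{-1}([1,2])$, and you assert this uniformity is ``still available'' because the base point is confined to a compact set. It is not: the constant $\tau(p,R,\ez)$ of Lemma \ref{xxlem}, which enters the proof of Theorem \ref{xprop} through the choice of $\dz$ in Claim II of Lemma \ref{xlength}, genuinely degenerates along sequences $p_\theta\to p_0$ inside the compact level set $H^{-1}(1)$ -- this is exactly what the Remark following Lemma \ref{xxlem} demonstrates (the subdifferential is only upper semicontinuous in $p$, so compactness of the parameter set gives no uniform lower bound). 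This is why the paper states Theorem \ref{xprop} with $\delta_\ast$ depending on $e_8$, in contrast to the $e_8$-independent versions of Savin and Wang--Yu which require smoother, uniformly convex $H$. Moreover, even granting uniformity, all moduli here ($\psi_R$, $\phi_R$, $\tau$, $\dz_\ast$) are purely qualitative, produced by compactness arguments, so your dyadic increments $\ez(s/r)$ carry no summable rate and the ``geometric series'' step does not close; and Theorem \ref{xprop} compares the coarse-scale slope with the \emph{infinitesimal} slopes only -- it provides no excess-improvement at a definite smaller scale on which an iteration could run. The paper avoids all of this by proving (ii) by contradiction: after rescaling to $B(0,1)$, a violating sequence $u_j$ converges uniformly (Arzel\`a--Ascoli) to some $u_\fz\in AM_H$, which is differentiable at $0$ by part (i); then Lemma \ref{xprop1} is applied with the \emph{single fixed} vector $e=Du_\fz(0)$ at the two points $0$ and $y_j$, so only one value of $\delta_\ast$ is ever needed, and no uniformity in $e_8$ or quantitative rate is required. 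You should replace your iteration by this compactness argument.
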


Assume Theorem \ref{c2} for the moment,
we can prove   ({\bf A}) $\Rightarrow$ ({\bf C}) and ({\bf D})  in Theorem \ref{lla0} as below.

 \begin{proof}[Proofs of  ({\bf A})$\Rightarrow$({\bf C})  and   ({\bf D}) in Theorem \ref{lla0}.]
 As pointed in the introduction, it suffices to prove Theorem \ref{lla0} under the assumption  \eqref{assh0}.
({\bf A})$\Rightarrow$({\bf C}) follows directly from Theorem \ref{c2}.
To show ({\bf A})$\Rightarrow$({\bf D}),    let $u\in AM_H(\rr^2)$
satisfies a linear growth at the infinity. Then by Corollary \ref{llg} we have
that  $k:=\|H(Du)\|_{L^\fz(\rr^2)}<\fz$. By Theorem \ref{c2}, $u\in C^1(\rr^2)$.
For any $R >0$, define $u_R(x)=u(Rx)/R$ for all $x\in\rn$.
 Then $u_R\in AM_H(\rr^2)\cap C^1(\rr^2)$ and $\|H(Du_R)\|_{L^\fz(\rr^2)}=k$.
Applying Theorem \ref{c2} again, we see that
 $$ |Du(x)-Du(0)|=  \limsup_{R\to\fz}\Big|Du_R\big(\frac xR \big)-Du(0)\Big|\le \limsup_{R\to\fz}\rho_k\big(\frac {|x|}R \big)=0,$$
 holds for all $x\in\rn .$ This implies that $Du \equiv Du(0)$ and hence $u(x)=u(0)+Du(0)\cdot x$ for all $x\in\rn$.
\end{proof}

Before we prove Theorem \ref{c2}, we need
\begin{lem}\label{xprop1} For $n=2$, assume
$H$ satisfies \eqref{assh0}  and ({\bf A}) of Theorem \ref{lla}.
For each  $\ez >0$ and each vector  $e\in\rr^2$,
there exist  $ \dz_\ast(H,\ez,e  ) >0$  such that for any
$0<\dz<\dz_\ast$, $r>0$, $x\in\rr^2$
 and $u \in AM_H(B(x ,r))$, we have
 \begin{equation}\label{xlin3}
 \max_{e'\in \mathscr D u(x)}| e  -e' |\le \ez\quad
 \mbox{whenever
  $e  \in \mathscr D(u )(x;r;\dz)$.}
\end{equation}
\end{lem}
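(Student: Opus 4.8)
The plan is to reduce Lemma~\ref{xprop1} to Theorem~\ref{xprop} by a scaling and normalization argument. Given $\ez>0$ and $e\in\rr^2$, first observe that we may assume $e\ne 0$ and, by relabeling, that $H(e)>0$: indeed if $e=0$ then $e\in\mathscr D(u)(x;r;\dz)$ together with Lemma~\ref{LEMest} forces $Su\equiv 0$ near $x$, hence $u$ is locally affine with zero gradient, and the claim is trivial. Fix a scale factor $\lambda=\lambda(e)>0$ so that $H(\lambda e)\in[1,2]$; such a $\lambda$ exists because $H$ is continuous, $H(0)=0$, and $H$ is superlinear, so $t\mapsto H(te)$ runs continuously from $0$ to $\fz$. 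Set $e_8:=\lambda e$, which now lies in $H^{-1}([1,2])$, the hypothesis space of Theorem~\ref{xprop}.

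The second step is the translation/dilation. For $x\in\rr^2$, $r>0$, and $u\in AM_H(B(x,r))$ with $e\in\mathscr D(u)(x;r;\dz)$, define
$$
w(y):=\frac{8\lambda}{r}\,\Big(u\big(x+\tfrac{r}{8}y\big)-u(x)\Big),\qquad y\in B(0,8).
$$
Then $w\in AM_H(B(0,8))$, since absolute minimizers are preserved under translation, dilation, additive constants, and multiplication by a positive constant (the last because $\mathcal F_H$ scales accordingly and linear functions remain absolute minimizers by Corollary~\ref{com-linear}). A direct computation converting the definition \eqref{delta-lap} of $\mathscr D(u)(x;r;\dz)$ into \eqref{delta-lap} for $w$ shows that $e\in\mathscr D(u)(x;r;\dz)$ is equivalent to
$$
\sup_{B(0,8)}\big|w(y)-w(0)-e_8\cdot y\big|\le 8\lambda\,\dz,
$$
i.e. $e_8\in\mathscr D(w)(0;8;\frac{\dz'}8)$ with $\dz'=64\lambda\dz$. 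Likewise, $e'\in\mathscr D(u)(x)$ corresponds under the same change of variables to $\lambda e'\in\mathscr D(w)(0)$, because the blow-up limits at $0$ of $w$ are precisely $\lambda$ times the blow-up limits of $u$ at $x$.

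Now apply Theorem~\ref{xprop} to $w$ with the target accuracy $\ez':=\lambda\ez$ and the vector $e_8$: there exists $\dz_\ast(H,\ez',e_8)>0$ so that for $0<\dz'<\dz_\ast$, $e_8\in\mathscr D(w)(0;8;\frac{\dz'}8)$ implies $\max_{\hat e\in\mathscr D w(0)}|e_8-\hat e|\le\ez'$. Translating back: whenever $0<\dz<\dz_\ast/(64\lambda)$ and $e\in\mathscr D(u)(x;r;\dz)$, every $e'\in\mathscr D(u)(x)$ satisfies $|\lambda e-\lambda e'|=|e_8-\lambda e'|\le\lambda\ez$, hence $|e-e'|\le\ez$. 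Setting $\dz_\ast(H,\ez,e):=\dz_\ast(H,\lambda\ez,\lambda e)/(64\lambda)$, which depends only on $H$, $\ez$, and $e$ (through $\lambda=\lambda(e)$), gives exactly \eqref{xlin3}. I do not expect any serious obstacle here; the only points requiring care are the exact bookkeeping of the scaling constants in passing between \eqref{delta-lap} for $u$ and for $w$, and verifying that $\mathscr D(\cdot)(0)$ transforms linearly under the blow-up — both of which are routine once one writes out the definitions. The essential content has already been extracted into Theorem~\ref{xprop}; Lemma~\ref{xprop1} is its scale-invariant restatement.
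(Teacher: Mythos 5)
Your main reduction has a genuine gap: the claim that $w(y)=\frac{8\lambda}{r}\bigl(u(x+\frac r8 y)-u(x)\bigr)$ lies in $AM_H(B(0,8))$. Translations, additive constants, and the \emph{gradient-preserving} dilation do preserve $AM_H$, but multiplication of the function by a positive constant $\lambda\neq1$ does not, for a general convex $H$ satisfying only \eqref{assh0} and ({\bf A}). Indeed $Dw=\lambda\,Du$, so $w$ is an absolute minimizer for the Hamiltonian $p\mapsto H(p/\lambda)$, not for $H$: the sublevel sets of $H(\cdot/\lambda)$ are in general not sublevel sets of $H$ (take e.g.\ $H(p)=p_1^2+p_2^4$), so $\esup H(Dv)$ and $\esup H(\lambda Dv)$ do not have the same absolute minimizers. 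Your parenthetical justification via $\mathcal F_H$ ``scaling accordingly'' only works when $H$ is homogeneous (e.g.\ a norm), which is precisely not assumed here. Consequently you are not entitled to apply Theorem \ref{xprop} to $w$ with the Hamiltonian $H$, and the whole normalization $e_8=\lambda e\in H^{-1}([1,2])$ collapses. The paper avoids this by normalizing the \emph{Hamiltonian} instead of the function: it sets $\wz H=\frac1{H(e)}H$, observes that $AM_{\wz H}=AM_H$ (multiplying $H$ by a positive constant is an invariance of the class, and \eqref{assh0} and ({\bf A}) persist), uses the gradient-preserving rescaling $v(y)=\frac 8r\,u\bigl(x+\frac r8 y\bigr)$ so that $e$ itself satisfies $\wz H(e)=1\in[1,2]$ and $e\in\mathscr D(v)(0;8;\dz)$, and then applies Theorem \ref{xprop} with $\wz H$; the resulting $\dz_\ast$ depends on $\wz H$, $\ez$, $e$, i.e.\ on $H,\ez,e$. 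Your argument could be repaired in the same spirit by invoking Theorem \ref{xprop} for the Hamiltonian $H(\cdot/\lambda)$ rather than $H$, but as written the step ``$w\in AM_H$'' is false.

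A secondary problem is the case $e=0$. Lemma \ref{LEMest} requires $\frac14\le H(e)\le4$, so it cannot be invoked for $e=0$; and $0\in\mathscr D(u)(x;r;\dz)$ with $\dz>0$ certainly does not force $Su\equiv0$ near $x$ nor that $u$ is locally affine. What is true, and what the paper uses, is that $|u(y)-u(x)|\le\dz|y-x|\le C^H_{C_0\dz}(y-x)$ on $\partial B(x,r)$, hence by comparison with cones $Su(x)\le C_0\dz$, so every $e'\in\mathscr Du(x)$ has $H(e')=Su(x)\le C_0\dz$ and therefore $|e'|\le\ez$ once $\dz$ is small (using that the sublevel sets of $H$ shrink to $\{0\}$, by ({\bf A}) and coercivity). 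The conclusion in this case is indeed easy, but your stated justification is not correct.
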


\begin{proof}  Given any $e  \in \mathscr D(u )(x;r;\dz )$,
if $e=0$, then  Lemma \ref{xxlem} implies that there exists $C_0>0$ such that
$$|u (y)-u(x)|\le r \dz   = \dz|x-y|  \le C^H_{C_0\dz}(y-x)\quad\forall y\in\partial B(x,r ).$$
Hence by $u\in CC_H(B(0,r))$,  we have that
$$|u (y)-u(x)|\le C^H_{C_0\dz}(y-x), \forall y\in B(x,r).$$
Therefore $Su(x)\le \le C_0\dz$, and for any $e'\in \mathscr D u(x)$
$H(e')= Su(x)\le C_0\dz$ and hence
$|e'|\le  C \dz$, where  $C>0$ depends only on $H$.
In particular, it holds that
$$\max_{\{H(e')=H(e)\}}|e'|\le  \ez,$$
if we choose $\delta>0$ such that $C\dz\le \ez$. This yields \eqref{xlin3}.

If $e\ne0$, let $\wz H(p)= \frac1{H(e)}H(p)$.
Define $v_{x, r}(y)=8r u(x+ \frac{y}{8r})$ for $y\in B(0,8)$.
Then $\wz H(e)=1$, $v_r\in AM_H(B(0,8))$,
and $e\in \mathscr D v_r (0;8;\dz)$.
If $8\dz<\dz_\ast (\wz H,e,\ez)$, then by Theorem \ref{xprop}
we have that
 $$\max_{e'\in \mathscr D v_r(0)}| e  -e' |\le \ez.$$
This, combined with the fact that $\mathscr D v_r(0)=\mathscr D u(x)$, implies
\eqref{xlin3} as desired.
\end{proof}

Now we apply Theorem \ref{xprop}, Theorem \ref{lla}, and Lemma \ref{xprop1}
to give a proof of  Theorem \ref{c2}.
\begin{proof}[Proof of Theorem \ref{c2}.]
   We first prove the everywhere differentiability of  $u$  and $H(Du)=Su$ everywhere. For any $x_0\in\Omega$, it suffices to show that $\mathscr Du(x_0)$ is a singleton. For this, let $e\in  \mathscr Du(x_0)$. Then
  there exists a sequence $r_j\to 0$  such that
  \begin{equation}\label{c21}
 \lim_{ j\to\fz}\sup_{y\in B( 0,1)}\Big|\frac1{r_j}\big(u(r_jy+x_0) -u(x_0)\big)-e\cdot y \Big|=0.
 \end{equation}
Thus for each $\dz>0$, there exists $j_\dz $ such that
$e\in\mathscr Du(x_0;r_{j_\dz};\dz )$.
For any $\ez>0$, let $\dz_\ast(H,\ez,e)$ be given by Lemma \ref{xprop1}.
Then, by applying Lemma \ref{xprop1} to $\dz<\dz_\ast$ and $j\ge j_\dz$, we obtain
$$|e-e'|\le \ez,\quad\forall\ e'\in\mathscr Du(x_0).$$
Sending $\ez\to0 $, we conclude that
$e=e'$ for all $e'\in\mathscr Du(x_0)$.
This implies that $\mathscr Du(x_0)$ is a set of single point.

 Next we prove \eqref{rho} by a contradiction argument.
 Suppose that \eqref{rho} were false. By simple translation and dilation,
 Then there would exist
 $k_0>0,\ez_0 > 0$, a sequence $r_j\to 0$, a sequence ${u_j}\subset AM_H(B(0,1))$ with $u_j(0)=0$, and $y_i\in B(0,r_j)$ such that
 \begin{equation}\label{c24}
\|H(Du_j)\|_{L^\fz(B(0,1))}\le k_0,\quad |Du_j(y_j)-Du_j(0)|\ge \ez_0.
\end{equation}
It is readily seen that $\| Du_j \|_{L^\fz(B(0,1))}\le C(H,k)$. Hence
we may assume that $u_\fz\in AM_H(B(0,2))$, with $u_\fz(0)=0$,
such that after passing to a subsequence, $u_j\to u_\fz$ uniformly in $B(0,1)$.
By (i), $u$ is differentiable in $B(0,1)$ so that for any $\dz> 0$, there exists
$r_0 >0$  such that we have
\begin{equation}\label{c25}
\sup_{x\in B(0,r_0)}|u_\fz(x) -Du_\fz(0)\cdot x | \le  {\dz r_0}.
\end{equation}
Therefore there exists a sufficiently large $j_\dz>0$ such that
\begin{equation}\label{c26}
\sup_{x\in B(0,r_0)}|u_j(x) -Du_\fz(0)\cdot x | \le 2\dz r_0,\quad \forall j\ge j_\dz,
\end{equation}
 and hence
\begin{equation}\label{c28}
\sup_{x\in B(0,r_0/2)}|u_j(x+y_j)-u_j(y_j)-Du_\fz(0)\cdot x | \le 4\dz r_0, \quad \forall j\ge j_\dz.
\end{equation}
This implies that $Du_\fz(0)\in \mathscr Du_j(0,r_0,\dz)\cap \mathscr Du_j(y_j,\frac{r_0}{2},8\dz)$.
Let $\dz_\ast(H,\frac{\ez_0}{4},Du_\fz(0))>0$ be given by \ref{xprop1}.
Then, by applying Lemma \ref{xprop1} to $\dz<\frac16 \dz_\ast$ and $j\ge j_\dz$, we obtain that
$$
|Du_\fz(0)-Du_j(y_j)|\le \frac{\ez_0}{4},
\ \ \  \mbox{and}\ \ \ |Du_\fz(0)-Du_j(0)|\le \frac{\ez_0}{4}.
$$
This implies
$$|Du_j(y_j)-Du_j(0)|\le  \frac{\ez_0}{2}, \ \forall j\ge j_\dz,$$
which contradicts to \eqref{c24}. Hence the proof of  Theorem \ref{c2}
is complete.
\end{proof}


\medskip
 \noindent {\bf Acknowledgment}. The authors
 would like to thank  Professor Yifeng Yu for several valuable discussions
 of this paper. Wang is partially supported by NSF DMS 1764417.
 Zhou is partially supported by National Natural Science Foundation of China (No. 11522102, 11871088).

%
%
%

%
%
%

\end{document}